\newcommand{\rn}{{\mathbb{R}^n}}
\newcommand{\phii}{\varphi}
\newcommand{\e}{\varepsilon}
\newcommand{\ele}{L^2(\Omega)}
\newcommand{\Fu}{\mathcal{F}}
\newcommand{\pp}{\overline{\partial}}
\newcommand{\rl}{\leftidx{^{RL}}\partial_t^\alpha}
\newcommand{\ppa}{\leftidx{^C}\partial_t^\alpha}
\newcommand{\w}{\omega}
\newcommand{\W}{\Omega}
\newcommand{\eps}{\varepsilon}
\newcommand{\dt}{\partial_{\tau}}
\newcommand{\dtd}{\overline{\partial_{\tau}}}
\newcommand{\hsT}{\widetilde{H}^s (\W)}
\newcommand{\hmsT}{H^{-s} (\W)}
\newcommand{\wtilde}{\tilde{\w}}
\newcommand{\fmonio}{g}
\newcommand{\V}{\mathbb{V}^q}
\newcommand\restr[2]{{
  \left.\kern-\nulldelimiterspace 
#1 
  \vphantom{\big|} 
  \right|_{#2} 
  }}
\newcommand{\subf}[2]{%
  {\small\begin{tabular}[t]{@{}c@{}}
  #1\\#2
  \end{tabular}}%
}
\def\R{{\mathbb {R}}}
\def\N{{\mathbb {N}}}
\newtheorem{theorem}{Theorem}[section]
\newtheorem{lemma}[theorem]{Lemma}
\theoremstyle{remark}
\newtheorem{remark}[theorem]{Remark}
\theoremstyle{definition}
\numberwithin{equation}{section}
\title[Numerical approximations for a fully fractional
Allen-Cahn equation]{Numerical approximations for a fully fractional Allen-Cahn equation}
\author[G. Acosta and F. M. Bersetche ]{Gabriel Acosta and Francisco M. Bersetche}
\address[G. Acosta and F. M. Bersetche ]{IMAS - CONICET and Departamento de Matem\'a\-tica, FCEyN - Universidad de Buenos Aires, Ciudad Universitaria, Pabell\'on I  (1428) Buenos Aires, Argentina.}
\email[G. Acosta]{gacosta@dm.uba.ar}
\urladdr[G. Acosta]{http://mate.dm.uba.ar/~gacosta/}
\email[F. M. Bersetche]{fbersetche@dm.uba.ar}
\subjclass[2010]{65R20, 65M60, 35R11}
\keywords{Fractional Laplacian, Caputo Derivative, Semilinear Evolution Problems}
\begin{document}

\begin{abstract}

A finite element scheme for an entirely fractional Allen-Cahn equation with non-smooth initial data is introduced and analyzed. In the proposed nonlocal model, the Caputo fractional in-time derivative and the fractional Laplacian replace the standard local operators. Piecewise linear finite elements and convolution quadratures are the basic 
tools involved in the presented numerical method.  Error analysis and implementation issues  are addressed together with the needed results of regularity for the continuous model.  Also, the asymptotic behavior of solutions, for a vanishing fractional parameter and usual derivative in time, is discussed within the framework of the $\Gamma$-convergence theory.

\end{abstract}

\maketitle

\section{Introduction}
Several physical and social phenomena  have shown to be efficiently described by means of  nonlocal models. From anomalous diffusion to peridynamics and from image processing to finance,  a rich collection of applications pervades 
the recent scientific literature, showing the relevance and versatility of this kind of models. In particular, many classical problems have been extended from the local to the nonlocal context in order to capture behaviors that are beyond the modeling capabilities of differential operators.  
A fact that, in turn, have nurtured the interest in their mathematical foundations as well the corresponding development of numerical methods. 
Basic examples of this kind of models can be elaborated by considering  the fractional Laplace operator,
\begin{equation}
	(-\Delta)^s u (x) = C(n,s) \mbox{ P.V.} \int_\rn \frac{u(x)-u(y)}{|x-y|^{n+2s}} \, dy,
	\label{eq:fraccionario}
\end{equation}
where $0<s<1$ and $ C(n,s) = \frac{2^{2s} s \Gamma(s+\frac{n}{2})}{\pi^{n/2} \Gamma(1-s)} $ is a normalization constant. 
From the probabilistic point of view, $(-\Delta)^s $ corresponds with the infinitesimal generator of a stable L\'evy process and can be shown, by means of the Fourier transform \cite{Hitchhikers}, that  the standard laplacian and the identity operator can be recovered  when $s\to 1^{-}$ or $s\to 0^{+}$, respectively.  Thanks to this, the fractional laplacian can be used to model a wide range of anomalous diffusion processes, where  particles are allowed to perform arbitrarily long jumps. 

On the other hand, even though \eqref{eq:fraccionario} makes perfect sense  for $n=1$,  \emph{lateral} versions are necessary for dealing with the time variable. In such a case, the so-called   Caputo and Riemman-Liouville derivatives of order $0<\alpha\le 1$, given respectively  by  
\begin{equation} \label{eq:caputo}
	\ppa u (x,t) =
	\left\lbrace
	\begin{array}{rl}
		\frac{1}{\Gamma(1-\alpha)} \int_0^t \frac{1}{(t-r)^{\alpha}} \frac{\partial u}{\partial r} (x,r) \, dr & \mbox{ if } 0 < \alpha < 1, \\
		\frac{\partial u}{\partial t} u(x,t) &  \mbox{ if } \alpha = 1,
	\end{array}
	\right.
\end{equation}
and   
\begin{equation*} \label{eq:RL}
	\rl u (x,t) =
	\left\lbrace
	\begin{array}{rl}
		\frac{1}{\Gamma(1-\alpha)}  \frac{\partial  }{\partial t} \int_0^t \frac{1}{(t-r)^{\alpha}} u (x,r) \, dr & \mbox{ if } 0 < \alpha < 1, \\
		\frac{\partial u}{\partial t} u(x,t) &  \mbox{ if } \alpha = 1,
	\end{array}
	\right.
\end{equation*}
are widely used in applications. 

Our aim, in this paper, is to extended the classical Allen-Cahn equation
\begin{equation}
	\partial_t u -  \eps^2\Delta u   = f(u)  \text{ in } \Omega \times (0,T), 
	\label{eq:AC_classic}
\end{equation}
where $f(u) = u - u^3$ and $\W \subset \rn$ is a domain with smooth enough boundary, to the nonlocal setting.  Originally introduced to model the motion of phase boundaries in crystalline solids \cite{AC_79},   the unknown function $u$ represents the density of the components, describing full concentration of one of them where $u = 1$ (or $-1$). Remarkably, the original formulation of the phase-field models \cite{CH_58} contemplates nonlocal interactions, and have been subsequently simplified and approximated by local models. 

In this way, we focus on the following problem, 
\begin{equation}
	\left\lbrace
	\begin{array}{rl}      
		\ppa u +  \eps^2(-\Delta)^s u  & = f(u)  \text{ in } \Omega \times (0,T), \\
		u(0)  & = v  \text{ in } \Omega, \\
		u & =  0  \text{ in }  \Omega^c\times[0,T],
	\end{array}
	\right.
	\label{eq:AC}
\end{equation}
where $v$ belongs to a suitable fractional Sobolev space. Our model \eqref{eq:AC} is based on the Caputo's version, due to its compliance with standard formulations based on initial conditions \cite{Diethelm}.

Several numerical techniques have been recently developed for space and time non-local versions of equation \eqref{eq:AC_classic}, most of them based on finite differences or spectral methods  \cite{akagi,HPH,HTY,LWY,hongwang,SXKE}. Also, numerical methods have been studied for nonlocal versions of related phase separation models, like the Cahn-Hilliard equation  \cite{ains,ains2}. In  \cite{gal17}, a rigorous analysis of a general form of problem \eqref{eq:AC} is presented, providing existence and regularity results for a large class of operators, including the fractional Laplacian \eqref{eq:fraccionario} considered here. A similar analysis is also presented in \cite{tesisneto}.

The article has been organized in the following way. In  Section \ref{sec:AC_eq}, a theoretical treatment of a modified version of problem \eqref{eq:AC} with non-smooth initial datum, including existence, uniqueness and regularity results, is presented.  We focus on certain specific kind of regularity results that are  tailored to suit the analysis of our numerical method. In Section \ref{sec:ns} the numerical scheme, based on  Finite Elements for the spatial discretization and convolution quadrature rules for the time variable is presented and the error estimation is treated in Section \ref{sec:error} . Some arguments, developed in  Section \ref{sec:analisisAC}, show that our analysis can be extended to the model problem   \eqref{eq:AC}. A complementary result, inspired by the behavior of solutions obtained in our numerical simulations, is given in Section \ref{sec:asymptotic} where we study the asymptotic behavior of solutions of \eqref{eq:AC}, with $\alpha = 1$, when the fractional parameter $s \to 0^+$. Strange behaviors, as the displacement of the equilibrium states, are analyzed by means of the Gamma-convergence theory applied to the associated non-local Ginzburg-Landau energy functional. Finally, numerical experiments are shown in Section \ref{sec:numerical}.                  

\section{ A Fractional Semilinear Equation} \label{sec:AC_eq}
In order to study \eqref{eq:AC}, we temporally focus first on a ''restricted'' problem, 

\begin{equation}
	\left\lbrace
	\begin{array}{rl}      
		\ppa u + \eps^2 (-\Delta)^su  & = \fmonio(u)  \text{ in } \Omega \times (0,T), \\
		u(0)  & = v  \text{ in } \Omega, \\
		u & =  0  \text{ in }  \Omega^c\times[0,T],
	\end{array}
	\right.
	\label{eq:AC_trunc}
\end{equation}
where $g$ verifies the following conditions (H1) and (H2),  

\begin{equation} \text{(H1)} \qquad \fmonio \in C^2(\mathbb{R}),
	\label{H1}
\end{equation}

\begin{equation} \text{(H2)} \qquad  |\fmonio|,|\fmonio'|,|\fmonio''|<B \qquad \text{for some} \, B>0.
	\label{H2} 
\end{equation}

Clearly, the function $f$ of problem \eqref{eq:AC} does not comply with (H2). The goal is to apply later our results  to a problem  of the form \eqref{eq:AC_trunc} with a source term that, in addition to $(H1)$ and $(H2)$, agrees with $f$ in some interval  $[-1 - R , 1 + R]$, for an arbitrary $R>0$. In this case,  the condition $\|v\|_{L^{\infty}(\W)} \leq 1$ implies $\|u\|_{L^{\infty}(\W)} \leq 1$ which in turn allows to remove (H2) in this context. Therefore, for such initial condition, \eqref{eq:AC_trunc} and \eqref{eq:AC} are equivalents. This  $L^\infty$ bound is obtained indirectly  through the analysis of the semi-discrete in time scheme deferred to Section \ref{sec:bounds}.

\subsection{Weak formulation}
For any $s \in(0,1)$, we consider an open set $\W \subset \rn$. We  define the fractional Sobolev space $H^s(\W)$ as
\[
H^s(\W) = \left\{ v \in L^2(\W) \colon |v|_{H^s(\W)} := \left( \iint_{\W^2}  \frac{|v(x)-v(y)|^2}{|x-y|^{n+2s}} \, dx \, dy \right)^{\frac12} < \infty \right\}.
\]
This set, together with the norm $\|\cdot\|_{H^s(\W)} = \|\cdot\|_{L^2(\W)} + |\cdot|_{H^s(\W)} ,$ becomes a Hilbert space.

Another important space of interest for the problem under consideration is that of functions in $H^s(\rn)$ supported inside $\overline \W$, 
\[
\widetilde{H}^s (\W) = \left\{ v \in H^s(\rn) \colon \text{ supp } v \subset \bar{\W} \right\}.
\]	
The bilinear form 
\begin{equation}
	\label{eq:pinte}
	\langle u , v \rangle_{H^s(\rn)} := C(n,s) \iint_{(\rn \times \rn) \setminus (\W^c \times \W^c)} \frac{(u(x)-u(y)) (v(x)-v(y))}{|x-y|^{n+2s}} \, dy dx 
\end{equation}
constitutes an inner product on $\widetilde{H}^s(\W)$. The norm induced by the bilinear form, which is just a multiple of the $H^s(\rn)$-seminorm, is equivalent to the full $H^s(\rn)$-norm on this space, due to the fact that a Poincar\'e-type inequality holds in it. See, for example, \cite{AcostaBorthagaray} for details.

We call $u$ a \emph{weak solution} of \eqref{eq:AC_trunc}, if 
$u \in W^{1,1}((0,T),\ele)\cap C((0,T],\hsT)$ and  

\begin{equation}
	\label{eq:weakAC}
	\left\lbrace
	\begin{array}{rl}      
		(\ppa u,\phii) + \eps^2\langle u , \phii \rangle_{H^s(\rn)}  & = (\fmonio(u),\phii) \quad \forall \phii \in \hsT, \\
		u(0)  & = v  \text{ in } \Omega, 
	\end{array}
	\right.
\end{equation}
\emph{almost everywhere} in $(0,T)$. 

Let us note that, defining the operator $A: \hsT \subset \hmsT \to \hmsT$,

\begin{equation}
	\label{eq:weakOperator}
	(Au,\phii) = \langle u , \phii \rangle_{H^s(\rn)} \quad \forall \phii \in \hsT,
\end{equation}
the first identity of \eqref{eq:weakAC} can be rewritten as 

\begin{equation}
	\ppa u + \eps^2Au = \fmonio(u),
	\label{eq:weakAC2}
\end{equation}
a.e. in $(0,T)$.

\subsection{Solution representation}
For the  fractional eigenvalue problem, 
\begin{equation}
	\left\lbrace
	\begin{array}{rl}      
		(-\Delta)^s u  & = \lambda u \text{ in } \Omega \\
		u & =  0  \text{ in }    \mathbb{R}^n \setminus \Omega.
	\end{array}
	\right.
	\label{eq:eigenvalues}
\end{equation}
it is well-known that there exists a family of eigenpairs  $\{(\phi_k, \lambda_k) \}_{k=1}^\infty$, such that  
$$0 < \lambda_1 < \lambda_2 \le \ldots , \quad \lambda_k \to \infty \mbox{ as } k\to \infty ,
$$
with the eigenfunctions's set $\{ \phi_k \}_{k = 1}^\infty$ constituting an orthonormal basis of $L^2(\W)$. 
\begin{remark}
	Unlike eigenfunctions of the classical laplacian, solutions of \eqref{eq:eigenvalues} are in general non-smooth. Indeed, considering a smooth function  $d$ that behaves like $\delta(x)=\text{dist}(x,\partial \W)$ near to $\pp\W$, all eigenfunctions $\phi_k$ belong to the space $d^sC^{2s(-\eps)}(\W)$ (the $\eps$ is active only if $s=1/2$) and  $\frac{\phi_k}{d^s}$ does not vanish near $\partial\W$ \cite{grubb_autovalores, RosOtonSerra2}. Moreover,  
	the best Sobolev regularity guaranteed for solutions of \eqref{eq:eigenvalues} is $\phi_k \in H^{s + 1/2 - \eps}(\rn)$ for $\eps>0$ (see \cite{BdPM}).
\end{remark}

With $\{(\phi_k, \lambda_k) \}_{k=1}^\infty$, solutions of  \eqref{eq:eigenvalues}, and the Mittag-Leffler functions  $E_{\alpha,\mu} \colon \mathbb{C} \to \mathbb{C}$,
given by 
\begin{equation}
	\label{eq:Mittag-Leffler}
	E_{\alpha,\mu}(z) := \sum_{k=0}^\infty \frac{z^k}{\Gamma(\alpha k + \mu)},
\end{equation}
for each $\alpha > 0$ and $\mu \in \R$, we define the operators   

\begin{equation}
	\label{eq:operador_1}
	E^{\alpha}(t)v := \sum_{k} E_{\alpha,1}(-\lambda_k t^{\alpha}) \phi_k (v,\phi_k)_{\ele},
\end{equation}
and

\begin{equation}
	\label{eq:operador_2}
	F^{\alpha}(t)v := \sum_{k} t^{\alpha-1}E_{\alpha,\alpha}(-\lambda_k t^{\alpha}) \phi_k (v,\phi_k)_{\ele},
\end{equation}
for every $v \in \ele$. Following the theory for the linear case (see for instance \cite{ABB2}), the solution of \eqref{eq:AC_trunc} should, at least \emph{formally}, satisfy the integral equation 

\begin{equation}
	u(t) = E^{\alpha}(\eps^2 t)v + \int^{t}_0 F^{\alpha}(\eps^2(t-s) )\fmonio(u(s))\,ds.
	\label{eq:duhamelCaputo}
\end{equation}
We say that $u$ is a \emph{mild solution} of problem \eqref{eq:AC}, if $u$ is a solution of equation \eqref{eq:duhamelCaputo} and in this case, we use the notation $u(t) =: M(v,\fmonio)$.  

For technical purposes we define the interpolated norm

\begin{equation}
	\label{eq:norma_tita}
	\|w\|_{\theta,s} := \Big( \sum_{k} \lambda^{\theta}_k (w,\phi_k)^2_{\ele} \Big)^{\frac{1}{2}}. 
\end{equation} 
It can be easily verified that $\|w\|_{0,s} = \|w\|_{\ele}$, $\|w\|_{1,s} = |w|_{H^s(\rn)}$, and $\|w\|_{2,s} = \|(-\Delta)^{s} w\|_{\ele}$. Additionally, we denote $\dot{H}^{\theta}(\W) \subset \hmsT$, $\theta \geq -1$, the space induced by the norm \eqref{eq:norma_tita}.  

The following two lemmas provide helpful estimates for the operators \eqref{eq:operador_1} and \eqref{eq:operador_2}.

\begin{lemma}
	\label{estimacionSemigrupo}
	Consider $t>0$, then we have
	
	\begin{align}
		\|E^{\alpha}(t)v\|_{p,s}  \leq Ct^{-\alpha(p-q)/2}\|v\|_{q,s}, & \quad \text{if } 0 \leq p - q \leq 2, \label{eq:semigEst1} \\
		\|F^{\alpha}(t)v\|_{p,s}  \leq Ct^{-1 + \alpha(1 + (q-p)/2)}\|v\|_{q,s}, & \quad \text{if } 0 \leq p - q \leq 4, \label{eq:semigEst2}
	\end{align}

\end{lemma}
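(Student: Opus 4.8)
The plan is to diagonalize both operators in the orthonormal basis $\{\phi_k\}$ and thereby reduce the two operator estimates to pointwise decay bounds for the scalar Mittag-Leffler functions $E_{\alpha,1}$ and $E_{\alpha,\alpha}$. Writing $v_k := (v,\phi_k)_{\ele}$, the definitions \eqref{eq:operador_1}, \eqref{eq:operador_2} and \eqref{eq:norma_tita} give
\begin{equation*}
\|E^{\alpha}(t)v\|_{p,s}^2 = \sum_{k} \lambda_k^{p}\, |E_{\alpha,1}(-\lambda_k t^{\alpha})|^2\, v_k^2, \qquad \|F^{\alpha}(t)v\|_{p,s}^2 = \sum_{k} \lambda_k^{p}\, t^{2(\alpha-1)}\, |E_{\alpha,\alpha}(-\lambda_k t^{\alpha})|^2\, v_k^2,
\end{equation*}
while $\|v\|_{q,s}^2 = \sum_k \lambda_k^{q} v_k^2$. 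Hence it suffices to bound each Fourier multiplier by the appropriate power of $\lambda_k t^{\alpha}$: setting $\gamma := (p-q)/2$, everything will follow from the uniform (in $k$) estimates $|E_{\alpha,1}(-x)| \leq C\, x^{-\gamma}$ for $0\le\gamma\le1$ and $|E_{\alpha,\alpha}(-x)| \leq C\, x^{-\gamma}$ for $0\le\gamma\le2$, valid for $x>0$ and evaluated at $x=\lambda_k t^{\alpha}$.

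First I would establish the two master decay bounds
\begin{equation*}
|E_{\alpha,1}(-x)| \leq \frac{C}{1+x}, \qquad |E_{\alpha,\alpha}(-x)| \leq \frac{C}{1+x^2}, \qquad x\ge0,
\end{equation*}
for $0<\alpha<1$. Both are consequences of the classical asymptotic expansion of the Mittag-Leffler function along the negative real axis, together with its boundedness and continuity near the origin. The essential observation, which is exactly what permits the wider range $0\le p-q\le 4$ in the $F^{\alpha}$ estimate, is that for the parameter value $\mu=\alpha$ the leading term of that expansion carries the factor $1/\Gamma(\mu-\alpha)=1/\Gamma(0)=0$ and therefore vanishes; the first surviving term is of order $x^{-2}$, yielding quadratic rather than merely linear decay. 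For $E_{\alpha,1}$ the first term $x^{-1}/\Gamma(1-\alpha)$ survives, so only $x^{-1}$ decay is available, which is what caps the $E^{\alpha}$ estimate at $p-q\le2$.

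Next I would pass from the master bounds to the powers $x^{-\gamma}$ by splitting the analysis into $x\le1$ and $x\ge1$: for $x\le1$ one has $x^{-\gamma}\ge1$ and the multipliers are bounded, while for $x\ge1$ one has $(1+x)^{-1}\le x^{-1}\le x^{-\gamma}$ when $\gamma\le1$ and $(1+x^2)^{-1}\le x^{-2}\le x^{-\gamma}$ when $\gamma\le2$. Substituting $x=\lambda_k t^{\alpha}$ and inserting into the diagonal sums gives, term by term,
\begin{equation*}
\lambda_k^{p}\,|E_{\alpha,1}(-\lambda_k t^{\alpha})|^2 \le C^2\, t^{-\alpha(p-q)}\, \lambda_k^{q}, \qquad \lambda_k^{p}\, t^{2(\alpha-1)}\,|E_{\alpha,\alpha}(-\lambda_k t^{\alpha})|^2 \le C^2\, t^{2(\alpha-1)-\alpha(p-q)}\, \lambda_k^{q},
\end{equation*}
and summing against $v_k^2$ followed by taking square roots produces exactly \eqref{eq:semigEst1} and \eqref{eq:semigEst2}, after checking the identity $2(\alpha-1)-\alpha(p-q)=2\big(-1+\alpha(1+(q-p)/2)\big)$. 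The main obstacle is the first step: securing the two master decay bounds with constants uniform in $x$, and in particular justifying the improved $1/(1+x^2)$ decay for $E_{\alpha,\alpha}$. Once these scalar inequalities are in hand, the remainder is merely bookkeeping of exponents together with orthogonality of the $\phi_k$.
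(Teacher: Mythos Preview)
Your proposal is correct and follows exactly the standard route that the paper defers to by citing \cite[Lemma 2.2]{BLPZ} and \cite[Lemma 2.0.2]{tesis_yo}: diagonalize in the eigenbasis $\{\phi_k\}$, invoke the scalar Mittag--Leffler bounds $|E_{\alpha,1}(-x)|\le C/(1+x)$ and $|E_{\alpha,\alpha}(-x)|\le C/(1+x^2)$ on the negative real axis, and interpolate to the required power $x^{-\gamma}$. The paper does not give an independent argument, so there is nothing further to compare.
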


\begin{proof} The proof is analogous of that of  \cite[Lemma 2.2]{BLPZ} (see also \cite[Lemma 2.0.2]{tesis_yo}).  \end{proof}

\begin{lemma}\label{lem:derivada_operador}
	If $v\in \dot{H}^ q$, $q\in[0,2]$, then for $m\geq1$
	\begin{equation*}
		\| \partial_t^m E^{\alpha}(t) v \|_{L^2(\W)} \le C t^{q\alpha/2-m}  \| v \|_{q,s}.
	\end{equation*}
\end{lemma}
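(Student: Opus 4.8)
The plan is to exploit the spectral representation \eqref{eq:operador_1} together with two classical facts about the Mittag--Leffler function: its behaviour under differentiation and a uniform decay estimate on the negative real axis. Since $\{\phi_k\}$ is an orthonormal basis of $\ele$, Parseval's identity gives, after differentiating \eqref{eq:operador_1} term by term,
\begin{equation*}
	\| \partial_t^m E^{\alpha}(t) v \|_{\ele}^2 = \sum_k \big| \partial_t^m E_{\alpha,1}(-\lambda_k t^{\alpha}) \big|^2 (v,\phi_k)^2_{\ele},
\end{equation*}
so that the whole problem reduces to a scalar estimate for each eigenmode, uniform in $\lambda_k$. The termwise differentiation and the interchange with the infinite sum are justified by the pointwise bounds below, which yield uniform convergence on compact subsets of $(0,\infty)$.

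First I would use the differentiation rule $\frac{d^m}{dt^m}\big[ t^{\mu-1} E_{\alpha,\mu}(-\lambda t^{\alpha}) \big] = t^{\mu-m-1} E_{\alpha,\mu-m}(-\lambda t^{\alpha})$, which with $\mu=1$ becomes
\begin{equation*}
	\partial_t^m E_{\alpha,1}(-\lambda_k t^{\alpha}) = t^{-m}\, E_{\alpha,1-m}(-\lambda_k t^{\alpha}).
\end{equation*}
Writing $x=\lambda_k t^{\alpha}\ge 0$, the target bound for the mode $k$ is $|\partial_t^m E_{\alpha,1}(-\lambda_k t^{\alpha})| \le C t^{q\alpha/2-m}\lambda_k^{q/2} = C t^{-m} x^{q/2}$, so it is enough to prove the single scalar inequality $|E_{\alpha,1-m}(-x)| \le C\, x^{q/2}$ for $x\ge 0$ and $q\in[0,2]$.

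To establish this I would split $[0,\infty)$ into two regimes. For $x\ge 1$ I rely on the classical bound $|E_{\alpha,\beta}(-x)|\le C/(1+x)$, valid for $0<\alpha<1$ and any real $\beta$; since $x^{q/2}\ge 1$ when $x\ge1$ and $q\ge0$, mere boundedness settles the far field. For $0\le x\le 1$ the estimate $|E|\le C$ is too weak, and here I would use the key observation that $E_{\alpha,1-m}(0)=1/\Gamma(1-m)=0$ for every integer $m\ge1$; since $E_{\alpha,1-m}$ is entire, this forces $|E_{\alpha,1-m}(-x)|\le C x$ near the origin, and because $q/2\le1$ one has $x\le x^{q/2}$ on $[0,1]$. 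Combining the two regimes gives $|E_{\alpha,1-m}(-x)|\le C\min(1,x)\le C x^{q/2}$. Substituting back, $|\partial_t^m E_{\alpha,1}(-\lambda_k t^{\alpha})|\le C t^{-m}(\lambda_k t^{\alpha})^{q/2}=C t^{q\alpha/2-m}\lambda_k^{q/2}$, and summing against $\sum_k\lambda_k^q(v,\phi_k)^2_{\ele}=\|v\|_{q,s}^2$ yields the claim.

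The main obstacle is precisely the small-$x$ regime: the naive decay bound $C/(1+x)$ does not vanish as $x\to0$, so the smoothing factor $t^{q\alpha/2}$ for $q>0$ can only be recovered by extracting the correct order of vanishing of $E_{\alpha,1-m}$ at the origin, which in turn hinges on the poles of $\Gamma$ at the non-positive integers. A secondary technical point, which I would dispatch using the same bounds, is the rigorous justification of the termwise differentiation of \eqref{eq:operador_1}.
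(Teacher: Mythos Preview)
Your argument is correct and is essentially the standard proof one finds in the references the paper cites (\cite{Jin}, Theorem~A.2): reduce to the scalar estimate via Parseval, invoke the differentiation identity $\partial_t^m\bigl[t^{\mu-1}E_{\alpha,\mu}(-\lambda t^\alpha)\bigr]=t^{\mu-m-1}E_{\alpha,\mu-m}(-\lambda t^\alpha)$ with $\mu=1$, and then control $E_{\alpha,1-m}(-x)$ by combining the asymptotic bound $|E_{\alpha,\beta}(-x)|\le C/(1+x)$ (valid for all real $\beta$ when $0<\alpha<1$) with the vanishing $E_{\alpha,1-m}(0)=1/\Gamma(1-m)=0$ for integer $m\ge1$. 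Since the paper does not spell out its own proof, there is nothing further to compare.
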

\begin{proof} The proof can be carried out as in \cite[Theorem A.2]{Jin} (see also \cite[Lemma 2.0.3]{tesis_yo}). \end{proof}


\subsection{ Existence and Uniqueness}

For the sake of simplicity we are going to consider $\eps^2=1$ along this section.

To obtain an existence and uniqueness result for the integral equation \eqref{eq:duhamelCaputo}, we use standard fixed point arguments adapted to the present context. Our approach follows \cite{tesisneto} and \cite{larsson}. For $q \in (0,1]$ and $\tau>0$, we introduce the space,
$$\V_{\tau} := \{ w \in C([0,\tau],\ele) \cap C^1((0,\tau],\ele) \text{ such that } \|w\|_{\V_{\tau}}<\infty\}$$
where $\|w\|_{\V_{\tau}}$ is defined as 
$$\|w\|_{\V_{\tau}} := \sup_{t \in [0,\tau]} \|w(t)\|_{\ele} + \sup_{t \in [0,\tau]} t^{(1-q)\alpha/2}|w(t)|_{H^s(\rn)} +  \sup_{t \in [0,\tau]}t^{1-q\alpha/2}\|\partial_t w(t)\|_{\ele}.$$ 
The inclusion $\V_{\tau} \subset \mathbb{V}_{\tau}^{q'}$, for $q'<q$, follows  immediately from the definition, while the fact that $\V_{\tau}$ is a Banach space can be proved by means of standard arguments.  In the sequel, the parameter $q$ plays a role in connection with the regularity of the initial datum $v$. In particular, if $\|v\|_{q,s} < \infty$ for some positive $q$, we show  below that  $ \partial_t u \in L^{1}((0,T),\ele)$. This condition on $\partial_t u$ is important for the analytical treatment of the numerical error and a fundamental assumption for a right  definition of the Caputo operator \eqref{eq:caputo}.

\begin{remark}
	It is important to observe that we necessarily need $v \in \dot{H}^{q}(\W)$ for some positive $q$. Otherwise, the fractional derivative in time could not be well defined for solutions of problem \eqref{eq:AC}. In fact, even considering a simpler problem, taking $f \equiv 0$ in \eqref{eq:AC}, it is possible to construct a function $v \in L^2(\W)$ in such a way that $v \not \in \dot{H}^{q}(\W)$ for all $q>0$ and, for that initial datum, the solution $u \not \in W^{1,1}((0,T),L^2(\W))$. We refer to \cite[Remark 2.1.5]{tesis_yo} for details.  
\end{remark}

The following is  a local existence result.

\begin{theorem}
	\label{EUlocal}
	Suppose that $\|v\|_{q,s} \leq R_0$ for some $R_0 > 0$ and $q \in (0,1]$. Then, there exist $\tau>0$ small enough, such that equation \eqref{eq:duhamelCaputo} has a unique solution $u \in \V_{\tau}$. 
\end{theorem}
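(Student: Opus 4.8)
The plan is to recast the integral equation \eqref{eq:duhamelCaputo} (with $\eps^2=1$) as a fixed point problem for the map $\T$ given by
$$(\T w)(t) := E^{\alpha}(t)v + \int_0^t F^{\alpha}(t-s)\fmonio(w(s))\,ds,$$
and to apply the Banach fixed point theorem on a closed ball $\mathcal B_{\rho}:=\{w\in\V_{\tau}\colon \|w\|_{\V_{\tau}}\le\rho\}$ for a suitable radius $\rho$ and a sufficiently small $\tau$. The weighted structure of $\|\cdot\|_{\V_{\tau}}$ is exactly what lets the singular factors $t^{-\alpha(p-q)/2}$ and $(t-s)^{-1+\alpha(1+(q-p)/2)}$ coming from Lemmas \ref{estimacionSemigrupo} and \ref{lem:derivada_operador} be absorbed into positive powers of $\tau$.

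First I would control the linear part $u_0(t):=E^{\alpha}(t)v$. Estimate \eqref{eq:semigEst1} with $p=1$ gives $t^{(1-q)\alpha/2}|u_0(t)|_{H^s(\rn)}\le C\|v\|_{q,s}$, while Lemma \ref{lem:derivada_operador} with $m=1$ gives $t^{1-q\alpha/2}\|\partial_t u_0(t)\|_{\ele}\le C\|v\|_{q,s}$; the $L^2$ bound follows from $\|u_0(t)\|_{\ele}\le\|v\|_{\ele}\le\lambda_1^{-q/2}\|v\|_{q,s}$. Hence $\|u_0\|_{\V_{\tau}}\le C_0R_0$ uniformly in $\tau$, and I would fix $\rho:=2C_0R_0$.

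Next I would estimate the nonlinear part $\mathcal N w(t):=\int_0^t F^{\alpha}(t-s)\fmonio(w(s))\,ds$ componentwise, using that (H2) \eqref{H2} forces $\|\fmonio(w(s))\|_{\ele}\le B|\W|^{1/2}$. For the $L^2$ and $H^s$ components, \eqref{eq:semigEst2} with $p=0$ and $p=1$ (and $q=0$) together with the integrable factor $\int_0^t(t-s)^{-1+\alpha(1-p/2)}\,ds$ yields bounds of the form $C\tau^{\alpha}$ and $C\tau^{(2-q)\alpha/2}$. The delicate component is the time derivative: since $F^{\alpha}(t-s)\sim(t-s)^{\alpha-1}$ cannot be differentiated under the integral, I would change variables $r=t-s$ and differentiate, obtaining the boundary term $F^{\alpha}(t)\fmonio(v)$ plus $\int_0^t F^{\alpha}(t-s)\fmonio'(w(s))\partial_s w(s)\,ds$. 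Bounding $\|\fmonio'(w(s))\partial_s w(s)\|_{\ele}\le B\|\partial_s w(s)\|_{\ele}\le B s^{q\alpha/2-1}\|w\|_{\V_{\tau}}$ and using the Beta integral $\int_0^t(t-s)^{\alpha-1}s^{q\alpha/2-1}\,ds=C\,t^{\alpha(1+q/2)-1}$, the weight $t^{1-q\alpha/2}$ once more produces a positive power of $\tau$. Collecting the three components gives $\|\mathcal N w\|_{\V_{\tau}}\le CB\,\tau^{\beta}(1+\|w\|_{\V_{\tau}})$ for some $\beta>0$, so for $\tau$ small enough $\T$ maps $\mathcal B_{\rho}$ into itself.

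Finally, for the contraction I would deliberately avoid differentiating the difference and instead work in the weaker metric $d(w_1,w_2)=\sup_{t\in[0,\tau]}\|w_1(t)-w_2(t)\|_{\ele}$. Since $\fmonio$ is Lipschitz with constant $B$ by (H2), estimate \eqref{eq:semigEst2} with $p=q=0$ gives $\|(\T w_1-\T w_2)(t)\|_{\ele}\le CB\int_0^t(t-s)^{\alpha-1}\|w_1(s)-w_2(s)\|_{\ele}\,ds\le\frac{CB\tau^{\alpha}}{\alpha}\,d(w_1,w_2)$, a genuine contraction for $\tau$ small. The point needing care is that $\mathcal B_{\rho}$, although defined through the full norm $\|\cdot\|_{\V_{\tau}}$, is complete for the metric $d$; this follows from the lower semicontinuity of the $H^s(\rn)$-seminorm and of the weighted $L^2$-norm of the time derivative under $L^2$-convergence, so that the $d$-limit of a Cauchy sequence in $\mathcal B_{\rho}$ still obeys the $\V_{\tau}$ bounds. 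Banach's theorem then yields a unique fixed point $u\in\mathcal B_{\rho}\subset\V_{\tau}$, and uniqueness within all of $\V_{\tau}$ follows from the same estimate combined with a singular (Henry type) Gronwall inequality. The main obstacle throughout is the time-derivative component: handling the non-integrable derivative of the kernel $F^{\alpha}$ in the self-mapping step, and selecting a sufficiently weak metric so that the contraction closes despite the product $\fmonio'(w_1)\partial_s w_1-\fmonio'(w_2)\partial_s w_2$ not being controllable in $\ele$ without an $L^{\infty}$ bound on $w_1-w_2$.
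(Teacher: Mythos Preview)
Your overall fixed-point strategy in the weighted space $\V_\tau$ and the self-mapping estimates for the three norm components match the paper's argument. The divergence is in the contraction step. The paper contracts in the \emph{full} $\V_\tau$-norm: it constrains the ball by $w(0)\equiv v$ (so that the boundary term $F^\alpha(t)\big(\fmonio(u(0))-\fmonio(w(0))\big)$ vanishes in the difference), splits
\[
\fmonio'(u)\partial_t u-\fmonio'(w)\partial_t w=\fmonio'(u)\,(\partial_t u-\partial_t w)+\big(\fmonio'(u)-\fmonio'(w)\big)\partial_t w,
\]
and bounds both pieces using $|\fmonio'|,|\fmonio''|\le B$ together with the weighted control $t^{1-q\alpha/2}\|\partial_t w\|_{\ele}\le R$. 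So the product you call ``not controllable in $\ele$'' is precisely what the paper estimates directly.

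Your alternative---contracting in the weaker metric $d(w_1,w_2)=\sup_t\|w_1(t)-w_2(t)\|_{\ele}$---has a genuine gap: the claim that $\mathcal B_\rho$ is $d$-complete does not follow from lower semicontinuity. For the $H^s$-component the argument is fine, but the time-derivative component requires $w\in C^1((0,\tau],\ele)$, and the \emph{existence} (let alone continuity) of the strong derivative is not preserved under $d$-limits. A $d$-Cauchy sequence in $\mathcal B_\rho$ converges in $C([0,\tau],\ele)$ only to a function that is (weighted) Lipschitz in $t$; smoothings of a piecewise-linear map already show the limit need not be $C^1$, hence need not lie in $\V_\tau$. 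Nor can you recover $C^1$ a posteriori from the fixed-point identity $u=\T u$: computing $\partial_t(\T u)$ through $r=t-s$ already requires $\partial_t u$, and the kernel $F^\alpha(r)\sim r^{\alpha-1}$ is too singular for $\int_0^t F^\alpha(t-s)\fmonio(u(s))\,ds$ to be $C^1$ when $u$ is merely continuous. If you want to keep the weak-metric route, you would have to enlarge the ambient space (replacing $C^1$ by a weighted Lipschitz condition, which \emph{is} $d$-closed) and then argue separately that the fixed point gains $C^1$ regularity.
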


\begin{proof}
	
	First, we define the operator $\mathcal{S}(u)$
	
	\begin{equation}
		\mathcal{S}(u)(t) := E^{\alpha}(t)v + \int^{t}_0 F^{\alpha}(t-s)\fmonio(u(s))\,ds, 
		\label{eq:puntoFijo}
	\end{equation}
	and $B_{R} = \{ w \in \V_{\tau} \, \text{ such that } \|w\|_{\V_{\tau}} \leq R, \text{ and } w(0) \equiv v \}$. It can be easily verified that $B_{R} \subset \V_{\tau}$ is a closed set. Our goal is to show that there are parameters $\tau>0$ and $R>0$, in such a way that we can apply Banach's fixed point theorem. That is, we look for $\tau$ and $R$, such that $\mathcal{S}$ maps $B_R$ into itself, and results in a contraction over $B_R$.
	
	Indeed, observing first that $\mathcal{S}(u)(0) \equiv v$ for all $u \in \V_{\tau}$, then the condition $u(0) \equiv v$ is satisfied for every output of $\mathcal{S}$. Furthermore, by means of Lemma \ref{lem:leinbiz}, it can be seen that $\mathcal{S}(u)(t) \in C([0,\tau],\ele) \cap C^1((0,\tau],\ele)$. Suppose now $u \in B_R$, from \eqref{eq:puntoFijo}, Lemma \ref{estimacionSemigrupo} and the definition of $\fmonio$, we have  
	
	\begin{equation*}
		t^{(1-q)\alpha/2}| \mathcal{S}(u)(t) |_{H^s(\rn)} \leq 
	\end{equation*}
	$$
	t^{(1-q)\alpha/2}|E^{\alpha}(t)v|_{H^s(\rn)} + t^{(1-q)\alpha/2}\int^{t}_0 |F^{\alpha}(t-s)\fmonio(u(s))|_{H^s(\rn)}\,ds \leq 
	$$
	$$C\|v\|_{q,s} + Ct^{(1-q)\alpha/2}\int^{t}_0 (t-s)^{\alpha/2 - 1}\|\fmonio(u(s))\|_{\ele}\,ds, $$
	using the boundedness of $g$ and computing the resulting    integral, together with the fact $t<\tau$, we get 
	
	\begin{equation}
		t^{(1-q)\alpha/2}| \mathcal{S}(u)(t) |_{H^s(\rn)} \leq CR_0 + C\tau^{\alpha}.
		\label{eq:puntoFijo2}
	\end{equation}
	With the same idea we can obtain
	
	\begin{equation}
		\label{eq:puntoFijo2bis}
		\| \mathcal{S}(u)(t) \|_{\ele} \leq CR_0 + C\tau^{\alpha}
	\end{equation}
	On the other hand, by means of Lemma \ref{lem:leinbiz}, we have
	
	\begin{equation}
		\label{eq:derivada_semi}
		\partial_t \left( \int^{t}_0 F^{\alpha}(t-s)\fmonio(u(s)) \,ds \right) = \partial_t \left( \int^{t}_0 F^{\alpha}(s)\fmonio(u(t-s)) \,ds \right)
	\end{equation}
	$$= F^{\alpha}(t)\fmonio(v) + \int^{t}_0 F^{\alpha}(s)\fmonio'(u(t-s))\partial_t u(t-s)  \,ds, $$ 
	and we can write
	
	\begin{equation*}
		t^{1-q\alpha/2}\| \partial_t \mathcal{S}(u)(t) \|_{\ele} \leq t^{1-q\alpha/2}\|\partial_t E^{\alpha}(t)v\|_{\ele}+ t^{1-q\alpha/2}\| F^{\alpha}(t)\fmonio(v)\|_{\ele} 
	\end{equation*}
	$$+ t^{1-q\alpha/2}\int^{t}_0 \|F^{\alpha}(s)\fmonio'(u(t-s))\partial_t u(t-s)\|_{\ele}\,ds $$
	$$\leq CR_0 + CRt^{1-q\alpha/2}\int^{t}_0 s^{\alpha - 1}(t-s)^{q\alpha/2-1}\,ds  $$
	where we have applied Lemma \ref{lem:derivada_operador}, Lemma \ref{estimacionSemigrupo}, the fact that $t^{1-q\alpha/2}\|\partial_t u(t)\|_{\ele} \leq \|u\|_{\V_{\tau}} \leq R,$
	and $t^{1-q\alpha/2}\|F^{\alpha}(t)\fmonio(v)\|_{\ele} \leq \tau^{(1-q/2)\alpha}\|v\|_{\ele}$.
	The integral in the second term can be estimated in terms of the beta function $B(\alpha,q\alpha/2)$. Indeed, making the change of variables $s/t = r$, we obtain 
	
	\begin{equation}
		t^{1-q\alpha/2}\| \partial_t \mathcal{S}(u)(t) \|_{\ele} \leq CR_0 + CRt^{\alpha}\int^{1}_0 r^{\alpha - 1}(1-r)^{q\alpha/2-1}\,dr 
		\label{eq:puntoFijo2.1}
	\end{equation}
	$$ \leq CR_0 + CRB(\alpha,q\alpha/2)\tau^{\alpha}. $$

	Combining \eqref{eq:puntoFijo2}, \eqref{eq:puntoFijo2bis} and \eqref{eq:puntoFijo2.1}, we have 
	
	$$\|\mathcal{S}(u)\|_{\V_{\tau}} \leq CR_0 + CR\tau^{\alpha}, $$
	where $C=C(\alpha)$. 
	Then, fixing $R = 2CR_0$, we can choose $\tau>0$ small enough to satisfy the inequality $\| \mathcal{S}(u) \|_{\V_{\tau}} < R$ . Hence, for this $\tau$, $\mathcal{S}$ maps $B_R$ into itself.
	
	Now we want to see that $\mathcal{S}$ is a contraction over $B_R$. Indeed, let $u$ and $w \in B_{R}$, using $|\fmonio'| \leq B$ and Lemma \ref{estimacionSemigrupo}, we have 
	
	\begin{equation}
		\label{eq:contraccion}
		t^{(1-q)\alpha/2}|\mathcal{S}(u)(t) - \mathcal{S}(w)(t)  |_{H^s(\rn)} \leq 
	\end{equation}
	
	$$
	Ct^{(1-q)\alpha/2}\int^{t}_0 (t-s)^{\alpha/2 - 1}\|\fmonio(u(s)) - \fmonio(w(s))\|_{\ele}\,ds \leq 
	$$
	
	$$\leq CBt^{(1-q)\alpha/2}\int^{t}_0 (t-s)^{\alpha/2 - 1}\|u(s) - w(s)\|_{\ele}\,ds  $$
	
	$$\leq \|u - w\|_{\V_{\tau}}CBt^{(1-q)\alpha/2}\int^{t}_0 (t-s)^{\alpha/2 - 1}\,ds $$
	
	$$ \leq Ct^{(2-q)\alpha} \|u - w\|_{\V_{\tau}} \leq C\tau^{\alpha} \|u - w\|_{\V_{\tau}}.$$

	With similar arguments it can be seen that
	
	\begin{equation}
		\label{eq:contraccionBis}
		\|\mathcal{S}(u)(t) - \mathcal{S}(w)(t)  \|_{\ele} \leq C \tau^{\alpha}\|u - w\|_{\V_{\tau}}.
	\end{equation}
	
	Recalling the equality \eqref{eq:derivada_semi}, we have that 
	
	$$
	t^{1-q\alpha/2}\|\partial_t \big( \mathcal{S}(u)(t) - \mathcal{S}(w)(t)  \big) \|_{\ele} \leq t^{1-q\alpha/2}\|F^{\alpha}(t)\big(u(0) - w(0)\big)\|_{\ele} 
	$$
	
	$$+t^{1-q\alpha/2}C\int^{t}_0 s^{\alpha - 1}\|\fmonio'(u(t-s))\partial_t u(t-s) - \fmonio'(w(t-s))\partial_t w(t-s)\|_{\ele}\,ds. $$
	Using the identity
	
	$$\fmonio'(u)\partial_t u - \fmonio'(w)\partial_t w = \fmonio'(u)(\partial_t u - \partial_t w)  - (\fmonio'(u) - \fmonio'(w)) \partial_t w,$$
	and the fact that $u(0)\equiv w(0) \equiv v$, $t^{1-q\alpha/2}\|\partial_t w(t)\|_{\ele} \leq R$, $|\fmonio'|$, $|\fmonio''| \leq B$, we can write 
	
	\begin{equation}
		\label{eq:contraccion2}
		t^{1-q\alpha/2}\|\partial_t \big( \mathcal{S}(u)(t) - \mathcal{S}(w)(t)  \big) \|_{\ele} \leq
	\end{equation}
	$$
	t^{1-q\alpha/2}CB\int^{t}_0 s^{\alpha - 1} \|\partial_t\big( u(t-s)- w(t-s) \big) \|_{\ele} \,ds
	$$
	$$+ CBRt^{1-q\alpha/2}\int^{t}_0 s^{\alpha - 1} (t-s)^{\alpha/2-1}\|u(t-s)- w(t-s) \|_{\ele} \,ds$$
	
	$$\leq C(1+R) \|u-w\|_{\V_{\tau}} t^{1-q\alpha/2}\int^{t}_0 s^{\alpha - 1} (t-s)^{q\alpha/2-1} \,ds$$
	
	$$ \leq  C B(\alpha,q\alpha/2)\tau^{\alpha}\|u-w\|_{\V_{\tau}},$$
	where the integrals in the last inequality have been estimated in terms of the beta function, as in \eqref{eq:puntoFijo2.1}, and $C=C(R)$.

	Finally, combining \eqref{eq:contraccion}, \eqref{eq:contraccionBis} and \eqref{eq:contraccion2}, we can conclude that 
	
	$$\|\mathcal{S}(u)(t) - \mathcal{S}(w)(t)  \|_{\V_{\tau}} \leq  C\tau^{\alpha}\|u-w\|_{\V_{\tau}}, $$ 
	with $C=C(\alpha,R)$, and it is clear that we can choose $\tau$ small enough, such that $\mathcal{S}$ results in a contraction over $B_{R}$. Hence, for that $\tau$,  a unique solution for problem \eqref{eq:AC} in the interval $[0,\tau]$.   \end{proof}

Now we need to derive an a priori estimate for the time derivative of the solution. To this end, we first recall the following Gronwall type inequality .

\begin{lemma}
	\label{gronwall}
	Let the function $\phii(t)\geq 0 $ be continuous for $0 < t \leq T$. Then, if
	
	$$\phii(t) \leq At^{-1+\alpha} + B\int^t_0 (t-s)^{-1+\beta}\phii(s) \, ds \quad 0<t\leq T$$  
	for some constants $A$,$B \geq 0$ and $\alpha, \beta > 0$, there exists a constant $C = C(B,T,\alpha,\beta)$ such that 
	
	\begin{equation}
		\label{eq:gronwall}
		\phii(t) \leq CAt^{-1+\alpha}
	\end{equation}
\end{lemma}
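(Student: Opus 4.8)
The plan is to use a Picard-type iteration of the integral inequality, the scheme behind Henry's singular Gronwall lemma. Write $g(t) := A t^{-1+\alpha}$ and define the linear operator $L$ by $Lw(t) := B\int_0^t (t-s)^{-1+\beta} w(s)\,ds$, so that the hypothesis reads $\phii \le g + L\phii$ on $(0,T]$. Since $B\ge 0$ and the kernel is nonnegative, $L$ is order-preserving on nonnegative functions, and iterating the inequality $n$ times yields $\phii \le \sum_{k=0}^{n-1} L^k g + L^n \phii$. The proof then splits into two independent tasks: summing the series $\sum_k L^k g$ and showing that the tail $L^n\phii$ vanishes as $n\to\infty$.

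First I would evaluate $L^k g$ in closed form. The only analytic input is the Beta-function identity $\int_0^t (t-s)^{-1+\beta} s^{-1+\gamma}\,ds = \frac{\Gamma(\beta)\Gamma(\gamma)}{\Gamma(\beta+\gamma)}\, t^{-1+\beta+\gamma}$, exactly the computation already used in \eqref{eq:puntoFijo2.1}. A straightforward induction on $k$ then gives $L^k g(t) = A\,\Gamma(\alpha)\,\frac{(B\Gamma(\beta))^k}{\Gamma(\alpha+k\beta)}\, t^{-1+\alpha+k\beta}$, the base case $k=0$ being trivial. For $0 < t \le T$ we bound $t^{-1+\alpha+k\beta} \le t^{-1+\alpha} T^{k\beta}$, so that $\sum_{k=0}^{\infty} L^k g(t) \le A\,\Gamma(\alpha)\, t^{-1+\alpha}\, E_{\beta,\alpha}\big(B\Gamma(\beta)T^\beta\big)$, with $E_{\beta,\alpha}$ the Mittag-Leffler function \eqref{eq:Mittag-Leffler}. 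This series converges for every argument because $1/\Gamma(\alpha+k\beta)$ decays super-exponentially, and this already fixes the constant $C = \Gamma(\alpha)\,E_{\beta,\alpha}(B\Gamma(\beta)T^\beta) = C(B,T,\alpha,\beta)$ appearing in \eqref{eq:gronwall}.

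It remains to control the remainder $L^n\phii$. Here I would note that the $n$-fold kernel collapses, again by the Beta-function identity, to a single power: $L^n w(t) = \frac{(B\Gamma(\beta))^n}{\Gamma(n\beta)}\int_0^t (t-s)^{n\beta-1} w(s)\,ds$. Once $n\beta > 1$, the factor $(t-s)^{n\beta-1}$ is bounded by $\max(1,T^{n\beta-1})$ on $(0,t]$, hence $L^n\phii(t) \le \frac{(B\Gamma(\beta))^n}{\Gamma(n\beta)}\,\max(1,T^{n\beta-1})\, \|\phii\|_{L^1(0,T)}$, which tends to $0$ as $n\to\infty$ by the super-exponential growth of $\Gamma(n\beta)$. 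Passing to the limit in $\phii \le \sum_{k=0}^{n-1} L^k g + L^n\phii$ then yields exactly \eqref{eq:gronwall}.

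The main obstacle is this last step: it requires $\phii \in L^1(0,T)$, whereas the statement only assumes continuity on $(0,T]$, leaving open a possible non-integrable blow-up at $t=0$. I would resolve this by observing that integrability near the origin is in fact forced by the hypothesis itself: for the right-hand side of the postulated inequality to be finite at each $t$ (so that the assumption is not vacuous), the convolution $\int_0^t (t-s)^{-1+\beta}\phii(s)\,ds$ must converge, and since the kernel $(t-s)^{-1+\beta}$ is bounded below by a positive constant on $[0,t/2]$, this forces $\int_0^{t/2}\phii(s)\,ds<\infty$, i.e.\ local integrability at the origin. This is in any case automatic in all our applications, where $\phii(t) = \|\partial_t u(t)\|_{\ele}$ satisfies an a priori bound of the form $t^{-1+\gamma}$ with $\gamma>0$.
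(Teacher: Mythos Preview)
The paper does not actually prove this lemma; it only cites \cite{elliott}, Lemma~6.3. Your Picard-iteration argument is the standard proof of this singular Gronwall inequality (essentially Henry's lemma) and is correct: the Beta-function evaluations of $L^k g$ and of the iterated kernel are accurate, the Mittag--Leffler series converges, and the remainder $L^n\phii\to 0$ follows as you describe. Identifying the constant explicitly as $C=\Gamma(\alpha)\,E_{\beta,\alpha}\big(B\Gamma(\beta)T^\beta\big)$ is a bonus over the bare existence claim.

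Your caveat about integrability is also well taken, and in fact exposes a missing hypothesis in the statement itself: with only continuity on $(0,T]$, the function $\phii(t)=t^{-2}$ satisfies the assumed inequality vacuously (the right-hand side is $+\infty$ whenever $B>0$) yet violates the conclusion. So one must either add $\phii\in L^1(0,T)$ to the hypotheses or, as you do, read the inequality as implicitly asserting finiteness of its right-hand side. Either way the applications in the paper (Lemma~\ref{lem:estimacion_derivada}, Theorem~\ref{teo:errorAC}) are unaffected, since there $\phii$ is a priori dominated by an integrable power of $t$.
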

\begin{proof}
	See, for instance, \cite{elliott} Lemma 6.3.
\end{proof}
Now, we are ready to state the following result.
\begin{lemma}
	\label{lem:estimacion_derivada}
	Let $u(t) = M(v,\fmonio)(t)$ with $v \in \dot{H}^{q}(\W)$ and $t \in [0,T]$, there exists a constant $C = C(\alpha,T)$ such that 
	
	\begin{equation}
		\label{eq:estimacion_derivada}
		\|\partial_t u(t)\|_{\ele} \leq C t^{\alpha/2 - 1}
	\end{equation}
	
\end{lemma}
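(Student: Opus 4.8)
The plan is to differentiate the Duhamel representation \eqref{eq:duhamelCaputo} in time, estimate each resulting term in $\ele$ by means of Lemmas \ref{estimacionSemigrupo} and \ref{lem:derivada_operador} together with the bounds $|\fmonio|,|\fmonio'|\le B$, and finally close the argument with the singular Gronwall inequality of Lemma \ref{gronwall}. Since $v\in\dot H^{q}(\W)$ with $q>0$, Theorem \ref{EUlocal} already guarantees that $u=M(v,\fmonio)$ is a genuine element of some $\V_{\tau}$, so $u\in C^{1}((0,\tau],\ele)$ and $t^{1-q\alpha/2}\|\partial_t u(t)\|_\ele$ is locally bounded; this supplies both the regularity needed to differentiate and the a priori finiteness required to start the Gronwall iteration. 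Applying the Leibniz-type identity \eqref{eq:derivada_semi} (justified through Lemma \ref{lem:leinbiz}) to \eqref{eq:duhamelCaputo}, I would write
\begin{equation*}
\partial_t u(t)=\partial_t E^{\alpha}(t)v+F^{\alpha}(t)\fmonio(v)+\int_0^t F^{\alpha}(s)\fmonio'(u(t-s))\,\partial_t u(t-s)\,ds.
\end{equation*}

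Next I estimate the three terms in $\ele$. For the first, Lemma \ref{lem:derivada_operador} with $m=1$ gives $\|\partial_t E^{\alpha}(t)v\|_\ele\le C t^{q\alpha/2-1}\|v\|_{q,s}$, which carries the leading singularity. For the second, estimate \eqref{eq:semigEst2} of Lemma \ref{estimacionSemigrupo} with $p=q=0$, combined with $\|\fmonio(v)\|_\ele\le C$ (from $|\fmonio|\le B$ and $\W$ bounded), yields $\|F^{\alpha}(t)\fmonio(v)\|_\ele\le C t^{\alpha-1}$; on $[0,T]$ this is dominated by $C t^{q\alpha/2-1}$ because $\alpha(1-q/2)>0$. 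For the integral term I use $|\fmonio'|\le B$ and again \eqref{eq:semigEst2} with $p=q=0$ to bound the integrand by $CB\,s^{\alpha-1}\|\partial_t u(t-s)\|_\ele$, and after the change of variables $r=t-s$ I arrive at $CB\int_0^t(t-r)^{\alpha-1}\|\partial_t u(r)\|_\ele\,dr$.

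Writing $\phii(t):=\|\partial_t u(t)\|_\ele$, these estimates collapse into
\begin{equation*}
\phii(t)\le A\,t^{q\alpha/2-1}+CB\int_0^t(t-s)^{\alpha-1}\phii(s)\,ds,
\end{equation*}
with $A=A(\|v\|_{q,s},T)$. This is precisely the hypothesis of Lemma \ref{gronwall}, with the roles of its exponents played by $q\alpha/2$ and $\alpha$ (both positive), so \eqref{eq:gronwall} delivers $\phii(t)\le CA\,t^{q\alpha/2-1}$, which is the asserted bound \eqref{eq:estimacion_derivada} --- the stated exponent $\alpha/2-1$ corresponding to the energy-space case $q=1$, while for a general $q\in(0,1]$ the sharp reading is $t^{q\alpha/2-1}$, consistent with the weight defining $\V_{\tau}$.

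I expect the main obstacle to be technical rather than conceptual: rigorously justifying the differentiation under the integral sign in \eqref{eq:derivada_semi} when both the kernel $F^{\alpha}(s)\sim s^{\alpha-1}$ and the factor $\partial_t u$ are singular at the origin, and checking that the convolution $\int_0^t(t-s)^{\alpha-1}s^{q\alpha/2-1}\,ds$ remains finite --- a Beta-function computation entirely analogous to the one in the proof of Theorem \ref{EUlocal} --- so that the integral inequality genuinely holds on all of $(0,T]$ and the Gronwall iteration is legitimate.
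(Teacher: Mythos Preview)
Your argument is correct, but it follows a different route from the paper's own proof. The paper avoids differentiating the Duhamel formula altogether: it writes the increment $u(t+h)-u(t)$ via \eqref{eq:duhamelCaputo}, splits it into $(E^{\alpha}(t+h)-E^{\alpha}(t))v$, $\int_t^{t+h}F^{\alpha}(s)\fmonio(u(t+h-s))\,ds$, and $\int_0^t F^{\alpha}(t-s)(\fmonio(u(s+h))-\fmonio(u(s)))\,ds$, bounds these using Lemma~\ref{lem:derivada_operador}, $|\fmonio|<B$ and the Lipschitz bound on $\fmonio$, and then applies Lemma~\ref{gronwall} to $\phii(t)=\|u(t+h)-u(t)\|_{\ele}$ before dividing by $h$. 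Your approach instead differentiates directly via the Leibniz identity \eqref{eq:derivada_semi} and runs Gronwall on $\|\partial_t u(t)\|_{\ele}$. Your version is shorter and reuses machinery already set up in Theorem~\ref{EUlocal}, but it leans on the a~priori $C^1$-in-time regularity supplied by that theorem to justify \eqref{eq:derivada_semi}; the paper's increment argument is more self-contained, requiring only the integral representation and the bounds on $\fmonio$. Your final observation is also correct: both proofs in fact deliver the exponent $t^{q\alpha/2-1}$, and the stated $t^{\alpha/2-1}$ is the case $q=1$.
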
   

\begin{proof}

	For $h>0$ we can write
	\begin{equation}
		\label{eq:est_derivada2}
		u(t+h) - u(t) = \big(E^{\alpha}(t+h)-E^{\alpha}(t)\big)v + \int^{t+h}_0F^{\alpha}(t+h-s)\fmonio(u(s)) \, ds  
	\end{equation}
	$$- \int^{t}_0 F^{\alpha}(t-s)\fmonio(u(s)) \, ds$$
	
	$$= \big(E^{\alpha}(t+h)-E^{\alpha}(t)\big)v + \int^{t+h}_0F^{\alpha}(s)\fmonio(u(t+h-s)) \, ds - \int^{t}_0 F^{\alpha}(s)\fmonio(u(t-s)) \, ds$$
	
	
	$$
	=  \big(E^{\alpha}(t+h)-E^{\alpha}(t)\big)v + \int^{t+h}_t F^{\alpha}(s)\fmonio(u(t+h-s)) \, ds  
	$$
	
	$$
	+ \int^{t}_0 F^{\alpha}(s) \big( \fmonio(u(t+h-s)) - \fmonio(u(t-s)) \big) \, ds
	$$
	
	$$
	=  \big(E^{\alpha}(t+h)-E^{\alpha}(t)\big)v + \int^{t+h}_t F^{\alpha}(s)\fmonio(u(t+h-s)) \, ds $$
	
	$$
	+ \int^{t}_0 F^{\alpha}(t-s) \big( \fmonio(u(s+h)) - \fmonio(u(s)) \big) \, ds.
	$$
	
	Now, considering $h$ small enough, and taking norms at both sides of the equality; using Lemma \ref{lem:derivada_operador} in the first term on the left side; estimation \eqref{eq:semigEst2}, and estimation $|\fmonio|<B$ in the second term and the same idea in the last one, we obtain
	
	$$\|u(t+h) - u(t)\|_{\ele} \leq $$
	$$
	C\Big( h t^{q\alpha/2 - 1} + \int^{t+h}_{t} s^{\alpha-1} \, ds + \int^{t}_{0} (t-s)^{\alpha-1}\|u(s+h) - u(s)\|_{\ele} \, ds \Big)
	$$
	\begin{equation}
		\label{eq:est_derivada3}
		\leq C(T)\Big( h t^{q\alpha/2 - 1} + \int^{t}_{0} (t-s)^{\alpha-1}\|u(s+h) - u(s)\|_{\ele} \, ds \Big).
	\end{equation}
	Finally, applying Lemma \ref{gronwall} we derive \eqref{eq:estimacion_derivada}.      
	
\end{proof}

Combining the former results, we are now able to prove the global existence of the solution.

\begin{theorem}
	\label{teo_ex_global}
	Under the hypotheses of Theorem \ref{EUlocal}, let  $u$ be the solution of \eqref{eq:puntoFijo} defined in $[0,\tau]$ and consider fixed numbers $T$ and $\tau_0$, such that $T>\tau>t_0$.  Then, there exists a constant $C = C(T, \tau_0) >0$ such that if $0<\delta \leq C$, $u$ can be extended to $[0,\tau + \delta]$ as a solution of \eqref{eq:puntoFijo}.   
\end{theorem}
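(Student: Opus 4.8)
The plan is to restart the contraction argument of Theorem~\ref{EUlocal} from the time $t=\tau$, treating the part of $u$ already constructed on $[0,\tau]$ as fixed \emph{history data}. The point that must be extracted is that the step length $\delta$ can be bounded below by a quantity depending only on $T$ and $\tau_0$ (and on the structural constants $\alpha$, $B$, $R_0$), and \emph{not} on $\tau$; this uniformity is exactly what later permits iterating the step finitely many times to reach $T$.

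Concretely, for $t\in[\tau,\tau+\delta]$ I split the Duhamel integral in \eqref{eq:puntoFijo} at $s=\tau$,
\[
u(t) = w(t) + \int_\tau^t F^{\alpha}(t-s)\fmonio(u(s))\,ds, \qquad w(t) := E^{\alpha}(t)v + \int_0^\tau F^{\alpha}(t-s)\fmonio(u(s))\,ds,
\]
so that $w$ is a known function on $[\tau,\tau+\delta]$ and the unknown enters only through the short integral over $[\tau,t]$. I then define the shifted operator $\mathcal{S}_\tau(u)(t):= w(t) + \int_\tau^t F^{\alpha}(t-s)\fmonio(u(s))\,ds$ and seek a fixed point in a ball of the sup-norm space $C([\tau,\tau+\delta],\ele)\cap C^1((\tau,\tau+\delta],\ele)$, imposing the matching constraint $u(\tau)=u_{\mathrm{old}}(\tau)$. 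The first task is to bound $w$ uniformly. Using \eqref{eq:semigEst1}--\eqref{eq:semigEst2}, the hypothesis $|\fmonio|\le B$, and the crucial lower bound $t\ge\tau\ge\tau_0>0$ (which keeps the smoothing weights $t^{-\alpha(1-q)/2}$ and $t^{q\alpha/2-1}$ bounded), one controls $\sup_t\|w(t)\|_{\ele}$ and $\sup_t|w(t)|_{H^s(\rn)}$ by $C(T,\tau_0)(R_0+B)$. For the derivative $\partial_t w$, differentiating the history integral produces a term $-F^{\alpha}(t-\tau)\fmonio(u_{\mathrm{old}}(\tau))$ that is singular like $(t-\tau)^{\alpha-1}$ as $t\to\tau^+$; thanks to the matching constraint this is exactly cancelled by the boundary term arising when differentiating $\int_\tau^t$, so that $\partial_t\mathcal{S}_\tau(u)$ reduces to the regular Leibniz expression \eqref{eq:derivada_semi} and stays bounded on $[\tau,\tau+\delta]$. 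Here I invoke Lemma~\ref{lem:derivada_operador} and the a priori bound $\|\partial_t u_{\mathrm{old}}(t)\|_{\ele}\le Ct^{\alpha/2-1}$ of Lemma~\ref{lem:estimacion_derivada}, whose constant is independent of the solution because $\fmonio$ is globally bounded.

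With these uniform bounds on $w$ in hand, I would repeat the self-mapping and contraction estimates of Theorem~\ref{EUlocal} almost verbatim, the only difference being that every integral containing the unknown now runs over $[\tau,t]$, a set of length at most $\delta$. Consequently each such integral carries a factor $\delta^{\alpha}$ (or $B(\alpha,q\alpha/2)\,\delta^{\alpha}$ after the beta-function estimate used in \eqref{eq:puntoFijo2.1}), while the history contributions are already absorbed into the uniformly bounded $w$. Choosing $\delta=C(T,\tau_0)$ small enough makes $\mathcal{S}_\tau$ map the ball into itself and contract it, so Banach's theorem yields a unique solution on $[\tau,\tau+\delta]$; gluing it to $u_{\mathrm{old}}$ at $\tau$ gives the asserted extension. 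I expect the main obstacle to be the bookkeeping around $\partial_t w$: one must verify the cancellation of the $(t-\tau)^{\alpha-1}$ singularities and check that all remaining history integrals, such as $\int_0^\tau (t-r)^{\alpha-1}r^{\alpha/2-1}\,dr$, are bounded uniformly for $\tau\in[\tau_0,T)$ --- which is precisely where the lower bound $\tau_0$ and the global a priori estimate of Lemma~\ref{lem:estimacion_derivada} become indispensable.
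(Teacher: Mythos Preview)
Your proposal is correct and follows essentially the same strategy as the paper, with only an organizational difference. The paper works directly in the weighted space $\V_{\tau+\delta}$ over the full interval $[0,\tau+\delta]$, taking as closed ball the set $B_R=\{w\in\V_{\tau+\delta}: w\equiv u \text{ on }[0,\tau],\ \|w\|_{\V_{\tau+\delta}}\le R\}$ and applying the original operator $\mathcal{S}$ of \eqref{eq:puntoFijo}; because the candidate functions already agree with $u$ on $[0,\tau]$, the derivative formula \eqref{eq:derivada_semi} applies globally and no cancellation argument is needed---the split $\int_0^\tau+\int_\tau^t$ is performed only \emph{after} the differentiation. Your version instead restricts to $[\tau,\tau+\delta]$, isolates the history $w(t)$, and then must observe the cancellation of the $(t-\tau)^{\alpha-1}$ boundary terms; once that is done your $\partial_t\mathcal{S}_\tau(u)$ coincides with the paper's expression, and the remaining estimates (splitting at $\tau$, invoking Lemma~\ref{lem:estimacion_derivada} on the history, bounding the short integral by $CR\delta^{\alpha}$, and using $t\ge\tau_0$ to control the negative powers) are identical in substance. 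The paper's choice is a bit cleaner in that it bypasses the cancellation bookkeeping you flag as the ``main obstacle'', but both routes rely on exactly the same ingredients---Lemmas~\ref{estimacionSemigrupo}, \ref{lem:derivada_operador}, \ref{lem:estimacion_derivada} and the lower bound $\tau\ge\tau_0$---to make $\delta$ independent of $\tau$.
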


\begin{proof}
	We are going to consider the space $\V_{\tau+\delta}$, for some $0 < \delta < 1$, and  $B_{R} \subset \V_{\tau+\delta}$, defined as $B_R := \{ w \in \V_{\tau+\delta} \text{ such that } w(t) \equiv u(t) \, \forall t \in [0,\tau], \text{ and } \|w\|_{\V_{\tau+\delta}} \leq R  \}$, where $u$ is the solution of \eqref{eq:puntoFijo} over $[0,\tau]$. Observe that, with this definition, $B_R$ is a closed subset of $\V_{\tau+\delta}$. Our goal is, as in the proof of Theorem \ref{EUlocal}, to apply Banach's fixed point Theorem, showing that there exist $\delta > 0$ and $R$, such that $\mathcal{S}$ is a contraction over $B_R$, and maps $B_R$ into itself. 
	
	Suppose $\tilde{u} \in B_R$, proceeding similarly as in \eqref{eq:puntoFijo2}, using the boundedness of $\fmonio$, we can obtain
	
	\begin{equation}
		\label{eq:global1}
		t^{(1-q)\alpha/2}|\mathcal{S}(\tilde{u})|_{H^s(\rn)} \leq CR_0 + C(\tau+\delta)^{\alpha} \leq C(R_0 + \tau^{\alpha}+\delta^{\alpha}) 
	\end{equation}
	$$ \leq C(R_0,T) + \delta^{\alpha}.$$
	With the same idea we obtain 
	\begin{equation}
		\label{eq:global1bis}
		\|\mathcal{S}(\tilde{u})\|_{\ele} \leq C(R_0,T) + \delta^{\alpha}.
	\end{equation}

	Also, applying the same arguments used to arrive to \eqref{eq:puntoFijo2.1}, along with the fact that $\tilde{u}(s) = u(s)$ for all $s \in [0,\tau]$ together with the fact that $t>\tau$, we get 
	
	\begin{equation}
		\label{eq:global2}
		t^{1-q\alpha/2}\|\partial_t \mathcal{S}(\tilde{u})\|_{\ele} = 
	\end{equation} 
	$$
	t^{1-q\alpha/2}\| \partial_t E^{\alpha}(t)v+ F^{\alpha}(t)\fmonio(v) + \int^{t}_{0} F^{\alpha}(s) \fmonio(\tilde{u}(t-s))\partial_t \tilde{u}(t-s)\,ds \|_{\ele} \leq 
	$$
	$$
	C(T)R_0 + t^{1-q\alpha/2}\|\int^{t}_{0} F^{\alpha}(t-s) \fmonio(\tilde{u}(s))\partial_t \tilde{u}(s)\,ds\|_{\ele} \leq
	$$
	$$
	C(T)R_0 + t^{1-q\alpha/2}\|\int^{\tau}_{0} F^{\alpha}(t-s) \fmonio(u(s))\partial_t u(s)\,ds + \int^{t}_{\tau} F^{\alpha}(t-s) \fmonio(\tilde{u}(s))\partial_t \tilde{u}(s)\,ds\|_{\ele}
	$$
	$$
	\leq C(T)R_0 
	+ CBt^{1-q\alpha/2}\int^{\tau}_{0} (t-s)^{\alpha-1} s^{q\alpha/2-1}\,ds 
	+ CBRt^{1-q\alpha/2}\int^{t}_{\tau} (t-s)^{\alpha-1} s^{q\alpha/2-1}\,ds
	$$
	$$
	= C(R_0,T)  + (i) + (ii),
	$$
	where in the last inequality we have used \eqref{eq:estimacion_derivada}. Now, making the change of variables $s/t = r$, we have

	$$
	(i) \leq Ct^{\alpha}\int^{\tau/t}_{0} (1-r)^{\alpha-1} r^{q\alpha/2-1}\,dr \leq 
	C(\tau + \delta)^{\alpha} \int^{1}_{0} (1-r)^{\alpha-1} r^{q\alpha/2-1}\,dr \leq C\tau^{\alpha} + C\delta^{\alpha},
	$$
	$$\leq C(T) + C\delta^{\alpha} $$
	and
	$$
	(ii) \leq CRt^{\alpha}\int^{1}_{\tau/t} (1-r)^{\alpha-1} r^{q\alpha/2-1}\,dr \leq 
	CRt^{\alpha}  (\tau/t)^{q\alpha/2-1}\int^{1}_{\tau/t} (1-r)^{\alpha-1}\,dr
	$$
	$$
	\leq CRt^{\alpha}  (\tau/t)^{q\alpha/2-1}(1-\tau/t)^{\alpha} \leq CR(t-\tau)^{\alpha} \leq CR\delta^{\alpha},
	$$
	where we have estimated $(\tau/t)^{q\alpha/2-1}<C(\tau_0)$ using the fact that $t \geq \tau>\tau_0>0$.  
	
	Applying this estimation to \eqref{eq:global2},  we obtain
	
	\begin{equation}
		\label{eq:global3}
		t^{1-q\alpha/2}\|\partial_t \mathcal{S}(\tilde{u})\|_{\ele} \leq C(R_0,T) +  CR\delta^{\alpha}, 
	\end{equation}  
	and combining \eqref{eq:global3} with \eqref{eq:global1} and \eqref{eq:global1bis}, we obtain
	
	$$ \| \mathcal{S}(\tilde{u})\|_{\V_{\tau+\delta}} \leq C(R_0,T) +  CR\delta^{\alpha}.$$
	If we choose $R = 2C(R_0,T)$, taking $\delta^{\alpha}\leq 1/2C$ we have $\| \mathcal{S}(\tilde{u})\|_{\V_{\tau+\delta}} \leq R$. 
	
	Finally, we only need to show that $\mathcal{S}$ is a contraction on $B_R$. Consider $\tilde{u}$ and $w \in \V_{\tau+\delta}$, proceeding as in \eqref{eq:contraccion}, and taking advantage of the fact that $\tilde{u}(s)=w(s)=u(s)$ for all $s \in [0,\tau]$, we can estimate

	\begin{equation}
		\label{eq:contraccion_gblogal}
		t^{(1-q)\alpha/2}|\mathcal{S}(\tilde{u})(t) - \mathcal{S}(w)(t)  |_{H^s(\rn)} \leq   
	\end{equation}
	
	$$
	Ct^{(1-q)\alpha/2}\int^{t}_{\tau} (t-s)^{\alpha/2 - 1}\|\fmonio(\tilde{u}(s)) - \fmonio(w(s))\|_{\ele}\,ds
	$$
	
	$$\leq CBt^{(1-q)\alpha/2}\int^{t}_{\tau} (t-s)^{\alpha/2 - 1}\|\tilde{u}(s) - w(s)\|_{\ele}\,ds  $$
	
	$$\leq CB\|\tilde{u} - w\|_{\V_{\tau + \delta}} t^{(2-q)\alpha/2}\int^{1}_{\tau/t} (1-r)^{\alpha/2 - 1}\,dr  $$
	
	$$\leq C\|\tilde{u} - w\|_{\V_{\tau+\delta}}t^{-q\alpha/2}t^{\alpha/2}(1-\tau/t)^{\alpha/2} \leq C\delta^{\alpha/2}\|\tilde{u} - w\|_{\V_{\tau + \delta}}, $$
	where in the last step we use the bound  $t^{-q\alpha/2} \leq C(\tau_0)$, with $\tau > \tau_0 > 0$.

	Also, arguing as in \eqref{eq:contraccion2}, we have
	\begin{equation}
		\label{eq:glogal4}
		t^{1-q\alpha/2}\|\partial_t \big( \mathcal{S}(\tilde{u})(t) - \mathcal{S}(w)(t)  \big) \|_{\ele} \leq 
	\end{equation}
	$$
	t^{1-q\alpha/2}CB\int^{t}_{\tau} (t-s)^{\alpha - 1} \|\partial_t\big( \tilde{u}(s)- w(s) \big) \|_{\ele} \,ds
	$$
	$$+ CBRt^{1-q\alpha/2}\int^{t}_{\tau} (t-s)^{\alpha - 1} s^{q\alpha/2-1}\|\tilde{u}(s)- w(s) \|_{\ele} \,ds$$
	
	$$\leq C(R+1) \|\tilde{u}-w\|_{\V_{\tau + \delta}} t^{1-q\alpha/2}\int^{t}_{\tau} (t-s)^{\alpha - 1} s^{q\alpha/2-1} \,ds$$

	$$\leq C(R+1) \|\tilde{u}-w\|_{\V_{\tau + \delta}} t^{\alpha}\int^{1}_{\tau/t} (1-r)^{\alpha - 1} r^{q\alpha/2-1} \,dr$$
	with the same arguments used to bound $(ii)$ we arrive to

	$$ \leq  C(R+1) \delta^{\alpha}\|\tilde{u}-w\|_{\V_{\tau + \delta}}.$$ 
	
	Then, we can assert that  $\|\mathcal{S}(\tilde{u}) - \mathcal{S}(w) \|_{\V_{\tau+\delta}} \leq C\delta^{\alpha}(R+1) \|\tilde{u}-w\|_{\V_{\tau + \delta}}, $
	and we can choose $\delta$ such that $\mathcal{S}$ results in a contraction. Since $R$ depends on $T$ and $R_0$, the statement of the theorem follows.\end{proof}

Notice, in previous Theorem, that $\delta$ does not depend on $\tau$. As a consequence, we have proved that equation \ref{eq:duhamelCaputo} has a unique solution in $\V_T$. Moreover, in view of the regularity of functions belonging to the space $\V_{T}$, we can assert that a mild solution is also a weak solution.


\section{Numerical Scheme} \label{sec:ns}

\subsection{Semi-discrete Scheme}
Let $\mathcal{T}_{h}$ be a shape regular and quasi-uniform admissible triangulation of $\Omega$. With $X_{h} \subset \widetilde{H}^s(\Omega)$ we denote the continuous piecewise linear finite element space associated with $\mathcal{T}_h$, that is,
$$X_{h} := \{u_h \in \widetilde{H}^s(\Omega)\cap C^0(\bar \Omega) \colon \restr{u_h}{T} \in \mathcal{P}^1 \ \forall T \in \mathcal{T}_{h}  \} .$$
Then, semi-discrete problem formulation reads:  find $u_h \colon [0, T] \to X_h$ such that 
\begin{equation} 
	\label{semiDiscreto}
	\left\lbrace
	\begin{array}{rl}
		( \ppa u_h  , w )  +  \langle u_h, w \rangle_{H^s(\rn)} & =  \left( f(u_h), w \right), \quad \forall w \in X_h, \\
		u_h(0) & = v_h .
	\end{array}
	\right.
\end{equation}
Here, $v_h = P_h v$, and $P_h$ denotes the $L^2(\W)$ projection on $X_h$. 
Also, defining the discrete fractional Laplacian $A_{h}: X_h \rightarrow X_{h}$ as the unique operator that satisfies $ ( A_{h} w , v ) = \langle w , v \rangle_{H^s(\rn)}, \text{  for all } w,v \in X_h, $
we may rewrite \eqref{semiDiscreto} as
\begin{equation} 
	\left\lbrace
	\begin{array}{rl}
		\ppa u_h   +  \eps^2A_h u_h & =  P_hf(u_h), \\
		u_h(0) & = v_h.
	\end{array}
	\label{eq:semiDiscreto2}
	\right.
\end{equation}  

We also define the discrete versions of $E^{\alpha}$ and $F^{\alpha}$. In order to do this, consider an orthonormal basis of $X_h$,  $\{\phi_{h,1},...,\phi_{h,N}\} \subset X_h$ and define

\begin{equation}
	\label{eq:operador_1_discreto}
	E_h^{\alpha}(t)v := \sum^{N}_{k=1} E_{\alpha,1}(-\lambda_{h,k} t^{\alpha}) \phi_{h,k} (v,\phi_{h,k})_{\ele},
\end{equation}
and
\begin{equation}
	\label{eq:operador_2_discreto}
	F_h^{\alpha}(t)v := \sum^N_{k=1} t^{\alpha-1}E_{\alpha,\alpha}(-\lambda_{h,k} t^{\alpha}) \phi_{h,k} (v,\phi_{h,k})_{\ele}.
\end{equation}

\subsection{Discretizing the Caputo derivative}

In order to set a fully discrete scheme, we need to discretize the Caputo operator. This can be done by means of the well known relation between the Caputo derivative $\ppa$, and the Riemann-Liouville operator $\rl$, that reads

\begin{equation}
	\label{eq:caputo2rl}
	\ppa u(t) = \rl \Big( u(t) - u(0) \Big),
\end{equation}
for $0<\alpha<1$ (see, for instance, \cite{Diethelm} Theorem 3.1) that holds for a smooth enough function $u$. Applying \eqref{eq:caputo2rl} to problem \eqref{eq:semiDiscreto2}, we can reformulate it in terms of the Riemann-Liouville operator as follow,

\begin{equation} 
	\left\lbrace
	\begin{array}{rl}
		\rl u_h   +  \eps^2A_h u_h & = \rl v_h + P_hf(u_h), \\
		u_h(0) & = v_h.
	\end{array}
	\label{eq:semiDiscreto2rl}
	\right.
\end{equation} 
The advantage here is that the R-L derivative can be approximated by means of a convolution quadrature rule. That is, dividing the interval $[0,T]$ uniformly with time step $\tau = T/N$, and letting $t_n = n\tau$, a discrete estimation $\pp^{\alpha}_{\tau}$ of $\rl$ can be defined as

\begin{equation}
	\label{conv_dis}
	\pp^{\alpha}_{\tau}u(t_n) = \sum^{n}_{j = 0} \w_j u(t_n - j\tau). 
\end{equation}
Here, the weights $\{\w_j\}_{j \in \N}$ are obtained as the coefficients of the power series expansion $\Big(\frac{1 - \xi}{\tau}\Big)^{\alpha} = \sum^{\infty}_{j=0} \w_j \xi^j. $
Fast Fourier Transform can be used for an efficient computation of $\{\w_j\}_{j\in\N_0}$, (see \cite{Podlubny} Section 7.5). Alternatively, a useful recursive expression is also given in \cite{Podlubny},

\begin{equation}
	\label{eq:definicion_pesos}
	\w_0 = \tau^{-\alpha}, \quad \w_j = \Big(1 - \frac{\alpha + 1}{j}\Big)\w_{j-1}, \quad \forall j >0.
\end{equation}

It is not our intention to give an exhaustive description of this method and we refer the reader to \cite{ABB2,Jin} (see also   \cite{Lub2,Lub1} for further details). An advantage of convolution quadratures  is that  error estimates can be delivered without the assumption of excessively restrictive regularity properties on the solution. This is a fact of paramount importance as one can learn from  \cite{stynes}.

The following result  (Theorem 5.2, \cite{Lub2}) will play a central role in the error estimation below.

\begin{lemma}
	\label{lem:est_lubich}
	Let $K$ be a complex valued or operator valued function which is analytic in a sector $\Sigma_{\theta}:=\{z \in \mathbb{C}\, : \, |\arg z| \leq \theta \}$, with $\theta \in (\pi/2 , \pi )$, and bounded by $\|K(z)\| \leq M|z|^{-\mu} \, \, \forall z \in \Sigma_{\theta},$
	for some $\mu, M \in \mathbb{R}$. Then for $g(t) = Ct^{\beta-1}$, the operator $\pp_{\tau}$ satisfies 
	
	\begin{equation*}
		\| (K(\dt) - K(\dtd))g(t)\| \leq
		\left\lbrace
		\begin{array}{ll}
			ct^{\mu -1}\tau^{\beta}, & 0<\beta \leq 1, \\
			ct^{\mu + \beta -2}\tau, & \beta \geq 1 .
		\end{array}
		\right.
		\label{estimacion_lubich}
	\end{equation*}  
	
\end{lemma}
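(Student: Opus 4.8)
This estimate is classical (due to Lubich); I sketch the operational-calculus argument behind it. The plan is to write both $K(\dt)g$ and $K(\dtd)g$ as contour integrals over the sector $\Sigma_\theta$ and to separate the total error into a symbol-consistency part and a data part, each of which produces exactly one of the two claimed bounds.

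First I would use the sectorial bound $\|K(z)\|\le M|z|^{-\mu}$ together with analyticity to represent the exact convolution on a Hankel-type contour $\Gamma\subset\Sigma_\theta$ (two rays $\arg z=\pm\vartheta$ with $\pi/2<\vartheta<\theta$, joined by a small arc). Since the Laplace transform of $g(t)=Ct^{\beta-1}$ is $\widehat g(z)=C\Gamma(\beta)z^{-\beta}$, this gives
\[
K(\dt)g(t)=\frac{C\Gamma(\beta)}{2\pi i}\int_\Gamma e^{zt}\,K(z)\,z^{-\beta}\,dz .
\]
For the quadrature I would use that the weights $\w_j$ are the Taylor coefficients of $K\big((1-\xi)/\tau\big)$ and that the samples $\{g(t_n)\}$ have generating function $G(\xi):=\sum_{n\ge0}g(t_n)\xi^n=C\tau^{\beta-1}\,\mathrm{Li}_{1-\beta}(\xi)$. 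By Cauchy's formula on a circle $|\xi|=\rho<1$ and the substitution $\xi=e^{-z\tau}$, the quantity $K(\dtd)g(t_n)$ acquires a contour representation of the same shape, with $K(z)$ replaced by $K(\eta(z))$, where $\eta(z):=(1-e^{-z\tau})/\tau$, and $C\Gamma(\beta)z^{-\beta}$ replaced by $\tau\,G(e^{-z\tau})$.

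Subtracting the two representations, I would split the integrand into two pieces. The symbol-consistency piece is controlled by the expansion $\eta(z)=z+O(\tau z^{2})$ together with a Cauchy estimate $\|K'(z)\|\lesssim|z|^{-\mu-1}$, which gives $\|K(z)-K(\eta(z))\|\lesssim\tau|z|^{1-\mu}$ on the part of $\Gamma$ where $|z|\tau\le c$; inserting this and scaling the contour with $t$ produces a contribution of order $\tau\,t^{\mu+\beta-2}$. The data piece comes from the mismatch $\tau\,G(e^{-z\tau})-C\Gamma(\beta)z^{-\beta}$: the polylogarithm asymptotics $\mathrm{Li}_{1-\beta}(e^{-w})=\Gamma(\beta)w^{-\beta}+c_{0}+c_{1}w+\cdots$ as $w=z\tau\to0$ show that the leading mismatch is the $z$-independent term of size $O(\tau^{\beta})$, and multiplying it by $\int_\Gamma e^{zt}\|K(\eta(z))\|\,|dz|\sim t^{\mu-1}$ yields a contribution of order $\tau^{\beta}t^{\mu-1}$.

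Adding the two, the total error is bounded by $c\,(\tau\,t^{\mu+\beta-2}+\tau^{\beta}t^{\mu-1})$; since $\tau\le t$ one term dominates in each regime, giving $\tau^{\beta}t^{\mu-1}$ for $0<\beta\le1$ and $\tau\,t^{\mu+\beta-2}$ for $\beta\ge1$ (the two coincide at $\beta=1$), which are exactly the stated bounds. The step I expect to be most delicate is the rigorous contour bookkeeping: one must keep both $z$ and $\eta(z)$ inside $\Sigma_\theta$ (forcing $\vartheta<\theta$ and a truncation of the contour near $|z|\sim1/\tau$, where the consistency estimate degenerates), balance the arc radius against $1/t$ so that the $z^{-\beta}$ singularity is integrated correctly in both regimes, and justify the polylogarithm asymptotics uniformly along the scaled contour. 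This analysis is precisely the content of Lubich's proof of Theorem~5.2 in \cite{Lub2}.
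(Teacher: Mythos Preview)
The paper does not prove this lemma at all: it is stated as a direct quotation of Theorem~5.2 in \cite{Lub2} and used as a black box in the error analysis. Your sketch is a faithful outline of Lubich's original contour-integral argument (and you yourself identify it as such), so there is nothing to compare---both you and the paper defer to the same source, the paper by citation and you by reproducing the main steps.
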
  

Finally, another useful property of the operator $\pp_{\tau}$ is the associativity. That is, let $K_1, K_2$ be operators as in Lemma \ref{lem:est_lubich}, and $k$ an analytic function, we have 

\begin{equation}
	\label{eq:asociatividad}
	K_1(\pp_{\tau})K_2(\pp_{\tau}) = (K_1 K_2)(\pp_{\tau}) \quad \text{ and } K_1(\pp_{\tau})(k*g) = (K_1(\pp_{\tau})k)*g. 
\end{equation}

\subsection{Fully discrete scheme}

Replacing the Riemann-Liouville derivative by its discrete version given by \eqref{conv_dis}, we can formulate the fully discrete problem as: find $U^n_{h} \in X_h$, with $n = \{1,\ldots,N \}$, such that  
\begin{equation} 
	\left\lbrace
	\begin{array}{rl}
		\dtd^{\alpha} U_h^n   +  A_h U_h^n & =  \dtd^{\alpha}v_h +  P_h \fmonio(U^n_h) \\
		U_h^0 & = v_h, 
	\end{array}
	\label{fully_AC_rl}
	\right.
\end{equation}

For the sake of the reader's convenience, we include a vectorial form of the fully discrete scheme. Let $\{ \varphi_i \}_{i=1,\ldots,\mathcal{N}}$ be the Lagrange nodal basis that generates $X_h$. Let $U^n \in \mathbb{R}^{\mathcal{N}}$, $n=0,\ldots,N$ be such that $U^n_h = \sum_{i=1}^{\mathcal{N}} U^n_i\varphi_i$, where $U_h^n$ denotes the solution of the fully discrete problem. 
Then, we may formulate \eqref{fully_AC_rl} in the following vectorial non-linear equation:
\begin{equation*} 
	M^{-1}\cdot(\w_0 M  +  K) \cdot U^n  =  \left(\sum_{j=0}^{n}\w_j \right)U^{0} - \sum^n_{j=1}\w_jU^{n-j} +  \fmonio(U^n).
	\label{full_vect}
\end{equation*}
Where $M$ and $K$ are the mass and stiffness matrices respectively. That is, $M_{i,j} = (\varphi_i,\varphi_j)$ and $K_{i,j} = \langle \varphi_i,\varphi_j \rangle_{H^s(\rn)}$.

The computation and assembly of the stiffness matrix in dimension greater than one is not a trivial task. Nevertheless, this problem for two-dimensional domains is treated in \cite{ABB}, where the authors provide a short MATLAB implementation to this end. Also we can mention \cite{glusa,KM} where some clever ways to reduce the complexity of the assembling process are analyzed. 

Since \eqref{fully_AC_rl} is not a linear equation, it is not clear a priori that there exist a solution. In that way, next result gives us existence and uniqueness for problem \eqref{fully_AC_rl}. 

\begin{theorem}
	\label{teo:solucion_fully_dis}
	There exist $\tau$ small enough, such that problem \eqref{fully_AC_rl} has  a unique solution $U^n_h \in X_h$ for all $n \in \{1,...,n\}$. 
\end{theorem}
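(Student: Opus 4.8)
The plan is to solve \eqref{fully_AC_rl} by induction on $n$ and, at each time level, to recast the determination of $U^n_h$ as a fixed-point problem in the finite-dimensional space $X_h$, to which Banach's theorem applies. Assume inductively that $U_h^0 = v_h$ and $U_h^1,\ldots,U_h^{n-1}$ have already been (uniquely) determined. First I would isolate the only genuinely unknown quantity at step $n$. Writing out the convolution quadrature $\dtd^{\alpha} U_h^n = \w_0 U_h^n + \sum_{j=1}^n \w_j U_h^{n-j}$ and recalling that $\w_0 = \tau^{-\alpha}$, the scheme \eqref{fully_AC_rl} becomes
\[
(\w_0 I + A_h)\, U_h^n = P_h \fmonio(U_h^n) + G^n,
\]
where $G^n := \dtd^{\alpha} v_h - \sum_{j=1}^n \w_j U_h^{n-j}$ collects only quantities already known from the previous steps. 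Since $A_h$ is symmetric and positive definite on $X_h$ (indeed $(A_h w, w) = \langle w, w \rangle_{H^s(\rn)} = |w|_{H^s(\rn)}^2 > 0$ for $w \ne 0$, with eigenvalues $\lambda_{h,k} > 0$), the operator $\w_0 I + A_h$ is invertible and, because each of its eigenvalues is at least $\w_0$, its inverse obeys $\|(\w_0 I + A_h)^{-1}\|_{\ele \to \ele} \le \w_0^{-1} = \tau^{\alpha}$.

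Next I would introduce the map $\Phi \colon X_h \to X_h$ defined by
\[
\Phi(w) := (\w_0 I + A_h)^{-1}\big(P_h \fmonio(w) + G^n\big),
\]
whose fixed points are exactly the solutions $U_h^n$ of the displayed equation, and conversely. To verify that $\Phi$ is a contraction, take $w_1, w_2 \in X_h$; the affine term $G^n$ cancels in the difference, so
\[
\|\Phi(w_1) - \Phi(w_2)\|_{\ele} \le \|(\w_0 I + A_h)^{-1}\|_{\ele\to\ele}\, \|P_h\|_{\ele\to\ele}\, \|\fmonio(w_1) - \fmonio(w_2)\|_{\ele}.
\]
Combining the resolvent bound above, the contractivity $\|P_h\|_{\ele\to\ele} \le 1$ of the $L^2(\W)$-projection, and the Lipschitz estimate $\|\fmonio(w_1) - \fmonio(w_2)\|_{\ele} \le B\,\|w_1 - w_2\|_{\ele}$ that follows from $|\fmonio'| \le B$ (hypothesis (H2)), I obtain
\[
\|\Phi(w_1) - \Phi(w_2)\|_{\ele} \le B\,\tau^{\alpha}\, \|w_1 - w_2\|_{\ele}.
\]
Choosing $\tau$ small enough that $B\,\tau^{\alpha} < 1$ makes $\Phi$ a contraction on $X_h$, which, being finite-dimensional, is complete in the $\ele$-norm. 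Banach's fixed-point theorem then yields a unique $U_h^n \in X_h$, and the induction starting from $U_h^0 = v_h$ produces the unique solution at every level $n$.

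I do not expect a serious obstacle here; the argument is a resolvent estimate combined with the global Lipschitz bound (H2). The two points that must be handled with care are, first, the positive-definiteness of $A_h$, which is what forces the resolvent norm down to $\tau^{\alpha}$ and supplies the decisive factor for contractivity, and second, the observation that the contraction constant $B\,\tau^{\alpha}$ is independent of $n$ and of the accumulated history $G^n$. This uniformity is precisely what allows a single smallness condition on $\tau$, namely $B\,\tau^{\alpha} < 1$, to guarantee solvability simultaneously at all time steps, yielding the $\tau$ asserted in the statement.
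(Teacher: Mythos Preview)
Your proof is correct and follows essentially the same route as the paper: an induction on $n$, recasting the step equation as a fixed point of $w \mapsto (\text{resolvent})(P_h\fmonio(w)+\text{history})$, and using the positivity of $A_h$ together with $|\fmonio'|\le B$ to obtain the contraction constant $B\tau^{\alpha}$. The only cosmetic difference is that the paper first divides by $\w_0$ so the resolvent $(I+\tau^{\alpha}A_h)^{-1}$ has norm $\le 1$ and the factor $\tau^{\alpha}$ comes from the source term, whereas you keep $(\w_0 I+A_h)^{-1}$ and extract $\tau^{\alpha}$ from the resolvent bound; the resulting estimate is identical.
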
     

\begin{proof}
	Recalling that $\w_0 = \tau^{-\alpha}$, dividing equation \eqref{fully_AC_rl} by $\w_0$ on both sides, we obtain 
	
	\begin{equation} 
		(I  +  \tau^{\alpha}A_h) U_h^n  =  \left(\sum_{j=0}^{n}\wtilde_j \right)U_h^{0} - \sum^n_{j=1}\wtilde_j U_h^{n-j} +  \tau^{\alpha}P_h g(U_h^n).
		\label{fully_disc}
	\end{equation}

	Observe that, since $(A_h w , w) > 0$ for all $w \in X_h$, it is true that 
	
	$$\|(I  +  \tau^{\alpha}A_h)^{-1}\|_{L^2(\W)} \leq 1,$$
	for all $\tau>0$.  
	Now, suppose by induction, that we have a solution $U^m_h \in X_h$ for all $m < n$, and define $T: X_h \to X_h$ as
	
	\begin{equation} 
		T(w)  = (I  +  \tau^{\alpha}A_h)^{-1} \Big( \left(\sum_{j=0}^{n}\wtilde_j \right)U_h^{0} - \sum^n_{j=1}\wtilde_j U_h^{n-j} +  \tau^{\alpha}P_h g(w) \Big).
		\label{fully_disc2}
	\end{equation}
	Applying a fixed point argument, if $T$ is a contraction over $X_h$, then problem \eqref{fully_AC_rl} will have a unique solution. To this end, suppose that we have $u$ and $w \in X_h$. Then, using $|\fmonio'|<B$ we have
	
	$$\|T(u)-T(w)\|_{\ele} = \|(I  +  \tau^{\alpha}A_h)^{-1} \Big(\tau^{\alpha}P_h(\fmonio(u)-\fmonio(w)) \Big)\|_{\ele} $$ 
	$$
	\leq \tau^{\alpha}\| \fmonio(u) - \fmonio(w)\|_{\ele} \leq B\tau^{\alpha}\|u-w\|_{\ele}. 
	$$
	Taking $\tau < B^{-\alpha}$, we have that $T$ is a contraction, and problem \eqref{fully_AC_rl} has a unique solution.  \end{proof}

\section{Error estimation} \label{sec:error}

For the sake of simplicity we are going to consider $\eps^2 = 1$ through this section.

\subsection{Error estimates for the semidiscrete scheme}
For the following linear problem
\begin{equation}
	\label{eq:weaklineal}
	\left\lbrace
	\begin{array}{rl}      
		(\ppa u,\phii) + \eps^2\langle u , \phii \rangle_{H^s(\rn)}  & = (f,\phii) \quad \forall \phii \in \hsT, \\
		u(0)  & = v  \text{ in } \Omega, 
	\end{array}
	\right.
\end{equation}
where $f:[0,T]\to L^2(\Omega)$ and the corresponding
semi-discrete approximation
\begin{equation} 
	\label{eq:semiDiscretoLineal}
	\left\lbrace
	\begin{array}{rl}
		( \ppa u_h  , w )  +  \langle u_h, w \rangle_{H^s(\rn)} & =  \left( f, w \right), \quad \forall w \in X_h, \\
		u_h(0) & = v_h ,
	\end{array}
	\right.
\end{equation}
with $v_h = P_h v$, we have the following two results \cite{ABB2}.

\begin{theorem} \label{teo:semi}
	Let $u$ and $u_h$ be solutions of \eqref{eq:weaklineal}
	and \eqref{eq:semiDiscretoLineal} respectively with $v \in \dot{H}^{q}(\W)$, $q \in [0,2]$, and right hand side $f \equiv 0$. Then it holds, 
	$$
	\|u - u_h \|_{L^2(\W)} + h^{\gamma}|u - u_h |_{H^s(\rn)} \leq  C h^{2 \gamma} t^{-\alpha \big(\frac{2-q}{2} \big)} \|v\|_{\ele}.
	$$
	where  $C = C(s,n)$ and $\gamma = \min \{ s, 1/2 - \eps\}$, with $\eps > 0$ arbitrary small.
\end{theorem}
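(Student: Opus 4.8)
The plan is to prove the error estimate for the homogeneous linear fractional problem using the spectral/eigenfunction representation and a comparison between the continuous and discrete solution operators. Since $f \equiv 0$, the solution is simply $u(t) = E^{\alpha}(t)v$ and the semidiscrete solution is $u_h(t) = E_h^{\alpha}(t) v_h = E_h^{\alpha}(t) P_h v$, where these operators are defined in \eqref{eq:operador_1} and \eqref{eq:operador_1_discreto}. The key object to estimate is therefore the difference $E^{\alpha}(t) v - E_h^{\alpha}(t) P_h v$ in both the $L^2(\W)$ norm and the $H^s(\rn)$-seminorm.

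\emph{First step.} I would introduce the Ritz projection $R_h : \hsT \to X_h$ associated to the bilinear form $\langle \cdot, \cdot \rangle_{H^s(\rn)}$, defined by $\langle R_h w - w, \chi \rangle_{H^s(\rn)} = 0$ for all $\chi \in X_h$, and recall the standard finite element approximation bounds for the fractional Laplacian from \cite{AcostaBorthagaray}: because the best Sobolev regularity available is $H^{s+1/2-\eps}(\rn)$ (see the Remark following \eqref{eq:eigenvalues}), one obtains $\|R_h w - w\|_{L^2(\W)} + h^{\gamma}|R_h w - w|_{H^s(\rn)} \le C h^{2\gamma}\|(-\Delta)^s w\|_{L^2(\W)}$ with $\gamma = \min\{s, 1/2-\eps\}$. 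This is exactly where the exponent $\gamma$ and the power $h^{2\gamma}$ enter the statement.

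\emph{Second step.} I would split the error in the classical Thom\'ee fashion as $u - u_h = (u - R_h u) + (R_h u - u_h) =: \rho + \vartheta$. The term $\rho$ is controlled directly by the Ritz bound together with the smoothing estimate $\|(-\Delta)^s E^{\alpha}(t) v\|_{L^2(\W)} = \|E^{\alpha}(t) v\|_{2,s} \le C t^{-\alpha(2-q)/2}\|v\|_{q,s}$, which is precisely \eqref{eq:semigEst1} with $p=2$; this accounts for the factor $t^{-\alpha(2-q)/2}$ in the claimed bound (note the statement specializes to $\|v\|_{\ele}$, i.e. $q=0$, giving $t^{-\alpha}$). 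For the discrete term $\vartheta \in X_h$, I would derive the error equation $\ppa \vartheta + A_h \vartheta = -P_h \ppa \rho$ (using Galerkin orthogonality of the Ritz projection to cancel the stiffness contribution), and then represent $\vartheta$ via Duhamel's formula using the discrete operators $E_h^{\alpha}, F_h^{\alpha}$, estimating it with the discrete analogues of Lemma \ref{estimacionSemigrupo}, which hold uniformly in $h$ because they depend only on the spectral structure shared by $A$ and $A_h$.

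\emph{Main obstacle.} The delicate point will be controlling the time-singular behavior near $t=0$ uniformly. Because the nonsmooth datum only lies in $\dot{H}^q(\W)$ and the Ritz/regularity estimates require $\|(-\Delta)^s E^{\alpha}(t)v\|_{L^2}$, which blows up like $t^{-\alpha(2-q)/2}$ as $t\to 0^+$, one must integrate this singularity against the kernel $F_h^{\alpha}(t-s)$ in the Duhamel term for $\vartheta$ and verify via the Beta-function bookkeeping (as in \eqref{eq:puntoFijo2.1}) that the temporal singularities combine to reproduce exactly the factor $t^{-\alpha(2-q)/2}$ without any logarithmic loss. Matching the spatial order $h^{2\gamma}$ with this sharp temporal weight, and confirming that the error constant $C$ depends only on $s$ and $n$ (and not on $t$, $h$, or the mesh through the quasi-uniformity already assumed), is the crux; this follows the linear-case argument of \cite{ABB2} and \cite{Jin}, so I would mirror their splitting and simply adapt the regularity inputs to the fractional-Laplacian Ritz estimate.
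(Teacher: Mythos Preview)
The paper does not actually prove Theorem~\ref{teo:semi}; it is quoted verbatim from \cite{ABB2} (see the sentence ``we have the following two results \cite{ABB2}'' immediately preceding the statement). Your outline is essentially the argument carried out in that reference: the Thom\'ee splitting $u-u_h=\rho+\vartheta$ with $\rho=u-R_hu$ controlled by the fractional Ritz estimate of \cite{AcostaBorthagaray} combined with the smoothing bound \eqref{eq:semigEst1}, and $\vartheta=R_hu-u_h$ handled through the discrete Duhamel representation and the uniform-in-$h$ analogues of Lemma~\ref{estimacionSemigrupo}. So your approach matches the cited proof.

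Two small points worth tightening. First, $\vartheta(0)=R_hv-P_hv$ is in general nonzero, so the Duhamel representation of $\vartheta$ carries an $E_h^\alpha(t)(R_hv-P_hv)$ contribution in addition to the forcing $-P_h\,\ppa\rho$; this term is harmless (it is bounded by $\|R_hv-P_hv\|_{L^2}\le Ch^{2\gamma}\|v\|_{2,s}$ after smoothing), but it should be recorded. Second, the forcing $P_h\,\ppa\rho$ is not in $L^1$ in time near $t=0$ for rough data, so the clean way in \cite{ABB2,Jin} is to integrate by parts in the Duhamel integral (or equivalently work with the operator-difference representation $(E^\alpha-E_h^\alpha P_h)v$ directly via the resolvent formula), rather than a naive Beta-function estimate; your ``main obstacle'' paragraph identifies the issue but the resolution is this integration-by-parts/resolvent trick, not just bookkeeping.
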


\begin{theorem}
	\label{estNoHom}
	Let $u$ and $u_h$ be as in Theorem \ref{teo:semi} with$f \in L^{\infty}( [0,T] ; L^2(\W) )$ and initial datum equal to zero. Then, there exists a positive constant $C = C(s,n)$ such that
	$$
	\|u - u_h \|_{L^2(\W)} \leq  C h^{2 \gamma } |\log h|^2 \|f\|_{ L^{\infty}( [0,T] ; L^2(\W) )},
	$$ 
	with $\gamma$ as in Theorem \ref{teo:semi}.
\end{theorem}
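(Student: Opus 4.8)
The plan is to represent the error through Duhamel's formula and reduce the problem to estimating, in the operator norm on $\ele$, the difference of the continuous and discrete solution kernels $F^\alpha$ and $F_h^\alpha$, integrated against the singular time weight. Since the initial datum vanishes, the mild--solution formula \eqref{eq:duhamelCaputo} and its discrete counterpart (built from \eqref{eq:operador_2_discreto}) give
\begin{equation*}
u(t) = \int_0^t F^\alpha(t-\sigma) f(\sigma)\,d\sigma, \qquad u_h(t) = \int_0^t F_h^\alpha(t-\sigma) P_h f(\sigma)\,d\sigma,
\end{equation*}
so that, writing $G_h(\sigma) := F^\alpha(\sigma) - F_h^\alpha(\sigma)P_h$,
\begin{equation*}
\|u(t) - u_h(t)\|_{\ele} \le \|f\|_{L^\infty([0,T];\ele)} \int_0^t \|G_h(\sigma)\|_{\mathcal{L}(\ele)}\,d\sigma .
\end{equation*}
Thus everything reduces to bounding $\int_0^t \|G_h(\sigma)\|_{\mathcal{L}(\ele)}\,d\sigma$ by $C h^{2\gamma}|\log h|^2$.

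Next I would establish two complementary bounds for the kernel error $G_h(\sigma)$ acting on $\ele$. The first is a plain stability estimate, with no gain in $h$: bounding $F^\alpha(\sigma)$ and $F_h^\alpha(\sigma)P_h$ separately through the smoothing estimate \eqref{eq:semigEst2} of Lemma \ref{estimacionSemigrupo} (with $p=q=0$) and the triangle inequality yields $\|G_h(\sigma)\|_{\mathcal{L}(\ele)} \le C\sigma^{\alpha-1}$. The second is a full--order but time--singular estimate, analogous to Theorem \ref{teo:semi} for the $E^\alpha$ kernel but transferred to $F^\alpha$; since the symbol of $F^\alpha$ is more singular than that of $E^\alpha$ by one order in time, the corresponding spatial error estimate on $L^2$ data should read $\|G_h(\sigma)\|_{\mathcal{L}(\ele)} \le C h^{2\gamma}|\log h|\,\sigma^{-1}$, where the logarithmic factor is inherited from the finite--element approximation of the fractional Laplacian under its limited Sobolev regularity $H^{s+1/2-\eps}(\rn)$. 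This second estimate can be obtained by the same spectral arguments underlying Theorem \ref{teo:semi}, applied to the resolvent $(z^\alpha + A)^{-1}$ whose inverse Laplace transform is $F^\alpha$, together with the elliptic error bounds for $A^{-1} - A_h^{-1}P_h$.

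Finally, I would combine the two bounds by splitting the time integral at an $h$-dependent threshold $\delta = \delta(h)$. On $(0,\delta)$ I use the integrable stability bound, contributing $C\delta^\alpha$; on $(\delta,t)$ I use the full--order bound, contributing $C h^{2\gamma}|\log h|\log(t/\delta)$. Choosing $\delta \sim h^{2\gamma/\alpha}$ balances the first contribution against $h^{2\gamma}$ and turns $\log(t/\delta)$ into a factor of order $|\log h|$, so that the two logarithms multiply and the total is $C h^{2\gamma}|\log h|^2$, as claimed. The main obstacle is the second kernel estimate: transferring the homogeneous finite--element error analysis of Theorem \ref{teo:semi} from $E^\alpha$ to the more singular operator $F^\alpha$ while keeping the sharp time exponent $\sigma^{-1}$ and correctly accounting for the single logarithmic factor coming from the fractional--Laplacian discretization; once this estimate is in hand, the time--integral splitting that produces the second logarithm is routine.
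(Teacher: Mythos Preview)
The paper does not actually prove this theorem; it is imported directly from \cite{ABB2} (both Theorem~\ref{teo:semi} and Theorem~\ref{estNoHom} are merely quoted). Your outline is precisely the argument used in that reference: the Duhamel representation, the pair of kernel bounds
\[
\|G_h(\sigma)\|_{\mathcal L(\ele)}\le C\sigma^{\alpha-1}
\qquad\text{and}\qquad
\|G_h(\sigma)\|_{\mathcal L(\ele)}\le C h^{2\gamma}|\log h|\,\sigma^{-1},
\]
and the splitting of $\int_0^t$ at $\delta\sim h^{2\gamma/\alpha}$, which manufactures the second logarithm. Your diagnosis of the two logarithmic factors (one from the borderline elliptic regularity of the fractional Laplacian, one from the non-integrable $\sigma^{-1}$) is correct.

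The only place where your wording is slightly loose is the derivation of the second kernel bound. A purely spectral argument in the style of Lemma~\ref{estimacionSemigrupo} does not give the sharp $\sigma^{-1}$ behaviour with the correct $|\log h|$ dependence; in \cite{ABB2} this estimate is obtained through the Dunford--Laplace contour representation $F^\alpha(\sigma)=\tfrac{1}{2\pi i}\int_\Gamma e^{z\sigma}(z^\alpha+A)^{-1}\,dz$ together with the uniform resolvent error bound $\|(z^\alpha+A)^{-1}-(z^\alpha+A_h)^{-1}P_h\|_{\mathcal L(\ele)}\le C h^{2\gamma}|\log h|$ along $\Gamma$, which in turn rests on the elliptic estimate for $A^{-1}-A_h^{-1}P_h$. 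You already gesture at this (``applied to the resolvent $(z^\alpha+A)^{-1}$''), so this is a matter of emphasis rather than a gap.
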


From this, we can estimate the error for the semi-discrete scheme \eqref{semiDiscreto}. 

\begin{theorem}
	\label{teo:errorAC}
	Let $u$ and $u_h$ be the the exact and the semi-discrete solution of \eqref{eq:weakAC} and \eqref{semiDiscreto} respectively. And let $v \in \dot{H}^{q}(\W)$ with $q \in [0,2]$ and $v_h = P_hv$ with $\|v\|_{q,s}\leq R$. Then there exist a positive constant $C=C( R , T )$ such that   
	
	\begin{equation}
		\|u(t) - u_h(t)\|_{L^2(\Omega)} \leq Ch^{2\gamma}(t^{-\alpha \big(\frac{2-q}{2} \big)} + |\log{h}|^2 ) \,\, , t\in (0,T].
		\label{eq:errorCAC}
	\end{equation}
	With $\gamma$ as in Theorem \ref{teo:semi}.
	
\end{theorem}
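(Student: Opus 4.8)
The plan is to reduce the nonlinear error analysis to the linear estimates of Theorems~\ref{teo:semi} and~\ref{estNoHom} by freezing the nonlinearity along the exact trajectory. Writing the mild formulation \eqref{eq:duhamelCaputo} for $u$ and the analogous discrete Duhamel identity for $u_h$ (with $\eps^2=1$, nonlinearity $\fmonio$, and $v_h=P_hv$), I would introduce the auxiliary function
\begin{equation*}
w_h(t):=E_h^{\alpha}(t)v_h+\int_0^t F_h^{\alpha}(t-s)P_h\fmonio(u(s))\,ds,
\end{equation*}
which is precisely the semi-discrete solution of the \emph{linear} problem \eqref{eq:semiDiscretoLineal} with datum $v_h=P_hv$ and (now \emph{known}) source $f(s)=\fmonio(u(s))$, while $u$ itself is the corresponding continuous linear solution \eqref{eq:weaklineal} with datum $v$ and the same source. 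The error then splits as $u-u_h=(u-w_h)+(w_h-u_h)$, the first part handled by linear a priori estimates and the second by a Gronwall argument.

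For $u-w_h$ I would use the linearity of \eqref{eq:weaklineal}--\eqref{eq:semiDiscretoLineal} to separate the contribution of the initial datum (source zero) from that of the source (datum zero). Theorem~\ref{teo:semi} bounds the first by $Ch^{2\gamma}t^{-\alpha(2-q)/2}\|v\|_{\ele}$, and since $\|v\|_{\ele}\le C\|v\|_{q,s}\le CR$ this is of the asserted form. For the source part, hypothesis \eqref{H2} gives $\|\fmonio(u(s))\|_{\ele}\le B|\W|^{1/2}$ uniformly in $s$, hence $\fmonio(u)\in L^{\infty}([0,T];\ele)$, and Theorem~\ref{estNoHom} yields the bound $Ch^{2\gamma}|\log h|^2$. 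Adding the two,
\begin{equation*}
\|u(t)-w_h(t)\|_{\ele}\le Ch^{2\gamma}\bigl(t^{-\alpha(2-q)/2}+|\log h|^2\bigr),\qquad C=C(R,T).
\end{equation*}

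For the second part the initial-datum terms cancel, leaving $w_h-u_h=\int_0^t F_h^{\alpha}(t-s)P_h\bigl(\fmonio(u(s))-\fmonio(u_h(s))\bigr)\,ds$. I would invoke the discrete analogue of the smoothing estimate \eqref{eq:semigEst2} with $p=q=0$, namely $\|F_h^{\alpha}(t)\psi\|_{\ele}\le Ct^{\alpha-1}\|\psi\|_{\ele}$ with $C$ \emph{independent of $h$} (proved exactly as Lemma~\ref{estimacionSemigrupo} from the spectral form \eqref{eq:operador_2_discreto}), together with $\|P_h\|_{\ele\to\ele}\le1$ and the Lipschitz bound $|\fmonio'|\le B$ from \eqref{H2}, to obtain
\begin{equation*}
\|w_h(t)-u_h(t)\|_{\ele}\le CB\int_0^t (t-s)^{\alpha-1}\|u(s)-u_h(s)\|_{\ele}\,ds.
\end{equation*}
Setting $e(t):=\|u(t)-u_h(t)\|_{\ele}$ and combining the two bounds gives the Volterra inequality
\begin{equation*}
e(t)\le Ch^{2\gamma}\bigl(t^{-\alpha(2-q)/2}+|\log h|^2\bigr)+CB\int_0^t (t-s)^{\alpha-1}e(s)\,ds,
\end{equation*}
to which I would apply the fractional Gronwall Lemma~\ref{gronwall} (with $\beta=\alpha$) to each forcing term separately, adding the outcomes by positivity of the kernel: the singular term is reproduced with exponent $1-\alpha(2-q)/2$ and the logarithmic term with exponent $1$, yielding exactly \eqref{eq:errorCAC}.

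I expect the main obstacle to be twofold. First, one must secure the discrete smoothing estimate for $F_h^{\alpha}$ with a constant \emph{uniform} in the mesh size, so that the Gronwall constant does not blow up as $h\to0$; this is routine but essential. Second, applying Lemma~\ref{gronwall} to the singular forcing requires the exponent $1-\alpha(2-q)/2$ to be positive, i.e.\ $\alpha(2-q)<2$, which holds throughout the admissible range $\alpha\in(0,1]$, $q\in[0,2]$ except at the corner $\alpha=1$, $q=0$, where the $t^{-1}$ singularity would need a separate treatment.
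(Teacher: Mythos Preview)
Your proposal is correct and follows essentially the same route as the paper: the auxiliary function $w_h$ you introduce simply packages the paper's three-term splitting $(E^{\alpha}-E^{\alpha}_hP_h)v + \int F^{\alpha}_hP_h(\fmonio(u)-\fmonio(u_h)) + \int(F^{\alpha}-F^{\alpha}_hP_h)\fmonio(u)$ into $(u-w_h)+(w_h-u_h)$, after which Theorems~\ref{teo:semi}, \ref{estNoHom}, the discrete $F^{\alpha}_h$-smoothing bound, and Lemma~\ref{gronwall} are applied exactly as you describe. Your remarks on the uniform-in-$h$ constant for $F^{\alpha}_h$ and on the borderline case $\alpha=1$, $q=0$ are pertinent technical points that the paper's proof passes over silently.
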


\begin{proof}
	
	We can write the solution and its semi-discrete approximation as 
	$u = E^{\alpha}(t)v + \int^{t}_{0} F^{\alpha}(t-s)\fmonio(u(s)) \, ds,$
	and
	$u_h = E^{\alpha}_h(t)v_h + \int^{t}_{0} F^{\alpha}_h(t-s)\fmonio(u_h(s)) \, ds,$
	respectively. Then, defining $e = u - u_h $, we have
	
	$$e(t) = \big(E^{\alpha}-E^{\alpha}_hP_h\big)(t)v + \int^{t}_{0} F^{\alpha}_h(t-s)P_h\big( \,\fmonio(u(s)) - \fmonio(u_h(s))  \,\big)\,ds$$
	
	$$+\int^{t}_{0} \big(F^{\alpha}-F^{\alpha}_hP_h\big)(t-s)\fmonio(u(s))\,ds. $$
	Using Theorem \ref{teo:semi} in the first term; $|\fmonio|,|\fmonio'| \leq B$, and \eqref{eq:semigEst2} in the second term; Theorem \ref{estNoHom} with $f = \fmonio(u)$ and $|\fmonio|<B$ in the last term, we have 
	
	$$\|e(t)\|_{L^2(\Omega)} \leq CRt^{-\alpha \big(\frac{2-q}{2} \big)}h^{2\gamma} + CB\int^{t}_0 (t-s)^{\alpha-1} \|e(s)\|_{L^2(\Omega)}\,ds + Ch^{2\gamma}|\log h|^2.$$
	Then, applying Lemma \ref{gronwall} we derive \eqref{eq:errorCAC}. \end{proof}

\subsection{Error estimation for the fully discrete scheme}

Consider the discrete problem of find $V^n_h \in X_h$, $n \in \{1,...,N\}$, $V^0_h = 0$ such that  

\begin{equation}
	\label{eq:duhamel_fuente}
	\sum^n_{j=0}\w_j V_h^{n-j}  = -  A_h V_h^n +  f^n_h, 
\end{equation}
with $f^n_h \in X_h$, for all $n \in \{1,...,N\}$. Recalling that $\w_0 = \tau^{-\alpha}$, and defining $E = (I + \tau^{\alpha} A_h)^{-1}$, we can rewrite \eqref{eq:duhamel_fuente} as

\begin{equation}
	\label{eq:duhamel_fuente2}
	V_h^n    = E \Big( \sum^n_{j=1}-\tau^{\alpha}\w_j V_h^{n-j}+  \tau^{\alpha}f^n_h \Big). 
\end{equation}
If we define $\{\wtilde_n\}_{n \in \N}$ as the coefficients of the series expansion of $(1 - \xi)^{\alpha}$, from the definition  of $\{\w_n\}_{n \in \N}$ we have $\wtilde_n = \tau^{\alpha}\w_n$ for all $n \in \N$. And we can write $V^{n}_h$ as a function of $f^{n}_h$ in a recursive expression 

\begin{equation}
	\label{eq:duhamel_fuente3}
	V_h^n    = \sum^{n}_{j = 1} E_{n - j}  f^j_h, \quad n>0, 
\end{equation}
with $E_n$ recursively defined as

\begin{equation}
	\label{eq:operador_duhamel_rec}
	E_{0} =  \tau^{\alpha}E, \quad E_n = E \Big( \sum^{n-1}_{j=0} -\wtilde_{n-j} E_{j} \Big).
\end{equation}


As we have observed in the proof of Theorem \ref{teo:solucion_fully_dis}, we have

$$\|E\|_{L^2(\W)} = \|(I + \tau^{\alpha} A_h)^{-1}\|_{L^2(\W)} < 1.$$Then, from \eqref{eq:operador_duhamel_rec}, and recalling that $-\wtilde_j>0$ for $j \geq 1$,  we have

\begin{equation}
	\label{eq:estimacion_duhamel}
	\|E_{0}\|_{L^2(\W)} \leq  \tau^{\alpha}, \quad \|E_n\|_{L^2(\W)} \leq  \sum^{n-1}_{j=0} -\wtilde_{n-j} \|E_{j}\|_{L^2(\W)}.
\end{equation}
Defining the sequence
\begin{equation}
	\label{eq:suc_importante}
	c_0 = 1, \quad c_n = \sum^{n-1}_{j=0} -\wtilde_{n-j} c_{j}, 
\end{equation}
it is possible to check that 

\begin{equation}
	\label{eq:est_operador_error}
	\|E_n\|_{L^2(\W)} \leq \tau^{\alpha} c_n.
\end{equation}  

In order to bound the error, it will be useful to know about the asymptotic behavior of $\{c_n\}_{n \in \N}$. This is analyzed in the next lemma proved in  Appendix \ref{appendix_AC}.

\begin{lemma}
	\label{lem:lemma_seq}
	Let $\{\wtilde_n\}_{n \in \mathbb{N}_0}$ be the coefficients of the power series expansion of $(1-\xi)^{\alpha}$, with $\alpha \in (0,1)$, and $\{c_n\}_{n \in \mathbb{N}_0}$ the sequence recursively defined in \eqref{eq:suc_importante}. Then, $c_n \in O(n^{\alpha - 1})$. 
	
\end{lemma}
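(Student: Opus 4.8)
The plan is to pass to generating functions and identify the sequence $\{c_n\}$ explicitly. First I would recall that the coefficients $\wtilde_n$ of $(1-\xi)^\alpha$ are the binomial coefficients $\wtilde_n = \binom{\alpha}{n}(-1)^n$; in particular $\wtilde_0 = 1$, and for $n \geq 1$ a sign count of the factors of $\binom{\alpha}{n}$ (one positive factor $\alpha$ and $n-1$ negative factors, since $\alpha \in (0,1)$) confirms $\wtilde_n < 0$, consistent with the positivity $-\wtilde_j > 0$ already used in \eqref{eq:estimacion_duhamel}.

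The key observation is that the recursion \eqref{eq:suc_importante} says exactly that $\{c_n\}$ is the convolution inverse of $\{\wtilde_n\}$. Indeed, since $\wtilde_0 = 1$, the identity $c_n = \sum_{j=0}^{n-1}(-\wtilde_{n-j})c_j$ can be rewritten as $\sum_{j=0}^{n} \wtilde_{n-j}\, c_j = 0$ for every $n \geq 1$, while for $n = 0$ the same sum equals $\wtilde_0 c_0 = 1$; that is, $\sum_{j=0}^{n}\wtilde_{n-j}c_j = \delta_{n,0}$. Setting $W(\xi) = \sum_n \wtilde_n \xi^n = (1-\xi)^\alpha$ and $C(\xi) = \sum_n c_n \xi^n$ as formal power series, this is precisely the statement $W(\xi)\,C(\xi) = 1$, whence $C(\xi) = (1-\xi)^{-\alpha}$.

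Reading off the coefficients of $(1-\xi)^{-\alpha}$ then yields the closed form
$$c_n = \binom{-\alpha}{n}(-1)^n = \frac{\alpha(\alpha+1)\cdots(\alpha+n-1)}{n!} = \frac{\Gamma(n+\alpha)}{\Gamma(\alpha)\,\Gamma(n+1)}.$$
Finally I would invoke the standard asymptotic for ratios of Gamma functions, $\Gamma(n+\alpha)/\Gamma(n+1) \sim n^{\alpha-1}$ as $n \to \infty$, to conclude $c_n \sim n^{\alpha-1}/\Gamma(\alpha)$, which in particular gives $c_n \in O(n^{\alpha-1})$ and completes the proof.

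As for obstacles, there is essentially no analytic difficulty: the manipulation $W(\xi)C(\xi)=1$ is purely algebraic at the level of formal power series and requires no convergence argument, and the only imported fact is the Gamma-ratio asymptotic. The single point demanding care is the bookkeeping that turns \eqref{eq:suc_importante} into the clean identity $\sum_{j=0}^{n}\wtilde_{n-j}c_j = \delta_{n,0}$, together with the verification that the two series in play are genuinely $(1-\xi)^{\pm\alpha}$. An alternative, should one wish to avoid generating functions, is a direct induction establishing $c_n \le C\,n^{\alpha-1}$ from the positivity of $-\wtilde_j$ and the asymptotics $-\wtilde_j \sim \alpha\, j^{-\alpha-1}/\Gamma(1-\alpha)$ via a summation estimate on the convolution; but the generating-function route is shorter and delivers the sharp constant.
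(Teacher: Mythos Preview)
Your argument is correct and is essentially the same as the paper's: both pass to generating functions, identify $\sum_n c_n\xi^n=(1-\xi)^{-\alpha}$ from the recursion, and then invoke the standard Gamma-ratio asymptotics for $(-1)^n\binom{-\alpha}{n}$. Your route via the convolution-inverse identity $\sum_{j=0}^n\wtilde_{n-j}c_j=\delta_{n,0}$ is in fact a slightly cleaner packaging of the paper's equality $f(\xi)\,g(\xi)/\xi=(f(\xi)-1)/\xi$, which unwinds to the same relation $f(\xi)(1-\xi)^{\alpha}=1$.
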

\begin{theorem}
	\label{teo:error_fully_alfa}
	Let $u$ and $U^n_h = U_h(t_n)$ be the solution of \eqref{eq:AC} and \eqref{fully_AC_rl} respectively Consider $v \in \dot{H}^{q}(\W)$ for some $q \in (0,2]$ and $v_h = P_hv$ with $\|v\|_{q,s}\leq R$. Then, if $\tau < \tau_0$, for a sufficiently small $\tau_0$ there exist a positive constant $C=C( R , T , \alpha , q)$ such that   
	
	\begin{equation}
		\|u(t_n) - U_h(t_n)\|_{L^2(\Omega)} \leq Ch^{2\gamma}(t_n^{-\alpha \big(\frac{2-q}{2} \big)} + |\log{h}|^2 ) + C\tau t_n^{-1 + \alpha \frac{q}{2} }, 
		\label{eq:errorACF_alpha}
	\end{equation}
	$$ t_n\in (0,T].$$
	With $\gamma$ as in Theorem \ref{teo:semi}. 
\end{theorem}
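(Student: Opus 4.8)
The plan is to insert the continuous-in-time semi-discrete solution $u_h$ of \eqref{semiDiscreto} as an intermediate quantity and to split
$$ u(t_n) - U_h(t_n) = \big(u(t_n) - u_h(t_n)\big) + \big(u_h(t_n) - U_h^n\big). $$
The first difference is exactly the semi-discrete error already controlled in Theorem \ref{teo:errorAC}, and it accounts for the spatial contribution $Ch^{2\gamma}\big(t_n^{-\alpha(2-q)/2} + |\log h|^2\big)$ appearing in \eqref{eq:errorACF_alpha}. Everything then reduces to bounding the pure time-stepping error $e^n := u_h(t_n) - U_h^n$ by $C\tau\, t_n^{-1+\alpha q/2}$, observing that $e^0 = 0$ because both schemes start from $v_h = P_h v$.

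To estimate $e^n$ I would first write an error equation in Riemann--Liouville form. Evaluating \eqref{eq:semiDiscreto2rl} at $t_n$, subtracting the fully discrete scheme \eqref{fully_AC_rl}, and setting $\rho^n := \big(\dtd^{\alpha} - \rl\big)(u_h - v_h)(t_n)$ for the truncation error produced when the exact Riemann--Liouville derivative is replaced by the convolution quadrature $\dtd^{\alpha}$, one obtains
$$ \dtd^{\alpha} e^n + A_h e^n = P_h\big(\fmonio(u_h(t_n)) - \fmonio(U_h^n)\big) + \rho^n, \qquad e^0 = 0. $$
This is precisely of the form \eqref{eq:duhamel_fuente}, so the discrete Duhamel representation \eqref{eq:duhamel_fuente3} yields $e^n = \sum_{j=1}^n E_{n-j}\big[P_h(\fmonio(u_h(t_j)) - \fmonio(U_h^j)) + \rho^j\big]$. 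Taking $\ele$ norms, using the operator bound \eqref{eq:est_operador_error} together with the decay $c_m \in O(m^{\alpha-1})$ of Lemma \ref{lem:lemma_seq}, and the Lipschitz estimate $\|\fmonio(u_h(t_j)) - \fmonio(U_h^j)\|_{\ele} \le B\|e^j\|_{\ele}$ coming from $|\fmonio'|\le B$, I arrive at a discrete inequality
$$ \|e^n\|_{\ele} \le C\tau^{\alpha}\sum_{j=1}^{n} c_{n-j}\,\|e^j\|_{\ele} + C\tau^{\alpha}\sum_{j=1}^{n} c_{n-j}\,\|\rho^j\|_{\ele}, $$
where the diagonal term $j=n$ of the first sum carries the harmless factor $\tau^{\alpha}c_0 = \tau^{\alpha}$ and is absorbed into the left-hand side once $\tau<\tau_0$.

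The heart of the argument, and the step I expect to be the main obstacle, is the estimation of the truncation term $\rho^j$. I would split $u_h - v_h$ into its homogeneous part $E_h^{\alpha}(t)v_h - v_h$ and its Duhamel part $\int_0^t F_h^{\alpha}(t-s)P_h\fmonio(u_h(s))\,ds$, and apply Lemma \ref{lem:est_lubich} to each. For the homogeneous part the relevant operator symbol is built from the resolvent $(z^{\alpha}+A_h)^{-1}$; the sectorial resolvent estimate, together with the smoothing gained from $v \in \dot{H}^q(\W)$, should fix the exponent $\mu = \alpha q/2$ in the hypothesis $\|K(z)\|\le M|z|^{-\mu}$ \textbf{uniformly in $h$ and $\tau$}, and with the profile $\beta = 1$ the lemma then delivers the singular weight $\tau\, t_j^{\alpha q/2-1}$. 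For the Duhamel part the associativity property \eqref{eq:asociatividad} converts the convolution-quadrature error into a convolution of the kernel error against $\fmonio(u_h)$, which is bounded by $B$ and whose time derivative inherits the regularity $t^{\alpha q/2-1}$ from Lemma \ref{lem:derivada_operador} (see also Lemma \ref{lem:estimacion_derivada}); this yields a contribution of the same or milder order. The delicate point is to reconcile the strong temporal singularity near $t=0$ imposed by the low regularity of the datum with the two regimes of Lemma \ref{lem:est_lubich}, while keeping every constant free of $h$, $\tau$ and $n$.

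Finally, inserting the bound $\|\rho^j\|_{\ele}\le C\tau\, t_j^{\alpha q/2-1}$ into the discrete inequality, I would close the estimate by a discrete Gronwall argument, the summation analogue of Lemma \ref{gronwall} for weakly singular kernels. One uses $\tau^{\alpha}c_{m}\sim \tau\, t_m^{\alpha-1}$, so that the kernel $\tau^{\alpha}\sum_j c_{n-j}(\cdot)$ behaves like the Riemann sum of $\int_0^{t_n}(t_n-s)^{\alpha-1}(\cdot)\,ds$; evaluating the resulting Beta-type convolution $\int_0^{t_n}(t_n-s)^{\alpha-1}s^{\alpha q/2-1}\,ds = C\,t_n^{\alpha-1+\alpha q/2}$ and bounding $t_n^{\alpha-1+\alpha q/2}\le T^{\alpha}t_n^{-1+\alpha q/2}$ reproduces the claimed temporal weight and gives $\|e^n\|_{\ele}\le C\tau\, t_n^{-1+\alpha q/2}$. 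Adding this to the semi-discrete contribution of Theorem \ref{teo:errorAC} yields \eqref{eq:errorACF_alpha}.
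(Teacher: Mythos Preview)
Your overall architecture coincides with the paper's: split $u-U_h^n$ through the semi-discrete solution $u_h$, invoke Theorem~\ref{teo:errorAC} for the spatial part, and for $e^n=u_h(t_n)-U_h^n$ combine the discrete Duhamel representation \eqref{eq:duhamel_fuente3}--\eqref{eq:est_operador_error}, Lemma~\ref{lem:lemma_seq}, the Lipschitz bound on $g$, the absorption of the diagonal term for $\tau<\tau_0$, and the weakly singular discrete Gronwall Lemma~\ref{lem:gronwall_discreto}. All of that is exactly what the paper does.

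The one place where your plan diverges, and where it would not go through as written, is the treatment of the consistency term. You propose to bound $\|\rho^j\|_{\ele}$ \emph{pointwise} by $C\tau\,t_j^{\alpha q/2-1}$ and then feed this into $\tau^{\alpha}\sum_j c_{n-j}\|\rho^j\|$. But $\rho^j=(\dtd^{\alpha}-\rl)(u_h-v_h)(t_j)$ carries the symbol $K(z)=z^{\alpha}$, for which $\mu=-\alpha$ in Lemma~\ref{lem:est_lubich}; writing $u_h-v_h=1*\partial_t u_h$ and invoking the lemma produces a kernel of order $t^{-\alpha-1}$, which is not integrable, so no pointwise estimate of the claimed form is available. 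Your remark that ``the relevant operator symbol is built from the resolvent'' is the right instinct, but in your set-up the resolvent sits in $E_{n-j}$, not in $\rho^j$; to exploit it you must keep $E_{n-j}$ and $\rho^j$ together.

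This is precisely how the paper organises the argument. Instead of isolating $\rho^j$, it writes $u_h=G(\partial_t)\partial_t^{\alpha}v_h+G(\partial_t)P_hg(u_h)$ and $U_h=G(\dtd)\dtd^{\alpha}v_h+G(\dtd)P_hg(U_h)$ with $G(z)=(z^{\alpha}+A_h)^{-1}$, and splits
\[
e_h=\underbrace{(G(\partial_t)\partial_t^{\alpha}-G(\dtd)\dtd^{\alpha})v_h}_{(i)}+\underbrace{(G(\partial_t)-G(\dtd))P_hg(u_h)}_{(ii)}+\underbrace{G(\dtd)P_h(g(u_h)-g(U_h))}_{(iii)}.
\]
In your notation $(i)+(ii)=\sum_{j}E_{n-j}\rho^j$, so it is the \emph{combination} that is estimated, never $\rho^j$ alone. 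Term $(i)$ is handled by Lemma~\ref{lem:est_lubich} with a symbol of the type $z^{\alpha}G(z)$ paired with the smoothing $v\in\dot H^q$, giving $\|(i)\|\le C\tau\,t_n^{-1+\alpha q/2}$. Term $(ii)$ is split via associativity \eqref{eq:asociatividad} as $(G(\partial_t)-G(\dtd))g(v_h)+\big((G(\partial_t)-G(\dtd))1\big)*\partial_t g(u_h)$; Lemma~\ref{lem:est_lubich} with $\mu=\alpha$, $\beta=1$ and the bound $\|\partial_t u_h(t)\|\le Ct^{\alpha/2-1}$ from Lemma~\ref{lem:estimacion_derivada} give $C\tau\,t_n^{\alpha-1}$ and $C\tau\,t_n^{3\alpha/2-1}$, both dominated by $C(T)\tau\,t_n^{-1+\alpha q/2}$. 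Term $(iii)$ is exactly your nonlinear feedback sum and is closed by Lemma~\ref{lem:gronwall_discreto} as you describe. So your proof is completed by replacing the pointwise estimate of $\rho^j$ with the direct resolvent-level estimate of $(i)+(ii)$.
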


\begin{proof}
	In view of Theorem \ref{teo:errorAC}, we only need to  estimate  $\|u_h(t_n) - U_h(t_n)\|_{L^2(\W)}$, with $u_h$ the semi-discrete solution. Considering the sector $\Sigma_{\theta} := \{ z \in \mathbb{C} \text{ such that } z \not = 0, |arg(z)| \leq \theta \}$, it can be seen that the function $G(z) := (z^{\alpha}I + A_h)^{-1}$ is analytic in $\Sigma_{\theta}$ with $\theta \in (\pi/2 , \pi)$. Then, from the semi-discrete and fully discrete scheme, we have
	
	$$u_h = G(\partial_t) \partial_t^{\alpha}v_h + G(\partial_t)P_h\fmonio(u_h), $$ 
	and 
	$$U_h = G(\overline{\partial_{\tau}}) \overline{\partial_{\tau}}^{\alpha}v_h + G(\overline{\partial_{\tau}})P_h\fmonio(U_h).$$
	Subtracting both expressions we obtain an equation for $e_h := u_h - U_h$, 
	
	\begin{equation}
		\label{eq:error_formula}
		e_h = (G(\partial_t) \partial_t^{\alpha} - G(\overline{\partial_{\tau}}) \overline{\partial_{\tau}}^{\alpha}P_h)v + G(\partial_t)P_h\fmonio(u_h)
		- G(\overline{\partial_{\tau}})P_h\fmonio(U_h) = 
	\end{equation}
	$$(G(\partial_t) \partial_t^{\alpha} - G(\overline{\partial_{\tau}}) \overline{\partial_{\tau}}^{\alpha}P_h)v_h + 
	(G(\partial_t) - G(\overline{\partial_{\tau}}) )P_h\fmonio(u_h) 
	+ G(\overline{\partial_{\tau}})P_h(\fmonio(u_h)-\fmonio(U_h))
	$$
	$$= (i) + (ii) + (iii).$$
	
	The norm of the first term $(i)$ can be estimated arguing as in \cite[Theorem 5.3]{ABB2} (see also \cite[Theorem 4.2.8]{tesis_yo}). Since $v \in \dot{H}^{q}(\W)$ we obtain
	
	$$\|(i)\|_{L^2(\W)} \leq Ct_n^{-1 + \alpha \frac{q}{2}}\tau\|v_h\|_{H^q(\W)} \leq Ct_n^{-1 + \alpha \frac{q}{2}}\tau,$$
	with $C = C(R)$. For the second term, using property \eqref{eq:asociatividad}, we can split $(ii)$ as follow 
	
	$$(ii) = \big(G(\partial_t) - G(\overline{\partial_{\tau}}) \big)\big(\, P_h\fmonio(u_h(0)) + (1*P_h\partial_t\fmonio(u_h(t_n))\,\big)  $$
	$$
	= \big(G(\partial_t) - G(\overline{\partial_{\tau}}) \big)P_h\fmonio(u_h(0)) + (\big(G(\partial_t) - G(\overline{\partial_{\tau}}) \big)1)*P_h\partial_t\fmonio(u_h(t_n))
	$$
	$$
	= I + II.
	$$
	Using Lemma \ref{lem:est_lubich} with $\mu = \alpha, \beta =1$, along with the fact that $|\fmonio| < B$, we can estimate 
	
	$$\|I\|_{\ele} \leq Ct_n^{\alpha-1}\tau.$$
	
	On the other hand, noticing that Lemma \ref{lem:estimacion_derivada} can be easily extended to $\partial_t u_h$, in order to get $\|\partial_t u_h\|_{\ele} \leq C t^{\alpha/2 -1}$; using again Lemma \ref{lem:est_lubich}, the fact that $|\fmonio'|<B$, and writing $\partial_t \fmonio(u_h(t)) = \fmonio'(u_h(t))\partial_t u_h$, we have 
	
	$$
	\|II\|_{\ele} \leq \int^{t_n}_{0} \|(\big(G(\partial_t) - G(\overline{\partial_{\tau}}) \big)1)(t_n - s)\fmonio'(u_h(s))\partial_t u_h(s)\|_{\ele} \, ds
	$$
	
	$$ \leq  C\tau\int^{t_n}_0 (t_n - s)^{\alpha-1}  \|\partial_t u_h(s)\|_{L^2(\Omega)} \,ds \leq C\tau\int^{t_n}_0 (t_n - s)^{\alpha-1} s^{\alpha/2 -1} \,ds
	$$
	$$
	\leq C \tau t_n^{\frac{3}{2}\alpha -1},
	$$
	where in the last inequality we have estimated the integral in terms of the beta function $B(\alpha/2,\alpha)$, as in Theorem \ref{EUlocal}.

	Now, we observe that the last term $(iii)$ is a solution for \eqref{eq:duhamel_fuente}, with $f^n_h = P_h(\fmonio(u_h)-\fmonio(U_h))$. Then, in view of \eqref{eq:duhamel_fuente3} and \eqref{eq:est_operador_error}, and using again that $|\fmonio'|<B$, we have 
	
	$$\|(iii)\|_{L^2(\W)} \leq \tau^{\alpha}\sum^{n}_{j=1} c_{n - j} \|e_h(t_j)\|_{L^2(\W)}, $$
	where $\{c_n\}_{n \in \N}$ is the sequence defined in \eqref{eq:suc_importante}.
	
	Using that $C\tau(  t_n^{-1 + \alpha \frac{q}{2}} + t_n^{\frac{3}{2}\alpha -1} +  t_n^{\alpha -1} ) \leq C\tau t_n^{-1 + \alpha \frac{q}{2}},$
	with $C = C(T)$, and the fact that $\tau^{\alpha}c_n \sim \tau^{\alpha}(n+1)^{\alpha-1} = \tau t^{\alpha-1}_{n+1}$ (given by Lemma \ref{lem:lemma_seq}), we can derive the following  
	
	\begin{equation*}
		\label{eq:est_error_full_alfa}
		\|e_h(t_n)\|_{L^2(\W)} \leq C \tau t_n^{-1 + \alpha \frac{q}{2}} + C\tau\sum^{n}_{j=1}  t^{\alpha-1}_{n-j+1} \|e_h(t_j)\|_{L^2(\W)} = 
	\end{equation*} 
	
	$$C \tau t_n^{-1 + \alpha \frac{q}{2}} + C\tau\sum^{n-1}_{j=1}  t^{\alpha-1}_{n-j+1} \|e_h(t_j)\|_{L^2(\W)} + C\tau^{\alpha} \|e_h(t_n)\|_{L^2(\W)}.$$
	Taking $ \tau_0$ in such a way that $1 - \tau^{\alpha}_0 C > 0 $ and $\tau < \tau_0$, we can subtract the last term on the right from both sides an obtain
	
	\begin{equation*}
		\|e_h(t_n)\|_{L^2(\W)} \leq \frac{C}{1-\tau^{\alpha}_0C} \tau t_n^{-1 + \alpha \frac{q}{2}} + \frac{C}{1-\tau^{\alpha}_0C}\tau\sum^{n-1}_{j=1}  t^{\alpha-1}_{n-j+1} \|e_h(t_j)\|_{L^2(\W)}.
	\end{equation*} 
	Finally, applying Lemma \ref{lem:gronwall_discreto} (a discrete analog of Lemma \ref{gronwall}), we have 
	
	\begin{equation}
		\label{eq:est_error_full_alfa2}
		\|e_h(t_n)\|_{L^2(\W)} \leq C \tau t_n^{-1 + \alpha \frac{q}{2}} ,
	\end{equation} 
	for some $C = C(R,T,\alpha,q)$.

	From this, \eqref{eq:est_error_full_alfa2}, and \eqref{eq:errorCAC}, we can derive \eqref{eq:errorACF_alpha}.    \end{proof}

\section{Analysis of the fractional Allen-Cahn equation} 
\label{sec:analisisAC}
\subsection{ $L^{\infty}$ bounds}
\label{sec:bounds}
At this point we need to recall some results that play an important role in our analysis.  The following two theorems summarize classical global and interior regularity of solutions for the following problem, 
\begin{equation}
	\left\lbrace
	\begin{array}{rl}      
		(-\Delta)^s u  & = f  \text{ in } \Omega \\
		u & =  0  \text{ in }    \mathbb{R}^n \setminus \Omega.
	\end{array}
	\right.
	\label{eq:elliptic}
\end{equation}
We refer to \cite{FR} for further details about \eqref{eq:elliptic}.  

\begin{theorem}
	\label{teo:elliptic_reg2}
	Let $\W \subset \mathbb{R}^n$ be any bounded $C^{1,1}$ domain, $s \in (0,1)$, and $u$ be the solution of \eqref{eq:elliptic}. If $f \in L^{\infty}(\W)$; then $u \in C^s(\mathbb{R}^n)$. Moreover, $\|u\|_{C^s(\mathbb{R}^n)} \leq C \|f\|_{L^{\infty}(\W)},$
	where the constant $C$ depends only on $\W$ and $s$.  
\end{theorem}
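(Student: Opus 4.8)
The plan is to follow the classical barrier-and-rescaling strategy for the Dirichlet problem of the fractional Laplacian, as developed in \cite{FR}. The argument splits into three stages: a global $L^\infty$ bound, a sharp pointwise decay bound of the form $|u(x)| \le C\|f\|_{L^\infty(\W)}\,\delta(x)^s$ with $\delta(x) = \mathrm{dist}(x,\partial\W)$, and finally a rescaled interior estimate that upgrades this boundary decay into the global $C^s(\rn)$ seminorm.

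First I would establish the $L^\infty$ bound. Since $\W$ is bounded it lies in some ball $B_\rho$, and the explicit identity $\lfrac (\rho^2 - |x|^2)_+^s = \kappa_{n,s}$ on $B_\rho$, with $\kappa_{n,s}>0$ a dimensional constant, provides a supersolution: the function $w(x) = \kappa_{n,s}^{-1}\|f\|_{L^\infty(\W)}(\rho^2-|x|^2)_+^s$ satisfies $\lfrac w = \|f\|_{L^\infty(\W)} \ge |f|$ in $\W$ and $w \ge 0 = u$ in $\W^c$. Comparing $w\mp u$ via the maximum principle for $\lfrac$ then yields $\|u\|_{L^\infty(\rn)} \le C(\rho,n,s)\|f\|_{L^\infty(\W)}$.

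The crucial and hardest step is the boundary decay bound $|u(x)| \le C\|f\|_{L^\infty(\W)}\,\delta(x)^s$. Here the $C^{1,1}$ regularity of $\partial\W$ enters through a uniform two-sided ball condition: at each boundary point one may place a ball of a fixed radius on each side. Translating and rescaling the explicit solution $(1-|x|^2)_+^s$ of $\lfrac = \kappa_{n,s}$ produces, near $\partial\W$, a local supersolution comparable to $\delta(x)^s$, and a comparison argument combined with the $L^\infty$ bound of the previous step gives the claimed decay. I expect this barrier construction to be the main obstacle, since it is where the domain geometry is genuinely used and where the nonlocality makes the comparison subtler than in the local case: one must control the far-field tail contributions of the barrier and absorb them using the uniform ball condition.

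Finally, I would convert the pointwise decay into the global Hölder estimate by a rescaling argument against interior estimates. Fixing $x_0 \in \W$ and setting $R = \delta(x_0)$ so that $B_R(x_0)\subset\W$, the rescaled function $v(x) = R^{-s}u(x_0 + Rx)$ solves a fractional equation on $B_{1/2}$ with right-hand side $R^s f(x_0+Rx)$ of size $R^s\|f\|_{L^\infty(\W)}$. The decay $|u|\le C\|f\|_{L^\infty(\W)}\delta^s$ bounds both $\|v\|_{L^\infty(B_{1/2})}$ and the nonlocal tail of $v$ by $C\|f\|_{L^\infty(\W)}$, since $\delta(x_0+Rx)^s\le C R^s(1+|x|)^s$. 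Interior regularity for $\lfrac$ then gives $[v]_{C^s(B_{1/4})} \le C\|f\|_{L^\infty(\W)}$, which is scale invariant and so reads $[u]_{C^s(B_{R/4}(x_0))} \le C\|f\|_{L^\infty(\W)}$ uniformly in $x_0$. A standard covering argument, using the pointwise bound near $\partial\W$ to treat pairs of points lying at different scales, then assembles these local seminorm bounds together with the $L^\infty$ bound into the global estimate $\|u\|_{C^s(\rn)} \le C\|f\|_{L^\infty(\W)}$.
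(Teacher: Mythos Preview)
Your proof sketch is a faithful outline of the standard barrier-and-rescaling argument from \cite{FR} (Ros-Oton and Serra), and is correct in its essentials. However, the paper does not actually prove this theorem: it is stated as a known result and the reader is referred to \cite{FR} for the proof. So there is no ``paper's own proof'' to compare against; you have simply supplied the argument that the paper cites.
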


\begin{theorem}
	\label{teo:elliptic_reg}
	Let $\W$ be a bounded domain of $\mathbb{R}^n$, and let $u$ be a solution for \eqref{eq:elliptic}. If $\delta(x)= dist(x,\partial \W)$, for each $\rho > 0$ define $\W_{\rho} := \{ x \in \W : \delta(x) > \rho \}$. Then, if $\beta + 2s$ is not an integer, for every $0<\rho'<\rho$ we have 
	
	\begin{equation}
		\label{eq:reg_est}
		\|u\|_{C^{\beta + 2s}(\W_{\rho})} \leq C \| f \|_{C^{\beta}(\W_{\rho'})},
	\end{equation}
	with $C = C(n,s,\W,\beta,\rho,\rho')$.

\end{theorem}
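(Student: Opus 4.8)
The plan is to establish \eqref{eq:reg_est} by the standard localization-and-scaling scheme for interior Schauder estimates of the fractional Laplacian, reducing the global interior bound on $\W_{\rho}$ to a scale-invariant estimate on balls. First I would fix an arbitrary $x_0 \in \W_{\rho}$ and set $r := (\rho - \rho')/2$; since $\delta(x_0) > \rho$, one checks that $B_{2r}(x_0) \subset \W_{\rho'} \subset \W$, so the equation $\lfrac u = f$ holds on $B_{2r}(x_0)$ with $f \in C^{\beta}(B_{2r}(x_0))$. After translating $x_0$ to the origin and rescaling by $r$, using the homogeneity $\lfrac[u(r\,\cdot)] = r^{2s}(\lfrac u)(r\,\cdot)$, which transforms the $C^{\beta}$ and $C^{\beta+2s}$ seminorms by fixed powers of $r$, it suffices to prove a model estimate on $B_2$ and deduce the $C^{\beta+2s}$ bound on $B_1$.

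The core of the argument is a cutoff decomposition separating the local and nonlocal contributions. Choosing a smooth cutoff $\eta$ with $\eta \equiv 1$ on $B_{3/2}$ and $\operatorname{supp}\eta \subset B_2$, I would write $u = u\eta + u(1-\eta)$. The far part $w := u(1-\eta)$ vanishes on $B_{3/2}$, so for $x \in B_1$ the integral defining $\lfrac w(x)$ has no singularity and defines a smooth function whose $C^{\beta}(B_1)$ norm is bounded by a weighted tail quantity, essentially $\int_{\rn} |u(y)|(1+|y|)^{-n-2s}\,dy$. For the near part one has $\lfrac(u\eta) = f - \lfrac w$ on $B_1$ with compactly supported right-hand side lying in $C^{\beta}$, so the desired regularity follows from the global Schauder estimate on $\rn$: the Riesz potential inverts $\lfrac$ and maps $C^{\beta}(\rn)$ into $C^{\beta+2s}(\rn)$ boundedly precisely when $\beta + 2s \notin \N$. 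This is the step where the non-integer hypothesis is indispensable, since at integer orders the gain of $2s$ derivatives fails in the Hölder scale.

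To close the estimate with a right-hand side involving only $\|f\|_{C^{\beta}(\W_{\rho'})}$, I would control the tail term by invoking the global bound of Theorem \ref{teo:elliptic_reg2}: since $u$ solves \eqref{eq:elliptic} with $u \equiv 0$ on $\rn \setminus \W$ and $f \in L^{\infty}(\W)$, one has $\|u\|_{L^{\infty}(\rn)} \le C\|f\|_{L^{\infty}(\W)}$, which makes the weighted tail finite and controlled. Absorbing this contribution together with lower-order terms, via interpolation inequalities between Hölder norms, yields the scale-invariant estimate on $B_1$. Finally, translating back and taking the supremum over $x_0 \in \W_{\rho}$ — that is, covering $\W_{\rho}$ by the rescaled balls $B_r(x_0)$ — produces \eqref{eq:reg_est} with a constant depending only on $n$, $s$, $\W$, $\beta$, $\rho$, $\rho'$.

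I expect the main obstacle to be the rigorous treatment of the nonlocal tail: unlike the local Laplacian, the interior $C^{\beta+2s}$ regularity at a point depends on the values of $u$ throughout $\rn$, so one must quantify the smoothing of $\lfrac$ acting on the far part $w$ and combine it carefully with the global $L^{\infty}$ control. A secondary technical point is the sharp mapping property of the Riesz potential on Hölder spaces at the non-integer endpoint orders, which underlies the whole gain of $2s$ derivatives and is the reason the hypothesis $\beta + 2s \notin \N$ cannot be dropped.
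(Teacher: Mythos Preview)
The paper does not actually prove this theorem: it is stated as a classical interior regularity result for the fractional Dirichlet problem and attributed to the reference \cite{FR}, with no argument given. There is therefore no ``paper's own proof'' to compare your proposal against.

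That said, your outline is the standard route to such interior Schauder estimates for $(-\Delta)^s$ (localize, cut off, treat the far part as a smooth remainder, and use the mapping properties of the Riesz potential on H\"older spaces), and it is essentially what one finds in the cited literature. One point deserves care: you close the estimate by bounding the nonlocal tail via Theorem~\ref{teo:elliptic_reg2}, which yields control in terms of $\|f\|_{L^{\infty}(\W)}$, whereas the right-hand side of \eqref{eq:reg_est} involves only $\|f\|_{C^{\beta}(\W_{\rho'})}$. Since $\W_{\rho'} \subsetneq \W$, the latter does not dominate the former in general, so as literally stated the inequality cannot be obtained from your argument without an extra global term (or an implicit hypothesis that $f$ is globally bounded by a quantity absorbed into the constant). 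In the paper this causes no trouble, because the theorem is only invoked in the proof of Theorem~\ref{teo:existencia_bound_semi_tiempo} where the global $L^{\infty}$ bound on the solution is already available; but if you intend to write out a self-contained proof you should either include a tail or $\|u\|_{L^{\infty}(\rn)}$ term on the right, or make explicit that the constant depends on global data of $f$.
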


We devote the remaining sections to problem \eqref{eq:AC}. In order to exploit the results obtained for \eqref{eq:AC_trunc} we need to work with a truncated version of $f$. In this way, along this section we assume that our source term $g$ verifies 
\eqref{H1}, \eqref{H2} and agrees with $f$ in an interval  $[-1 - R , 1 + R]$, for some $R>0$. In order to prove that, in this case, the solution remains bounded between $1$ and $-1$ for any $v$ such that $\|v\|_{\infty}\le 1$, we are going to define first a discrete in time problem. That is, find $U^n \in \hsT$, with $n \in \{1,...,N\}$, such that 

\begin{equation} 
	\left\lbrace
	\begin{array}{rl}
		\dtd^{\alpha} U^n   +  A U^n & =  \dtd^{\alpha}v +  \fmonio(U^n) \\
		U^0 & = v.
	\end{array}
	\label{eq:semi_in_time}
	\right.
\end{equation}

The proof of the existence and uniqueness of solutions for this problem is similar to the one given for the fully discrete case. For the solution of this problem, we have the following result. 

\begin{theorem}
	\label{teo:existencia_bound_semi_tiempo}
	Consider the semi-discrete in time scheme \eqref{eq:semi_in_time} with $U^0 \in L^{\infty}(\W)$, then there exist $\tau_0>0$ in such a way that if $\tau < \tau_0$ \eqref{eq:semi_in_time} has a solution $U^n$, $n \in \{0,...,N\}$, with $U^n \in C^{s}(\mathbb{R}^n)$ for all $n>0$. Moreover if $|U^0(x)| \leq 1$ for all $x \in \W$, then $|U^n(x)| \leq 1$ for all $x \in \W$ and $n \in \{1,...N\}$. 
	
\end{theorem}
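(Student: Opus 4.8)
The plan is to handle the three assertions---existence, $C^s$ regularity, and the bound $|U^n|\le 1$---in that order, proceeding by induction on $n$ and reformulating \eqref{eq:semi_in_time} exactly as in Theorem \ref{teo:solucion_fully_dis}. Writing out the convolution quadrature $\dtd^{\alpha}U^n=\sum_{j=0}^n\w_j U^{n-j}$, noting that $\dtd^{\alpha}v=\big(\sum_{j=0}^n\w_j\big)v$, and isolating the $j=0$ term $\w_0 U^n=\tau^{-\alpha}U^n$, the scheme becomes
$$(A+\tau^{-\alpha}I)U^n=\tilde f_n,\qquad \tilde f_n:=\Big(\sum_{j=0}^n\w_j\Big)v-\sum_{j=1}^n\w_j U^{n-j}+\fmonio(U^n).$$
Given $U^0,\dots,U^{n-1}$, existence and uniqueness of $U^n\in\hsT$ follow from Banach's fixed point theorem applied to $T(w)=(I+\tau^{\alpha}A)^{-1}(F+\tau^{\alpha}\fmonio(w))$, with $F$ the known memory term, using $\|(I+\tau^{\alpha}A)^{-1}\|_{\ele}\le 1$ and $|\fmonio'|<B$: the map is a contraction once $\tau^{\alpha}B<1$, precisely as in Theorem \ref{teo:solucion_fully_dis}.

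For regularity I would bootstrap. Since $\fmonio$ is bounded and, by the inductive hypothesis, $v$ and $U^1,\dots,U^{n-1}$ lie in $L^\infty(\W)$, the function $\tilde f_n\in L^\infty(\W)$ irrespective of the a priori regularity of $U^n$. First I would show $U^n\in L^\infty(\W)$ by a Stampacchia truncation in the weak formulation: testing $\langle U^n,\phii\rangle_{H^s(\rn)}+\tau^{-\alpha}(U^n,\phii)=(\tilde f_n,\phii)$ with $\phii=(U^n-k)^+\in\hsT$ and using $\langle U^n,(U^n-k)^+\rangle_{H^s(\rn)}\ge 0$ yields $\tau^{-\alpha}k\,\|(U^n-k)^+\|_{L^1}\le \|\tilde f_n\|_{L^\infty}\|(U^n-k)^+\|_{L^1}$, so $(U^n-k)^+\equiv 0$ for $k>\tau^{\alpha}\|\tilde f_n\|_{L^\infty}$, and symmetrically from below. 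With $U^n\in L^\infty$, the right-hand side of $(-\Delta)^s U^n=\tilde f_n-\tau^{-\alpha}U^n$ lies in $L^\infty(\W)$, so Theorem \ref{teo:elliptic_reg2} gives $U^n\in C^s(\rn)$, while Theorem \ref{teo:elliptic_reg} supplies interior smoothness, making $(-\Delta)^sU^n$ well defined pointwise in $\W$.

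For the bound, assume inductively $|U^m|\le 1$ for $m<n$ and let $M=\max_{\rn}U^n=U^n(x_0)$. If $M>1$ then $x_0\in\W$ (as $U^n\equiv 0$ on $\W^c$), and since $U^n(x_0)\ge U^n(y)$ for all $y$, $(-\Delta)^sU^n(x_0)\ge 0$. Evaluating \eqref{eq:semi_in_time} at $x_0$, discarding the nonnegative diffusion term, and using $\w_j<0$ for $j\ge 1$ together with $U^{n-j}(x_0)\le 1$ gives $\sum_{j=1}^n\w_jU^{n-j}(x_0)\ge\sum_{j=1}^n\w_j$; writing $S:=\sum_{j=0}^n\w_j>0$ this reduces to $\w_0(M-1)\le S(v(x_0)-1)+\fmonio(M)\le \fmonio(M)$, the last step using $v(x_0)\le 1$. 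If $1<M\le 1+R$ then $\fmonio(M)=f(M)=M-M^3<0<\w_0(M-1)$, a contradiction; if $M>1+R$ then $\fmonio(M)\le B$ forces $M-1\le B\tau^{\alpha}$, which for $\tau_0$ chosen with $B\tau_0^{\alpha}\le R$ contradicts $M>1+R$. Hence $M\le 1$, and the symmetric argument at the minimum (using that $f$ is odd and the reversed sign of $(-\Delta)^s$ at a minimum) gives $U^n\ge -1$.

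The main obstacle is the discrete maximum-principle step. It rests on the precise sign pattern of the quadrature weights---$\w_0=\tau^{-\alpha}>0$, $\w_j<0$ for $j\ge1$, and positivity of all partial sums $\sum_{j=0}^n\w_j$, which follow from \eqref{eq:definicion_pesos} and $\sum_{j\ge0}\wtilde_j=(1-1)^\alpha=0$---on having enough pointwise regularity to legitimately assert $(-\Delta)^sU^n(x_0)\ge0$ at the extremum, and on the case split governed by the smallness of $\tau$, which confines $M$ to the interval where $\fmonio$ coincides with $f$ and carries the stabilizing sign.
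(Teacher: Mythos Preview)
Your existence argument and your case split in the maximum-principle step are both sound, and the latter even handles a point the paper glosses over (the paper simply asserts $\fmonio(U^n(x_0))<0$ whenever $U^n(x_0)>1$, which is only clear on $(1,1+R]$). The difficulty lies one step earlier, in the regularity you need to make the pointwise argument legitimate.

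You invoke Theorem~\ref{teo:elliptic_reg} to bootstrap to interior smoothness, but that theorem requires the right-hand side to be H\"older continuous on $\W_{\rho'}$. Your right-hand side $\tilde f_n-\tau^{-\alpha}U^n$ contains the term $\big(\sum_{j=0}^n\w_j\big)v$, and $v=U^0$ is only assumed to lie in $L^\infty(\W)$. Hence the bootstrap stalls at $U^n\in C^s(\rn)$, which is not enough to guarantee that the principal-value integral defining $(-\Delta)^sU^n(x_0)$ converges or that the scheme holds pointwise at $x_0$. This is precisely why the paper takes a different route: it first runs the maximum-principle argument under the extra hypothesis that $U^0$ is smooth (so that the bootstrap does give $U^m\in C^2(\W)\cap C^s(\rn)$ for all $m$), and then passes to general $U^0\in L^\infty(\W)$ by a density argument---approximating $U^0$ in $\ele$ by smooth $U^0_k$ with $|U^0_k|\le 1$, showing that the corresponding discrete solutions $U^n_k$ converge to $U^n$ in $\ele$, and concluding $|U^n|\le 1$ almost everywhere.

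Your Stampacchia idea can in fact be pushed to bypass the pointwise step altogether, and this is arguably cleaner than the paper's density detour. Your truncation already gives $\|U^n\|_{L^\infty}\le\tau^{\alpha}\|\tilde f_n\|_{L^\infty}$; computing $\|\tilde f_n\|_{L^\infty}\le\tau^{-\alpha}+B$ from the sign pattern of the weights and $|v|,|U^{n-j}|\le 1$ yields $|U^n|\le 1+B\tau^{\alpha}\le 1+R$ for small $\tau$, so $\fmonio(U^n)=f(U^n)$ everywhere. Now test the weak formulation with $(U^n-1)^+\in\hsT$: the same sign considerations give $\tilde f_n\le\tau^{-\alpha}+\fmonio(U^n)$ on $\{U^n>1\}$, and after cancelling $\tau^{-\alpha}\int(U^n-1)^+$ on both sides one obtains
\[
\tau^{-\alpha}\|(U^n-1)^+\|_{\ele}^2\le\int_{\{U^n>1\}} f(U^n)(U^n-1)^+\le 0,
\]
since $f<0$ on $(1,1+R]$. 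This gives $U^n\le 1$ (and symmetrically $U^n\ge -1$) entirely at the weak level, with no interior $C^2$ regularity and no approximation of the initial datum required.
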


\begin{proof}
	
	Suppose we have a solution with the desire properties for all $m<n$. From \eqref{eq:semi_in_time} we have the identity 
	
	\begin{equation} 
		U^n  =  ( I  +  \tau^{\alpha}  A)^{-1} \Big( \left(\sum_{j=0}^{n}\wtilde_j \right)U^{0} -  \sum^n_{j=1}\wtilde_j U^{n-j} + \tau^{\alpha} \fmonio(U^n) \Big), 
		\label{eq:semi_time_ex1}
	\end{equation}
	where $\wtilde_j := \tau^{\alpha}\w_j$. 
	
	First we want to show that there exists $U^n \in \ele$ that satisfies equation \ref{eq:semi_time_ex1}. In order to do that, we define the map $T: L^{2}(\W) \to L^{2}(\W)$
	
	$$T(u) :=  ( I  +  \tau^{\alpha}  A)^{-1} \Big( \left(\sum_{j=0}^{n}\w_j \right)U^{0} -  \sum^n_{j=1}\w_j U^{n-j} +  \tau^{\alpha}\fmonio(u) \Big).$$
	
	We want to verify that $T$ is a contraction in $L^{2}(\W)$. From the fact that $A$ is a maximal monotone operator (see \cite{Brezis}), we know that $\|(I + \tau^{\alpha}A)\|_{\ele} \leq 1$. Let $u$ and $v$ $\in L^{2}(\W)$, we can estimate 
	
	$$\| T(u) - T(v)  \|_{L^{2}(\W)} = \|  ( I  +  \tau^{\alpha}  A)^{-1} \tau^{\alpha} \big( \fmonio(u) - \fmonio(v) \big)  \|_{L^{2}(\W)} $$
	
	$$ \leq \tau^{\alpha} \| \fmonio(u) - \fmonio(v) \|_{\ele} \leq \tau^{\alpha}B\| u - v \|_{\ele}. $$
	Then, for a small $\tau$ we have that $T$ is a contraction. Hence, there exists a unique solution $U^n \in \ele$ for \eqref{eq:semi_time_ex1}, and the identity
	
	\begin{equation}
		\label{eq:semi_time_ex2} 
		A U^n  = \left(\sum_{j=0}^{n}\w_j \right)U^{0} -  \sum^n_{j=1}\w_j U^{n-j} +  \fmonio(U^n) 
	\end{equation}
	is satisfied. Since the right hand side belongs to $L^{\infty}(\W)$, applying the Theorem \ref{teo:elliptic_reg2}, we can conclude that $U^n \in C^s(\mathbb{R}^n) \cap C^{2s}(\W_{\rho})$ for all $0<\rho<\rho_0$. 
	
	Now, we want to see that if the initial data is regular enough, then the solution remains bounded between $1$ and $-1$. Indeed, suppose we have $U^m \in  C^{2}(\W) \cap C^s(\mathbb{R}^n) $ and $|U^m(x)|\leq 1$ for all $x \in \W$, for all $m<n$. If we take a fixed $\rho'>0$ with $\rho=2\rho'$ in Theorem \ref{teo:elliptic_reg}, and use the the fact that $U_n \in C^{2s}(\Omega_{\rho})$, then $g(U_n) \in C^{2s}(\Omega_{\rho})$ and we can conclude that $U_n \in C^{2s + 2s}(\Omega_{\rho'})$. A repeated application of this argument, along with the fact that $\fmonio \in C^2(\mathbb{R})$, implies that $U^n \in C^{2+2s}(\W_{k\rho_0})$ for some $k \in \N,$ only depending on $s$. Since $\rho_0$ can be arbitrary small, we can assert that $U^n \in  C^{2}(\W)$, and then, $U^n \in  C^{2}(\W) \cap C^s(\mathbb{R}^n)$.

	On the other hand, the semi-discrete in time scheme gives us the relation           
	
	\begin{equation*}
		\label{eq:semi_time_ex3}
		\sum^n_{j=0}\w_j U^{n-j} - \left(\sum_{j=0}^{n}\w_j \right)U^{0}  = -  A U^n +  \fmonio(U^n), 
	\end{equation*}
	which can be rewritten as 
	
	\begin{equation*}
		\label{eq:semi_time_ex4}
		\sum^{n-1}_{j=0}a_j (U^{n-j}-U^{n-j-1})   = - A U^n +  \fmonio(U^n), 
	\end{equation*}
	with $a_n = \sum^{n}_{j=0} \w_j$.  
	Suppose that there exist some $x_0$ such that $U^n$ achieves its maximum on that point, and $U^n(x_0)>1$. Recall that $\|U^m\|_{L^{\infty}(\W)} \leq 1$ for all $m<n$. From the regularity of $U^n$, it can be shown that $A U^n(x_0) = (-\Delta)^s U^n(x_0) \geq 0$ (see \cite[Lemma 3.9]{FR}). Then, from the fact that $U^n(x_0)>1$, we have $\fmonio(U^n(x_0))<0$, which implies
	
	\begin{equation*}
		\label{eq:semi_time_ex5}
		\sum^{n-1}_{j=0}a_j (U^{n-j}(x_0)-U^{n-j-1}(x_0))   < 0.
	\end{equation*}
	
	Observing the fact that $\{a_n\}$ is a positive and strictly decreasing sequence, it is possible to show that there exist $m_0<n$, such that $U^{m_0}(x_0) > U^{n}(x_0)$ (see \cite[Lemma 5.2.4]{tesis_yo}), and then $1 \geq U^{m_0}(x_0) > U^{n}(x_0) > 1$. The contradiction came from the assumption that $U^{n}(x_0) > 1$.
	
	Now we want to see that the same bound holds for less regular initial data. To this end, applying a density argument, suppose $U^n$ is a solution for \eqref{eq:semi_in_time} with  $U^0 \in L^{\infty}(\W)$, $\|U^0\|_{L^{\infty}(\W)}\leq 1$. Consider $\{U^0_k\}_{k \in \N} \subset C^{\infty}_c(\W)$, with $\|U^0_k\|_{L^{\infty}(\W)}\leq 1$ for all $k$, and $ U^0_k \to U^0 $ in $\ele$. 
	
	Let $U^n_k$ be the solution of \eqref{eq:semi_in_time} with initial data $U^0_k$. Calling $e^n_k = U^n - U^n_k$, we have the equation
	
	\begin{equation*} 
		e^n_k  =  ( I  +  \tau^{\alpha}  A)^{-1} \Big( \left(\sum_{j=0}^{n}\wtilde_j \right)e^{0}_k -  \sum^n_{j=1}\wtilde_j e^{n-j}_k +  \tau^{\alpha} \big( \fmonio(U^n) - \fmonio(U^n_k) \big) \Big),
		\label{eq:semi_time_ex6}
	\end{equation*}
	and taking norms we obtain 
	
	\begin{equation} 
		\| e^n_k \|_{\ele}  \leq  \|e^{0}_k\|_{\ele} +  \sum^n_{j=1}-\wtilde_j \| e^{n-j}_k\|_{\ele} +  \tau^{\alpha} B \| e^n_k \|_{\ele}
		\label{eq:semi_time_ex7}.
	\end{equation}
	Choosing $\tau_0$ such that $\tau_0^{\alpha}B<1$, recalling that $0 < -\wtilde_j \leq -\wtilde_1$, and applying a discrete Gronwall type inequality we have
	
	$$\| e^n_k \|_{\ele} \leq \|e^{0}_k\|_{\ele} \frac{e^{n\wtilde_1/1 - \tau_0^{\alpha}B}}{1 - \tau_0^{\alpha}B} = C\|e^{0}_k\|_{\ele}, $$
	with $C=C(\tau_0,\alpha,B,n)$, and then, $\| e^n_k \|_{\ele} \to 0$ with $k \to \infty$.   
	
	Since $\| U^0_k \|_{L^{\infty}(\W)} \leq 1$, then $\| U^n_k \|_{L^{\infty}(\W)} \leq 1$ for all $k$, and for all $n \in \{1,...,N\}$. Hence, for a fixed $n$, we can construct a sub-sequence $\{U^n_{k_j}\}_{j\in \N}$, such that $U^n_{k_j} \to U^n$ a.e. , and conclude that $\|U^n\|_{L^{\infty}(\W)} \leq 1$.      \end{proof}

Finally, proceeding analogously as in Theorem \ref{teo:error_fully_alfa}, we can derive the following error estimation. 

\begin{theorem}
	\label{teo:error_semi_tiempo}
	Let $u$ and $U^n = U(t_n)$ be the solution of \eqref{eq:AC} and \eqref{eq:semi_in_time} respectively, with $v \in \dot{H}^{q}(\W)$, $\|v\|_{q,s}\leq R$ with $q \in (0,2]$. Then, if $\tau < \tau_0$, for a sufficiently small $\tau_0>0$ exists a positive constant $C=C( R , T , \alpha , q)$ such that   
	
	\begin{equation}
		\|u(t_n) - U(t_n)\|_{L^2(\Omega)} \leq C t_n^{ - 1 + \alpha \frac{q}{2}}\tau, \quad t_n\in [0,T]. 
		\label{eq:error_semi_tiempo}
	\end{equation}
\end{theorem}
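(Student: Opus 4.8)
The plan is to mirror the temporal part of the proof of Theorem \ref{teo:error_fully_alfa}, dropping the spatial discretization entirely. Since \eqref{eq:semi_in_time} is continuous in space, there is no finite element error to control and one expects the whole discrepancy to be of temporal nature; accordingly Theorem \ref{teo:errorAC} plays no role here and the final bound contains only the $\tau$-term. I would first record that the operator $G(z) := (z^{\alpha} I + A)^{-1}$ is analytic in a sector $\Sigma_{\theta}$ with $\theta \in (\pi/2,\pi)$ and satisfies a resolvent bound $\|G(z)\| \leq M|z|^{-\alpha}$; this follows from the self-adjointness and positivity of $A$ (its spectrum lies in $[\lambda_1,\infty)$ with $\lambda_1>0$), exactly the spectral features used for $A_h$.

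Writing $u = G(\partial_t)\partial_t^{\alpha}v + G(\partial_t)\fmonio(u)$ and $U = G(\dtd)\dtd^{\alpha}v + G(\dtd)\fmonio(U)$ via the operational calculus, and setting $e(t_n) := u(t_n) - U^n$, I would subtract the two identities and split
\[ e = \big(G(\partial_t)\partial_t^{\alpha} - G(\dtd)\dtd^{\alpha}\big)v + \big(G(\partial_t) - G(\dtd)\big)\fmonio(u) + G(\dtd)\big(\fmonio(u) - \fmonio(U)\big) =: (i) + (ii) + (iii), \]
the same decomposition as in Theorem \ref{teo:error_fully_alfa} but without the projection $P_h$ and with $A$ in place of $A_h$. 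The term $(i)$ is the convolution-quadrature error for the linear homogeneous problem and is bounded by $Ct_n^{-1+\alpha q/2}\tau$ (arguing as in \cite[Theorem 5.3]{ABB2}), using $\|v\|_{q,s}\leq R$. For $(ii)$ I would invoke the associativity \eqref{eq:asociatividad} to write $\fmonio(u) = \fmonio(u(0)) + 1*\partial_t \fmonio(u)$ and split $(ii) = I + II$; Lemma \ref{lem:est_lubich} (with $\mu=\alpha$, $\beta=1$) together with $|\fmonio|<B$ gives $\|I\|\leq Ct_n^{\alpha-1}\tau$, while $\|II\|\leq C\tau t_n^{3\alpha/2-1}$ follows from $\partial_t\fmonio(u)=\fmonio'(u)\partial_t u$, the bound $|\fmonio'|<B$, and the a priori estimate $\|\partial_t u\|_{\ele}\leq Ct^{\alpha/2-1}$ of Lemma \ref{lem:estimacion_derivada} (this time applied to the exact solution $u$ rather than to $u_h$), estimating the resulting convolution by a beta function.

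The crucial observation for $(iii)$ is that it solves the discrete Duhamel recursion \eqref{eq:duhamel_fuente} with source $f^n = \fmonio(u(t_n)) - \fmonio(U^n)$ and with $A$ replacing $A_h$. The key point making the transfer work is that the whole machinery leading to \eqref{eq:est_operador_error} depends only on the convolution weights $\wtilde_n$ and on the contractivity $\|(I+\tau^{\alpha}A)^{-1}\|_{\ele}\leq 1$, which holds here since $A$ is maximal monotone (as already used in Theorem \ref{teo:existencia_bound_semi_tiempo}). Hence the same sequence $\{c_n\}$ of \eqref{eq:suc_importante} governs the recursion and, by \eqref{eq:duhamel_fuente3} and \eqref{eq:est_operador_error}, $\|(iii)\|_{\ele}\leq \tau^{\alpha}\sum_{j=1}^{n}c_{n-j}\|e(t_j)\|_{\ele}$. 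Combining the three estimates, absorbing $t_n^{\alpha-1}$ and $t_n^{3\alpha/2-1}$ into $t_n^{-1+\alpha q/2}$ (up to a constant $C(T)$, which is legitimate for $q\leq 2$), and invoking the asymptotics $\tau^{\alpha}c_n\sim \tau\, t_{n+1}^{\alpha-1}$ of Lemma \ref{lem:lemma_seq}, I would arrive at
\[ \|e(t_n)\|_{\ele}\leq C\tau t_n^{-1+\alpha q/2} + C\tau\sum_{j=1}^{n} t_{n-j+1}^{\alpha-1}\|e(t_j)\|_{\ele}. \]
After splitting off the $j=n$ term (legitimate once $\tau_0$ is chosen so that $1-\tau_0^{\alpha}C>0$ and $\tau<\tau_0$) and applying the discrete Gronwall inequality of Lemma \ref{lem:gronwall_discreto}, the claimed bound \eqref{eq:error_semi_tiempo} follows. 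I expect the only delicate point to be the verification that the operator-level estimates established for $A_h$ in Theorem \ref{teo:error_fully_alfa} survive verbatim for the continuous operator $A$; since they rest solely on spectral positivity, analyticity of $G$, and the contraction $\|(I+\tau^{\alpha}A)^{-1}\|\leq1$, this is a matter of transcription rather than of new ideas, and the absence of $P_h$ makes the argument, if anything, lighter.
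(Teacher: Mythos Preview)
Your proposal is correct and follows exactly the approach the paper intends: the paper's own proof is simply the one-line remark ``proceeding analogously as in Theorem \ref{teo:error_fully_alfa}'', and your write-up is precisely that analogy, replacing $A_h$ by $A$, dropping $P_h$, and discarding the spatial-error contribution. Your observation that the Duhamel machinery \eqref{eq:duhamel_fuente}--\eqref{eq:est_operador_error} transfers because it rests only on the contractivity $\|(I+\tau^{\alpha}A)^{-1}\|_{\ele}\le 1$ is the right justification for why the transcription is legitimate.
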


Now, consider $\|v\|_{L^{\infty}(\W)} \leq 1$. Given a fixed $t \in (0,T]$ we can construct a family of nested partitions of [0,T] with $\tau = T/N_k$, $k \in \N$, and $N_k \to \infty$ if $k \to \infty$, in such a way that $t$ belongs to all the partitions. Let $U_k$ be the solution of \eqref{eq:semi_in_time}, and $u$ the solution of \eqref{eq:weakAC}, using Theorem \ref{teo:error_semi_tiempo} we have that $U_k(t) \to u(t)$ in $\ele$. So, we can extract a subsequence $\{U_{k_j}(t)\}_{j \in \N}$ such that $U_{k_j}(t) \to u(t)$ a.e. . Using \ref{teo:existencia_bound_semi_tiempo}, we know that $\|U_k(t) \|_{L^{\infty}(\W)} \leq 1$, and then, $\|u(t) \|_{L^{\infty}(\W)} \leq 1$. We can summarize this observation in the following result.

\begin{theorem}
	Let $u$ a solution of \eqref{eq:weakAC2} with $\|v\|_{L^{\infty}(\W)} \leq 1$. Then $\|u(t)\|_{L^{\infty}(\W)} \leq 1$ for all $t \in (0,T]$.  
\end{theorem}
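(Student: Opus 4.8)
The plan is to transfer the discrete $L^\infty$ bound of Theorem \ref{teo:existencia_bound_semi_tiempo} to the continuous solution $u$, using the convergence of the semi-discrete in time scheme quantified in Theorem \ref{teo:error_semi_tiempo}. Since the discrete maximum principle only controls the solution at grid nodes, I would first establish the bound at a dense set of times and then upgrade it to every $t \in (0,T]$ by the time continuity of $u$.

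First I would fix $t \in (0,T]$ with $t/T \in \mathbb{Q}$, say $t/T = p/m$ in lowest terms, and take uniform partitions with step $\tau_k = T/N_k$ and $N_k = m\,2^k$, so that the grids are nested and $t = t_{p\,2^k}$ is a common node of every partition. Writing $U_k$ for the solution of \eqref{eq:semi_in_time} on the $k$-th grid with initial datum $v$ (which we assume, as throughout, to lie in some $\dot{H}^q(\W)$ so that Theorem \ref{teo:error_semi_tiempo} applies), the hypothesis $\|v\|_{L^\infty(\W)} \le 1$ and Theorem \ref{teo:existencia_bound_semi_tiempo} give $\|U_k(t)\|_{L^\infty(\W)} \le 1$ for all $k$. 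As $t$ is bounded away from $0$, estimate \eqref{eq:error_semi_tiempo} yields $\|u(t) - U_k(t)\|_{\ele} \le C\,t^{-1+\alpha q/2}\tau_k \to 0$, hence $U_k(t) \to u(t)$ in $\ele$. Passing to a subsequence with $U_{k_j}(t) \to u(t)$ almost everywhere and using $|U_{k_j}(t)| \le 1$ a.e., the bound is inherited by the limit, so $\|u(t)\|_{L^\infty(\W)} \le 1$.

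To reach an arbitrary $t \in (0,T]$ I would use the time continuity $u \in C((0,T],\hsT) \hookrightarrow C((0,T],\ele)$. Choosing rational multiples $t_\ell \to t$, the previous step gives $\|u(t_\ell)\|_{L^\infty(\W)} \le 1$ while $u(t_\ell) \to u(t)$ in $\ele$; extracting an a.e. convergent subsequence then transfers the bound to $u(t)$ by the same lower semicontinuity argument.

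The hard part is exactly this node/limit matching. The discrete bound is available only at exact grid points, and a refining sequence of uniform grids can contain a prescribed $t$ only when $t/T$ is rational; so the estimate must be obtained first on the dense set of rational multiples of $T$ and then propagated to all $t$, the key tools being the $\ele$-in-time continuity of $u$ and the lower semicontinuity of the $L^\infty$ norm under almost everywhere convergence.
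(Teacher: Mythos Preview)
Your argument follows the same strategy as the paper: approximate $u(t)$ by the semi-discrete in time solutions $U_k(t)$, invoke the discrete $L^\infty$ bound of Theorem \ref{teo:existencia_bound_semi_tiempo}, use the $L^2$ convergence from Theorem \ref{teo:error_semi_tiempo}, and pass to an a.e.\ convergent subsequence.

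The one place where you differ is in the handling of a general $t\in(0,T]$. The paper simply asserts that for a fixed $t$ one can build nested uniform partitions with $\tau=T/N_k$ all containing $t$; you correctly observe that with uniform grids this forces $t/T\in\mathbb{Q}$, so you first prove the bound on the dense set of rational multiples of $T$ and then extend to every $t$ via $u\in C((0,T],\ele)$ together with lower semicontinuity of the $L^\infty$ norm under a.e.\ convergence. This is a genuine refinement: the convolution-quadrature scheme and the error estimate are formulated on uniform meshes, so the paper's claim is literally available only for rational $t/T$, and your continuity argument is exactly the clean way to close that gap. Otherwise the two proofs coincide.
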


This theorem implies that all the analysis displayed up to here remains valid replacing $\fmonio$ by $f$ and therefore to the Allen-Cahn equation \eqref{eq:AC}.

\section{Discussion about the asymptotic behavior with $s \rightarrow 0$} 
\label{sec:asymptotic}

Considering now the usual derivative in time ($\alpha = 1$), the Allen-Cahn equation can be understood as a gradient flow in $L^2$, minimizing the free energy functional   

\begin{equation}
	F_s(u) = \frac{\e^2}{2}|u|^2_{H^s(\rn)} + \int_{\Omega} W(u),
	\label{eq:funcional_free}
\end{equation} 
with $W(u) = \frac{(u^2 - 1)^2}{4}$ (see for example \cite{ains}). 
It is well known that the size of $\eps$ affects the interface width of the minimizers of $F_s$. That is, interface width tends to zero with $\eps \to 0$. This fact can be easily derived from expression \eqref{eq:funcional_free}, observing that the right term, which penalizes the variation of $u$, tends to lose relevance as $\eps$ goes to zero, forcing the minimizer $u$ to take values into the set of minimizers of $W$, that is values belonging to $\{1 , -1 \}$. However, since $\eps > 0$, the right term promote the minimization of the interface length (for $n \geq 2$), which implies that the limit behavior cannot be understood as the minimization of $F_s$ with $\eps = 0$. In \cite{SV}, Savin and Valdinoci show, by means of $\Gamma$-convergence theory, that the limit behavior of the problem of minimizing  $F_s$ tends to a minimal surface problem if $s \in [1/2,1)$, and to a non-local version of the minimal surface problem for $s \in (0,1/2)$.

In our case, numerical experiments (see Figure \ref{ejemplo_s}) show that the interface width tends to become thinner when the parameter $s$ goes to zero, suggesting that (as in the case $\eps \to 0$) a minimizer of $F_s$ should approximate a binary function when $s \to 0$. This behavior was also observed in \cite{ains2}, where authors derive the scaling law $\tau = O(\eps^{1/2s})$, with $\tau$ denoting the interface width. On the other hand, a displacement of the equilibrium states have been observed in our numerical experiments for small values of the fractional parameter $s$ (see Figure \ref{ejemplo_1}).


Motivated by the previous observation, the aim of this section is to analyze the asymptotic behavior of the minimizers of $F_s$ with $s$ tending to zero. To this end, we are going to follow the ideas displayed in \cite{SV}, and study the $\Gamma$-convergence of a suitable modification of the functional $F_s$. By means of this framework, it is possible to conclude that the minimizers of \eqref{eq:funcional_free} approach binary functions, and the equilibrium states should be placed near $\sqrt{(1 - \eps^2)}$ and $-\sqrt{(1 - \eps^2)}$ for small values of the fractional parameter $s$.

\subsection{$\Gamma$-convergence when $s \rightarrow 0$.}

Since $\Gamma$-convergence may not be a usual concept in numerical analysis, we start this section by giving its definition and basic properties, and we refer to \cite{gamma} for further details. 

Let $X$ be a topological space, and $\{F_n\}_{\in \N}$, $F_n : X \to [-\infty,+\infty]$, a sequence of functionals. Then, we say that $F_n$ $\Gamma$-converge to $F: X \to [-\infty,+\infty]$, if the following conditions holds: 

\begin{itemize}
	\item For every sequence $\{x_n\}_{n \in \N} \subset X$ such that $x_n \to x$, then 
	$$F(x) \leq \liminf_{n \to \infty} F_n (x_n).$$  
	
	\item For every $x \in X$, there exist a sequence $x_n$ converging to $x$ such that
	$$F(x) \geq \limsup_{n \to \infty} F_n (x_n).$$
\end{itemize} 

Also, we define a complementary concept. We say that the family  $\{F_n\}$ has the equi-coerciveness property if for all $c \in \mathbb{R}$ exists a compact set $K_c$ in such a way that $\{F_n < c \}\subset K_c$ for all $n \in \N$.

These two concept allow us to say something about the limiting behavior of the minimizers of $F_n$ in terms of the minimizers of $F$. That is, if $x_n$ is a minimizer of $F_n$, then every cluster point of $\{x_n\}_{n \in \N}$ (if exist) is a minimizer of $F$. This can be summarized as follow 

$$\text{ Equi-coerciveness} + \Gamma\text{-convergence} \Rightarrow \text{Convergence of minimizers}$$


In order to study the $\Gamma$-convergence of $F_s$, we must set an appropriate domain $X$ for $F_s$, 

$$X=\{u \in L^{\infty}(\rn) \text{ with } |u|\leq 1, \text{ and } u \equiv 0 \text{ in } \Omega^c \}.$$ 
And we are going to consider this space furnished with the norm $\| \cdot \|_{L^1(\W)}$. Note that if $u \in X$ but $u \not \in \tilde{H}^s(\Omega)$, then we can define $F_s(u) = +\infty$.

From the definition of $F_s$, and supposing $\e^2<1$, we have 

$$F_s(u) = \frac{\e^2}{2}|u|^2_{H^s(\rn)} - \frac{\e^2}{2}\|u\|^2_{\ele} + \frac{\e^2}{2}\|u\|^2_{\ele} + \int_{\Omega} W(u) $$

$$= \frac{\e^2}{2}\big( |u|^2_{H^s(\rn)} - \|u\|^2_{\ele}\big) + \int_{\Omega} \big( W(u) +\frac{\e^2}{2}u^2 \big), $$
and, denoting $\Fu[u](\xi)$ as the Fourier transform of $u$, we know from \cite{Hitchhikers} and Plancharel's identity that
$|u|^2_{H^s(\rn)} = \|\mathcal{F}[u]|\xi|^s\|^2_{L^2(\rn)} = \int_{\rn} \mathcal{F}^2[u](\xi)|\xi|^{2s} \, d\xi, $
and
$\|u\|^2_{\ele} = \int_{\rn}\mathcal{F}^2[u](\xi)\, d\xi .$
Then we have 

$$|u|^2_{H^s(\rn)} - \|u\|^2_{\ele} = \int_{\rn} \big(|\xi|^{2s} - 1\big)\Fu^2[u](\xi) \, d\xi, $$
so we can rewrite $F_s$ as

$$F_s = \frac{\e^2}{2}\int_{\rn} \big(|\xi|^{2s} - 1\big)\Fu^2[u](\xi) \, d\xi + \int_{\Omega} \tilde{W}(u), $$
with $\tilde{W}(s) =  W(s) +\frac{\e^2}{2}s$. 

Since we have $\e^2<1$, $\tilde{W}(s)$ is a double-well type potential with minimizers $\pm\sqrt{1-\e^2}$.

Noticing that $\tilde{W}(\pm\sqrt{1-\e^2}) = k_{\e}>0$ , we define a new auxiliary functional $\tilde{F}_s$ 

$$\tilde{F}_s  = \frac{1}{2s}\big(F_s - \int_{\Omega}k_{\e} \big),$$
and, for the sake of simplicity, we redefine $\tilde{W}$ as $\tilde{W}(s) =  W(s) +\frac{\e^2}{2}s - k_{\e},$
so now $\tilde{W}(\pm\sqrt{1-\e^2})=0$. 
Fixing $s$ and $\e$, it is easy to check that $u \in X$ is a minimizer of $F_s$ if and only if $u$ is a minimizer of $\tilde{F}_s$. So we focus our study on the asymptotic behavior of $\tilde{F}_s$.     

Defining the functional

\begin{equation}
	F_0(u) = \left\lbrace
	\begin{array}{rl}      
		\int_{\rn} \ln{|\xi|} \Fu^2[u](\xi) \, d\xi,  & \text{ if } u \equiv \sqrt{1-\e^2}\big(I_{E} - I_{E^c} \big) \\
		+\infty,  & \text{in other case},
	\end{array}
	\right.
	\label{eq:funcional}
\end{equation}
with $E \subset \Omega$, we have the following theorem. 

\begin{theorem}
	Let $\tilde{F}_s$ and $F_0$ defined as before, then $\tilde{F}_s \xrightarrow[]{\Gamma} F_0$. 
\end{theorem}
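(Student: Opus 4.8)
The strategy is to verify the two defining conditions of $\Gamma$-convergence (the $\liminf$ lower bound and the $\limsup$ recovery sequence) for the family $\tilde F_s$ as $s \to 0^+$, working in the topology of $L^1(\W)$. The central analytic device is the rewriting already established in the excerpt,
\begin{equation}
	\label{eq:gamma_rewrite}
	\tilde F_s(u) = \frac{\e^2}{2s}\int_{\rn}\bigl(|\xi|^{2s}-1\bigr)\Fu^2[u](\xi)\,d\xi + \frac{1}{2s}\int_{\W}\tilde W(u),
\end{equation}
together with the pointwise limit $\frac{|\xi|^{2s}-1}{2s}\to \ln|\xi|$ as $s\to 0^+$, which explains the appearance of the logarithmic kernel in $F_0$. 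The plan is to treat the two integrals in \eqref{eq:gamma_rewrite} separately: the potential term $\frac{1}{2s}\int_\W \tilde W(u)$ blows up unless $u$ concentrates on the zero set of $\tilde W$, i.e. unless $u$ approaches $\sqrt{1-\e^2}(I_E - I_{E^c})$, and this is what forces the constraint in the definition \eqref{eq:funcional} of $F_0$; meanwhile the Fourier term converges, along sequences for which the potential term stays bounded, to $\e^2\int_{\rn}\ln|\xi|\,\Fu^2[u]\,d\xi$.

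For the \emph{lower bound}, I would take an arbitrary sequence $u_s \to u$ in $L^1(\W)$ and argue that if $\liminf_{s\to 0^+}\tilde F_s(u_s)<+\infty$ then necessarily the limit $u$ is of the admissible binary form. This follows because boundedness of $\frac{1}{2s}\int_\W\tilde W(u_s)$ forces $\int_\W \tilde W(u_s)\to 0$, and since $\tilde W\ge 0$ vanishes exactly at $\pm\sqrt{1-\e^2}$, one concludes (using $|u_s|\le 1$ from the definition of $X$ and passing to a pointwise-a.e. convergent subsequence) that $u$ takes only the two values $\pm\sqrt{1-\e^2}$, hence $u=\sqrt{1-\e^2}(I_E-I_{E^c})$ for some $E\subset\W$. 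For such sequences one then estimates the Fourier term from below, handling the fact that $|\xi|^{2s}-1$ changes sign across $|\xi|=1$: on $|\xi|\ge 1$ the integrand is nonnegative and Fatou's lemma applies to the normalized kernel $\frac{|\xi|^{2s}-1}{2s}\nearrow \ln|\xi|$, while on $|\xi|<1$ the contribution must be controlled uniformly, using that $\Fu[u_s]\to\Fu[u]$ and that the logarithmic singularity of $\ln|\xi|$ at the origin is integrable against the bounded, smooth-in-$\xi$ transforms of the uniformly bounded, compactly supported $u_s$.

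For the \emph{recovery sequence}, given a target $u = \sqrt{1-\e^2}(I_E - I_{E^c})\in X$ with $F_0(u)<+\infty$, the natural first attempt is the constant sequence $u_s\equiv u$; then the potential term vanishes identically (since $\tilde W(u)\equiv 0$), and it remains to show $\frac{\e^2}{2s}\int_{\rn}(|\xi|^{2s}-1)\Fu^2[u]\,d\xi \to \e^2\int_{\rn}\ln|\xi|\,\Fu^2[u]\,d\xi$, which is a dominated-convergence statement using $\bigl|\tfrac{|\xi|^{2s}-1}{2s}\bigr|\le C(1+|\ln|\xi||)$ for small $s$ and the decay of $\Fu[u]$. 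If the constant sequence does not by itself achieve the $\limsup$ inequality (for instance if a boundary-layer regularization of the sharp interface is needed to keep the Fourier integral finite along the approximation), I would instead build $u_s$ by mollifying the interface at a scale vanishing with $s$, chosen so that the excess potential energy $\frac{1}{2s}\int_\W\tilde W(u_s)$ still tends to zero while $\Fu^2[u_s]$ converges appropriately.

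The \textbf{main obstacle} I anticipate is the sign change of the kernel $|\xi|^{2s}-1$ together with the logarithmic singularity of the limiting weight $\ln|\xi|$ at $\xi=0$: near the origin $\ln|\xi|<0$, so the limit functional $F_0$ is not manifestly nonnegative and Fatou's lemma cannot be applied directly on all of $\rn$. Handling this requires splitting $\rn$ into $\{|\xi|<1\}$ and $\{|\xi|\ge1\}$, applying monotone/Fatou arguments on the outer region where the integrand has a fixed sign, and separately establishing \emph{uniform integrability} of the low-frequency contribution — exploiting that the $u_s$ are uniformly bounded and supported in the fixed compact set $\overline\W$, so their Fourier transforms are uniformly bounded and equicontinuous near $\xi=0$, making the integral against the integrable singularity $\ln|\xi|$ well-behaved in the limit.
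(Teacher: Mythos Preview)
Your proposal is correct and follows essentially the same route as the paper: split the Fourier integral at $|\xi|=1$, apply Fatou on $\{|\xi|>1\}$ and control $\{|\xi|\le 1\}$ via the uniform bound $|\Fu[u_s]|\le\|u_s\|_{L^1}\le|\W|$ (the paper phrases this step as reverse Fatou rather than dominated convergence), then take the constant recovery sequence $u_s\equiv u$ and pass to the limit using the monotonicity $(|\xi|^{2s}-1)/2s\searrow\ln|\xi|$. The only cosmetic difference is the order of the liminf argument---the paper first bounds the Fourier term from below and \emph{then} deduces that the potential term is bounded, whereas you assert boundedness of $\tfrac{1}{2s}\int_\W\tilde W(u_s)$ before justifying the Fourier lower bound---but since you supply both ingredients the argument reorders without difficulty, and the mollification backup for the recovery sequence is unnecessary.
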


\begin{proof}
	Let $u_s \xrightarrow[]{} u$ with $s \rightarrow 0$ in $X$, and suppose w.l.o.g, that $s$ takes values in a discrete set. First, we want to see 
	\begin{equation}
		\liminf_{s \rightarrow 0} \tilde{F}_s(u_s) \geq F_0(u).
		\label{eq:liminff}
	\end{equation}  
	Indeed, suppose that $l = \liminf_{s \to 0}{\tilde{F}_s(u_s)} \leq +\infty$, in other case there is nothing to prove. If we choose a suitable sub-sequence of $u_s$ such that $u_s \to u$ a.e. and $\tilde{F}_s(u_s) \to l$, then
	
	\begin{equation}
		l = \liminf_{s \to 0}{ \tilde{F}_s(u_s) } \geq \liminf_{s \to 0}{ \int_{\rn} \frac{|\xi|^{2s}-1}{2s}} \Fu^2[u_s](\xi) \, d\xi + \liminf_{s \to 0}{\frac{1}{2s}\int_{\Omega} \tilde{W}(u_s)}
		\label{eq:liminf}
	\end{equation}   
	
	We first analyze the left term of the right hand side of \eqref{eq:liminf}.
	In this case we have  
	
	$$ \liminf_{s \to 0}{ \int_{\rn} \frac{|\xi|^{2s}-1}{2s}} \Fu^2[u_s](\xi) \, d\xi \geq \liminf_{s \to 0}{ \int_{|\xi|>1} \frac{|\xi|^{2s}-1}{2s}} \Fu^2[u_s](\xi) \, d\xi $$ 
	$$ + \liminf_{s \to 0}{ \int_{|\xi|\leq1} \frac{|\xi|^{2s}-1}{2s}} \Fu^2[u_s](\xi) \, d\xi = (i) + (ii).$$
	
	From the fact that $u_s \to u$ in $L^1(\rn)$ norm, we have $\Fu[u_s] \to \Fu[u]$ point-wise, and we also have $(|\xi|^{2s}-1)/2s \to \ln{|\xi|}$. Then, using Fatou's Lemma, we get the estimation
	
	$$(i) \geq   \int_{|\xi|>1} \ln{|\xi|}\Fu^2[u](\xi) \, d\xi >0 $$ 
	On the other hand, since $|\Fu[u_s](\xi)| \leq \|u_s\|_{L^1(\Omega)} \leq |\W|$, we can estimate the second term as follow
	
	$$(ii) = - \limsup  \int_{|\xi|\leq1} \frac{1-|\xi|^{2s}}{2s} \Fu^2[u_s](\xi) \, d\xi \geq - \int_{|\xi|\leq1} -\ln{|\xi|} \Fu^2[u](\xi) \, d\xi$$
	\begin{equation}
		=\int_{|\xi|\leq1} \ln{|\xi|} \Fu^2[u](\xi) \, d\xi > -\infty,  
		\label{eq:est}
	\end{equation}
	where in the last inequality we have use the reverse Fatous's Lemma.

	Hence, the first term on the right hand side of \eqref{eq:liminf} must be a finite number. This implies that  
	$0 \leq \liminf_{s \to 0} \frac{1}{2s}\int_{\Omega} \tilde{W}(u_s) < +\infty, $
	and thus, $\int_{\Omega} \tilde{W}(u_s) \to 0 $ with $s \to 0$. 
	
	Since we have chosen $u_s$ in such a way that $u_s \to u$ a.e., we have that $\tilde{W}(u) = 0$ a.e., then $u$ must have the form $u = \sqrt{1 - \e^2}(I_{E} - I_{E^c}). $   
	Now, we can estimate 
	
	$$\liminf_{s \rightarrow 0} \tilde{F}_s(u_s) = \liminf_{s \to 0}{ \int_{\rn} \frac{|\xi|^{2s}-1}{2s}} \Fu^2[u_s](\xi) \, d\xi + {\frac{1}{2s}\int_{\Omega} \tilde{W}(u_s)}$$   
	
	$$\geq \int_{\rn} \ln{|\xi|} \Fu^2[u](\xi) \, d\xi  = F_0(u),$$
	and \eqref{eq:liminff} follow.
	
	Finally we only need the to verify that if $u \in X$, then 
	
	\begin{equation}
		F_0(u) \geq \limsup_{s \to 0} F_s(u)
		\label{eq:limsup}	
	\end{equation}
	To this end, suppose $u = \sqrt{1-\e^2}(I_{E} - I_{E^c})$, otherwise there is nothing to prove. In this case we have $F_s(u) = \int_{\rn} \frac{|\xi|^{2s} - 1}{2s}\Fu^2[u](\xi) \, d\xi. $
	The fact that $(|\xi|^{2s} - 1)/2s \searrow \ln{|\xi|}$ with $s \to 0$, implies that $\tilde{F}_s(u)$ is decreasing in $s$. Then
	
	$$ \limsup_{s \to 0} \tilde{F}_s(u) =   \lim_{s \to 0} \tilde{F}_s(u) = \lim_{s \to 0} \int_{\rn} \frac{|\xi|^{2s} - 1}{2s}\Fu^2[u](\xi) \, d\xi $$ 
	$$= \lim_{s \to 0} \Big( \int_{|\xi|\leq 1} + \int_{|\xi|> 1} \Big) \frac{|\xi|^{2s} - 1}{2s}\Fu^2[u](\xi) \, d\xi. $$
	Then, using Monotone Convergence Theorem on the integral over $|\xi|>1$, and Dominated Convergence Theorem over $|\xi| \leq 1$, we have $\limsup_{s \to 0} \tilde{F}_s(u) = F_0(u), $
	which proves \eqref{eq:limsup}.

\end{proof}


\subsection{Equi-coerciveness of $\tilde{F}_s$} 

To complete the analysis we prove the equi-coerciveness of $\{\tilde{F}_s\}_s$.  

\begin{theorem}
	Suppose $s_n \to 0$, and $\{u_n\}_{n \in \N} \subset X$, such that $\tilde{F}_{s_n}(u_n) \leq C$ for all $n \in \N$. Then there exist $u \in X$ and a subsequence $\{u_{n_j}\}_{j \in \N}$, such that $u_{n_j} \to u$ in $X$.  
\end{theorem}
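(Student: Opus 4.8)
The plan is to verify the hypotheses of the Fréchet--Kolmogorov (Kolmogorov--Riesz) compactness theorem in $L^2(\rn)$. Since each $u_n$ is supported in the bounded set $\bar\W$ and satisfies $|u_n|\le 1$, the sequence is automatically bounded in $\ele$ and uniformly tight at infinity (the tails $\int_{|x|>\rho}|u_n|^2$ vanish for $\rho$ large), so the only nontrivial requirement is uniform equicontinuity of translations, $\sup_n\|u_n(\cdot+h)-u_n\|_{\ele}\to 0$ as $|h|\to 0$. The entire difficulty is to extract this from the single energy bound $\tilde F_{s_n}(u_n)\le C$ \emph{uniformly in the vanishing parameter} $s_n$; once it is available, a convergent subsequence in $L^2(\rn)$, and hence in $L^1(\W)$, follows immediately.

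First I would isolate a frequency bound from the energy. Using the decomposition of $\tilde F_s$ obtained above and discarding the nonnegative potential term (recall that the redefined $\tilde W\ge 0$, since it vanishes at its minimizers $\pm\sqrt{1-\e^2}$), the bound $\tilde F_{s_n}(u_n)\le C$ gives control of $\int_{\rn}\frac{|\xi|^{2s_n}-1}{2s_n}\Fu^2[u_n](\xi)\,d\xi$. The contribution of $\{|\xi|\le 1\}$ is negative, so I would bound it from below using $|\Fu[u_n](\xi)|\le\|u_n\|_{L^1(\W)}\le|\W|$ together with the elementary inequality $\frac{1-|\xi|^{2s}}{2s}\le-\ln|\xi|$ (from $1-e^{-t}\le t$), whose right-hand side is integrable over the unit ball in any dimension. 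This produces a constant $C'$, independent of $n$, with $\int_{|\xi|>1}\frac{|\xi|^{2s_n}-1}{2s_n}\Fu^2[u_n](\xi)\,d\xi\le C'$.

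The crux is the ensuing uniform tail estimate. For $|\xi|>R$ with $R>1$ and \emph{any} $s>0$, monotonicity of $|\xi|^{2s}$ in $|\xi|$ and the inequality $e^x-1\ge x$ give $\frac{|\xi|^{2s}-1}{2s}\ge\frac{R^{2s}-1}{2s}\ge\ln R$; dividing the previous bound by $\ln R$ yields $\int_{|\xi|>R}\Fu^2[u_n](\xi)\,d\xi\le C'/\ln R$, which tends to $0$ as $R\to\infty$ uniformly in $n$. This is precisely where the uniformity in $s_n$ is secured, and I expect it to be the main obstacle of the argument. Translation control then follows from Plancherel's identity: splitting at radius $R$ and using $2(1-\cos(\xi\cdot h))\le\min(|\xi|^2|h|^2,4)$, one obtains $\|u_n(\cdot+h)-u_n\|_{\ele}^2\le R^2|h|^2|\W|+4C'/\ln R$. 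Given $\eta>0$, I would first choose $R$ so large that $4C'/\ln R<\eta/2$ and then $|h|$ so small that $R^2|h|^2|\W|<\eta/2$, giving the required equicontinuity uniformly in $n$.

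With all hypotheses verified, Fréchet--Kolmogorov provides a subsequence $u_{n_j}\to u$ in $L^2(\rn)$; as $\W$ is bounded and $|u_{n_j}|\le 1$, this is also convergence in $L^1(\W)$, i.e.\ in $X$. Finally, passing to a further a.e.-convergent subsequence shows $|u|\le 1$ a.e.\ and $u\equiv0$ on $\W^c$, so the limit belongs to $X$, which completes the proof.
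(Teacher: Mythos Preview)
Your argument is correct. Both your proof and the paper's hinge on the same crucial step: turning the uniform energy bound into a uniform Fourier-tail estimate $\int_{|\xi|>R}\Fu^2[u_n]\,d\xi\le C'/\ln R$, obtained by bounding the low-frequency contribution via $|\Fu[u_n]|\le|\W|$ and the elementary inequality $\frac{|\xi|^{2s}-1}{2s}\ge\ln|\xi|$.

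Where you diverge is in how this tail control is converted into strong $L^2$ compactness. The paper first extracts a weakly convergent subsequence $u_{n_j}\rightharpoonup u$ in $L^2(\W)$, then uses the tail estimate together with pointwise convergence of $\Fu[u_{n_j}]$ and dominated convergence on $\{|\xi|\le R\}$ to show $\|\Fu[u_{n_j}]\|_{L^2}\to\|\Fu[u]\|_{L^2}$, upgrading weak to strong convergence. You instead feed the tail estimate directly into the Plancherel identity for translations, obtaining uniform $L^2$-equicontinuity and invoking Fr\'echet--Kolmogorov. Your route is arguably more self-contained, since it bypasses the weak-compactness step and the separate verification of norm convergence; the paper's route, on the other hand, makes the limit $u$ explicit from the outset and avoids the translation computation. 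Both are short and standard once the tail bound is in hand.
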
  

\begin{proof}
	First we observe that, as before, we have
	
	$$ \tilde{F}_{s_n}(u_n) = \int_{\rn} \frac{|\xi|^{2s_n}-1}{2s_n}\Fu^2(u_n)[\xi] \, d\xi +  \frac{1}{2s_n}\int_{\W} \tilde{W}(u_n) \, dx.$$
	Since $(|\xi|^{2s} - 1)/2s \searrow \ln{|\xi|}$ and $\tilde{F}_{s_n}(u_n) \leq C$ for all $n \in \N$, we can assert that
	
	\begin{equation}
		\label{eq:cota_uniforme}
		\int_{\rn} \ln|\xi|\Fu^2(u_n)[\xi] \, d\xi \leq C, \quad \forall n \in \N.
	\end{equation}
	The fact that $u_{n} \in X$, implies $\|u_{n}\|_{L^1(\W)} \leq C_0$ for all $n \in \N$, and then 
	
	\begin{equation}
		\label{eq:cota_inf}
		\int_{|\xi|\leq 1} \ln|\xi|\Fu^2(u_n)[\xi] \, d\xi \geq C^2_0\int_{|\xi|\leq 1} \ln|\xi| \, d\xi \geq C_1.
	\end{equation}
	
	Now we want to show that \eqref{eq:cota_uniforme} implies that functions in the set $\{\Fu^2(u_n)\}_n$ keep a substantial part of their mass uniformly  bounded. Namely, given $\eta > 0$, there exist $R>0$ such that 
	
	\begin{equation}
		\label{eq:concentracion}
		\int_{|\xi|>R} \Fu^2(u_n)[\xi] \, d\xi \leq \eta, \quad \forall n \in \N.
	\end{equation} 
	By contradiction suppose that there exist $\eta_0 > 0$ such that for every $R>0$ there is a number $m = m(R,\eta_0) \in \N$, in such a way that 
	
	$$\int_{|\xi|>R} \Fu^2(u_m)[\xi] \, d\xi > \eta_0.$$
	Choosing $R>0$ such that $\eta_0 \ln (R) + C_1 > C$, with $C$ the constant in \eqref{eq:cota_uniforme}, and $C_1$ the one in \eqref{eq:cota_inf} , we have
	
	$$\int_{\rn} \ln|\xi|\Fu^2(u_m)[\xi] \, d\xi > \int_{|\xi|>R} \ln|\xi|\Fu^2(u_m)[\xi] \, d\xi + \int_{|\xi|\leq 1} \ln|\xi|\Fu^2(u_m)[\xi] \, d\xi $$ 
	$$ >  \ln(R)\eta_0 + C_1 > C,$$
	which, in view of \eqref{eq:cota_uniforme} results in a contradiction. Hence, assertion \eqref{eq:concentracion} holds. 
	
	On the other hand, the fact that $\{u_n\}_{n \in \N} \subset X$, implies that this sequence is uniformly bounded in $L^2(\W)$, and then, we can extract a weakly convergent subsequence $\{u_{n_j}\}_{j \in \N}$. Let $u \in L^2(\W)$ such that $u_{n_j} \rightharpoonup u$, our goal now is to show that $u_{n_j} \to u$ strongly in $L^{2}(\W)$, which implies strong convergence in $L^1(\W)$  since $|\W|<\infty$.  
	
	To this end, we only need to verify that $\|u_{n_j}\|_{L^2(\W)} \to \|u\|_{L^2(\W)}$ or, equivalently, $\|\Fu(u_{n_j})\|_{L^2(\rn)} \to \|\Fu(u)\|_{L^2(\rn)}$. From \eqref{eq:concentracion}, and the fact that $u \in L^2(\W)$, we can take $R$ large enough, in such a way that $\int_{|\xi|> R} \Fu^2(u_{n_j})[\xi] \,d\xi < \eta, \quad \forall j \in \N,$
	and
	$\int_{|\xi|> R} \Fu^2(u)[\xi] \,d\xi < \eta.$ 
	Then we have 
	\begin{equation}
		\label{eq:convergencia}
		\Big| \|\Fu(u_{n_j})\|^2_{L^2(\rn)} - \|\Fu(u)\|^2_{L^2(\rn)} \Big| \leq  
	\end{equation}
	$$\Big| \int_{|\xi| \leq R} \Fu^2(u_{n_j})[\xi] \, d\xi- \int_{|\xi| \leq R} \Fu^2(u)[\xi]\, d\xi \Big| + 2\eta.$$   
	Since $u_{n_j}$ is supported in $\W$, for all $j \in \N$, weak convergence implies $\Fu(u_{n_j})[\xi] \to \Fu(u)[\xi]$ for all $\xi \in \rn$. And, as we have observed before, since $\|u_{n_j}\|_{L^1(\W)} \leq C_0$ for all $j \in \N$, $\Fu^2(u_{n_j})[\xi] \leq C^2_0$ for all $\xi \in \rn$. Hence, we can apply Dominated Convergence Theorem in \eqref{eq:convergencia} and say that there exists $j_0$ in such a way that if $j>j_0$ then               
	
	\begin{equation}
		\label{eq:convergencia2}
		\Big| \|\Fu(u_{n_j})\|^2_{L^2(\rn)} - \|\Fu(u)\|^2_{L^2(\rn)} \Big| \leq 3\eta. 
	\end{equation}
	Since $\eta$ can be arbitrary small, we have $\|u_{n_j}\|_{L^2(\W)} \to \|u\|_{L^2(\W)}$, and the statement of the theorem follows.

\end{proof} 

\section{Numerical experiments} \label{sec:numerical}

In this section, three numerical examples are presented in order to explore the behavior of the solution under fractional parameters $s$ and $\alpha$.    

For the first example, we have used $\W = [-1,1]$, a uniform mesh consisting of $3000$ nodes, $s = 0.005$, $\alpha = 1$, $\eps^2=0.5$, and the function $v = -0.5 I_{(-1,0)} +  0.5 I_{[0,1)}$ as initial data. Here, the aim is to obtain some experimental support for the ideas displayed in section \ref{sec:asymptotic}, that is, the behavior with a small parameter $s$. Numerical results are summarized in Figure \ref{ejemplo_1}, and equilibrium values far from $1$ and $-1$ can be observed. Furthermore, equilibrium values seem to be placed near the values predicted in section \ref{sec:asymptotic} or, in other words, the solution seems to approximate a minimizer of \eqref{eq:funcional}.

\begin{figure}[h]
	\includegraphics[width=65mm]{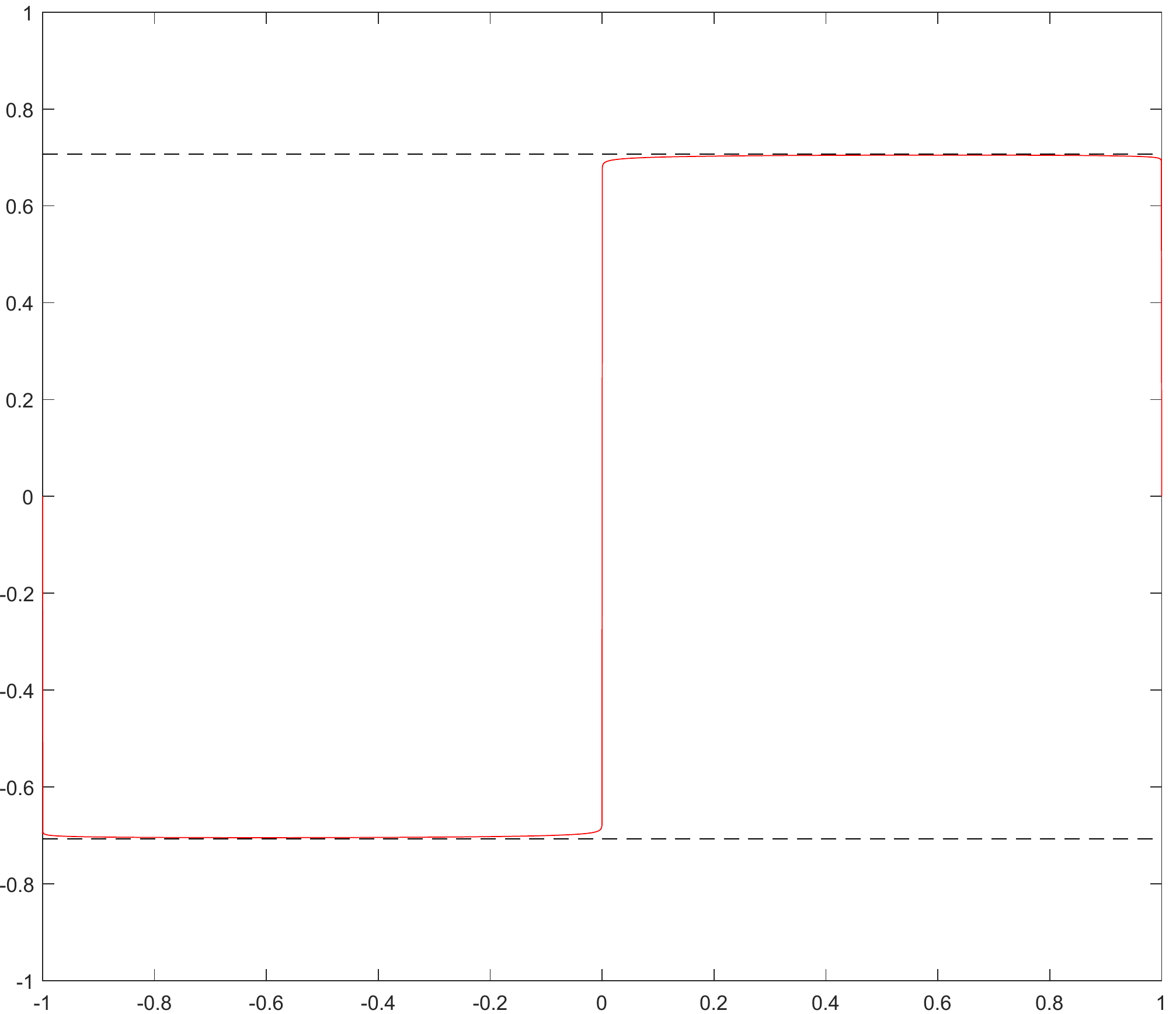}
	\caption{In red the solution of example 1 at $t = 50$, in black-dashed the values $\pm\sqrt{1-\eps^2}$. It can be seen that the equilibrium states remain far from $1$ and $-1$, unlike the behavior in the classic AC equation, and approach the values predicted in section \ref{sec:asymptotic} (see \eqref{eq:funcional}).} \label{ejemplo_1}
\end{figure}

Example 2 and 3 (spinodal decomposition) are shown in Figure \ref{ejemplo_s} and \ref{ejemplo_alfa} respectively. Here we have used $\W$ as the unitary ball, a uniform triangulation consisting of $16554$ triangles, $\eps^2 = 0.02$, and random noise as initial data. In example 2 (Figure \ref{ejemplo_s}), the parameter $\alpha$ is fixed in $1$, and results for several values of $s$ are shown. Can be observed the fact that, as we have mentioned in section \ref{sec:asymptotic},  the smaller the parameter $s$, the thinner the interface. Finally, example 3 (Figure \ref{ejemplo_alfa}) shows the behavior for fractional values of the parameter $\alpha$, with $s = 1$.        

\begin{figure}[h]
	\centering
	\begin{tabular}{|c|c|c|c|}
		\hline
		\subf{\includegraphics[width=22mm]{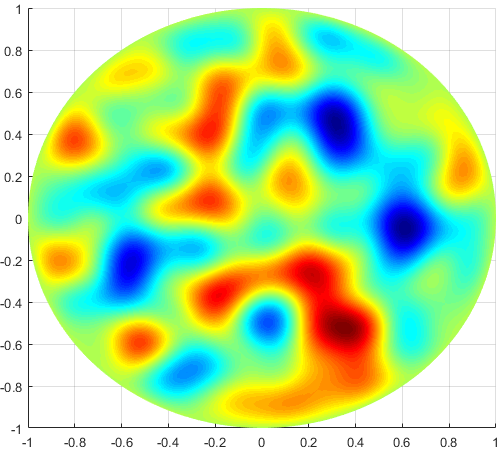}}
		{$s = 1$, $t = 2.5$}
		&
		\subf{\includegraphics[width=22mm]{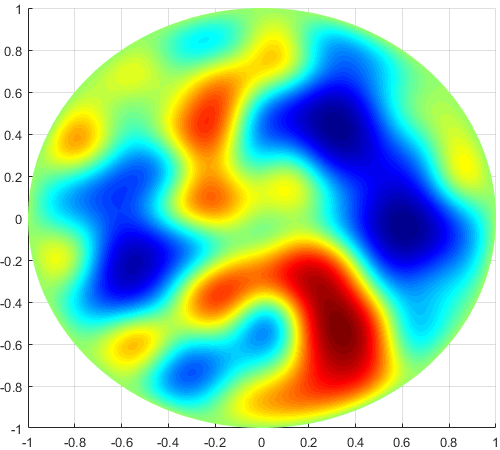}}
		{$s = 1$, $t = 5$}
		&
		\subf{\includegraphics[width=22mm]{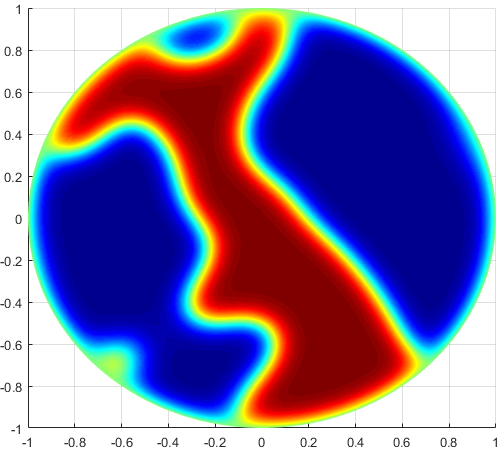}}
		{$s = 1$, $t = 10$}
		\\
		\hline
		\subf{\includegraphics[width=22mm]{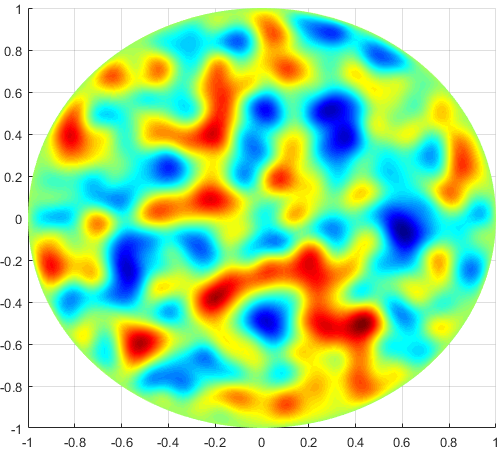}}
		{$s = 0.85$, $t = 2.5$}
		&
		\subf{\includegraphics[width=22mm]{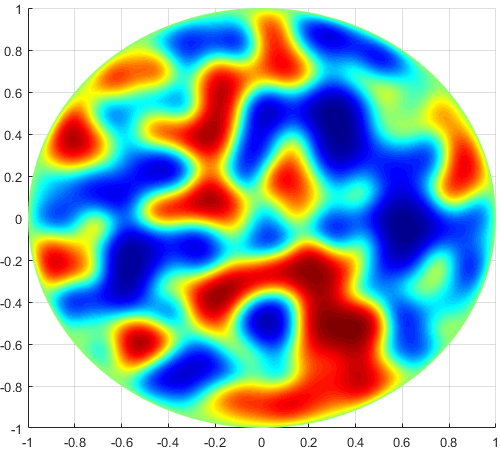}}
		{$s = 0.85$, $t = 5$}
		&
		\subf{\includegraphics[width=22mm]{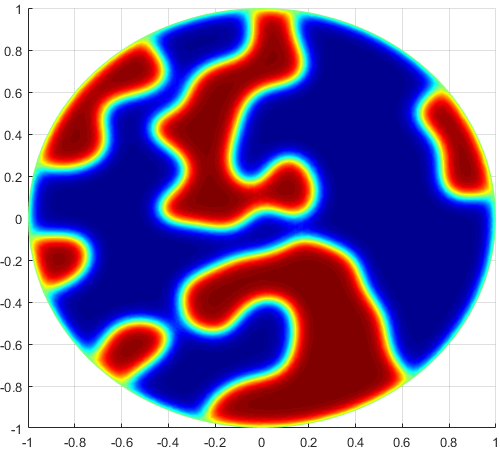}}
		{$s = 0.85$, $t = 10$}
		\\
		\hline
		\subf{\includegraphics[width=22mm]{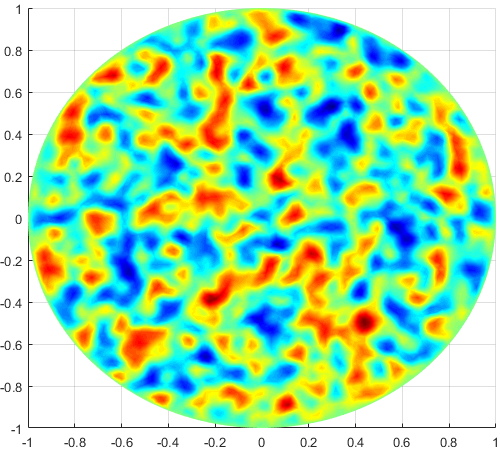}}
		{$s = 0.7$, $t = 2.5$}
		&
		\subf{\includegraphics[width=22mm]{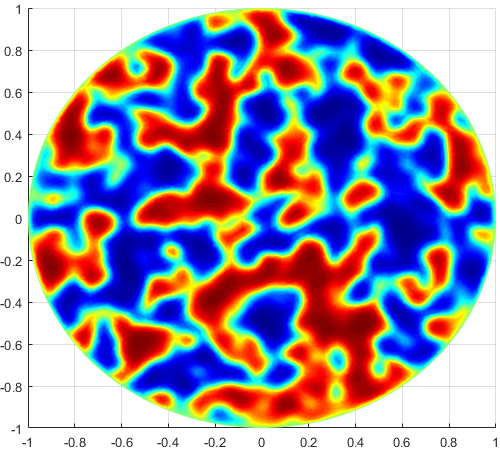}}
		{$s = 0.7$, $t = 5$}
		&
		\subf{\includegraphics[width=22mm]{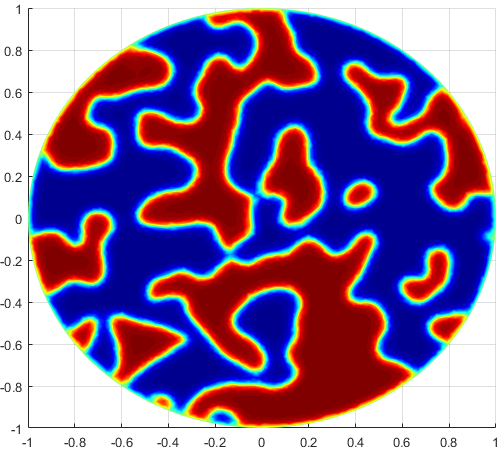}}
		{$s = 0.7$, $t = 10$}
		\\
		\hline
	\end{tabular}
	\caption{In this example we set $\Omega = B(0,1)$, $\alpha = 1$, and random noise as initial condition. The evolution in time is displayed for several values of $s$.}
	\label{ejemplo_s}
\end{figure}

\begin{figure}[h]
	\centering
	\begin{tabular}{|c|c|c|c|}
		\hline
		\subf{\includegraphics[width=22mm]{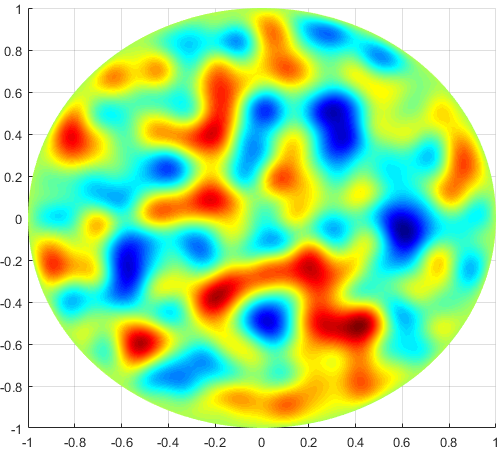}}
		{$\alpha = 1$, $t = 1.25$}
		&
		\subf{\includegraphics[width=22mm]{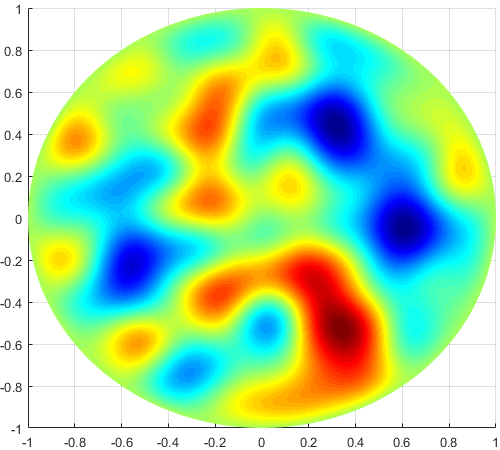}}
		{$\alpha = 1$, $t = 3.75$}
		&
		\subf{\includegraphics[width=22mm]{s1_200.png}}
		{$\alpha = 1$, $t = 10$}
		\\
		\hline
		\subf{\includegraphics[width=22mm]{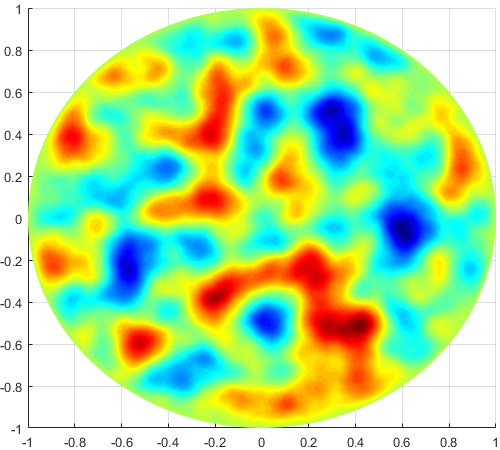}}
		{$\alpha = 0.7$, $t = 1.25$}
		&
		\subf{\includegraphics[width=22mm]{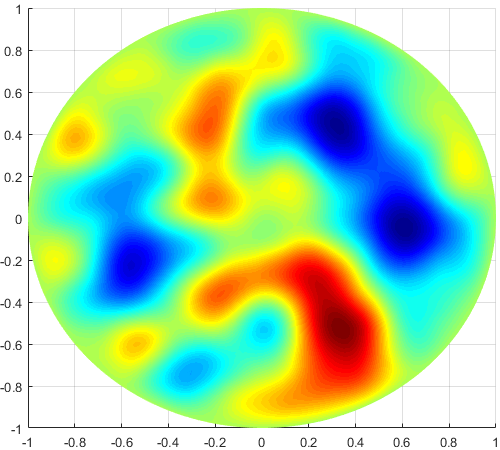}}
		{$\alpha = 0.7$, $t = 3.25$}
		&
		\subf{\includegraphics[width=22mm]{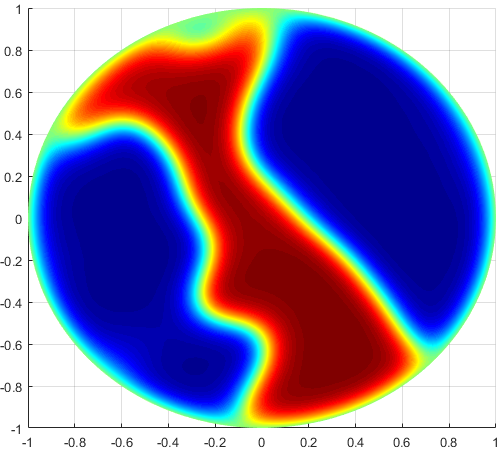}}
		{$\alpha = 0.7$, $t = 10$}
		\\
		\hline
		\subf{\includegraphics[width=22mm]{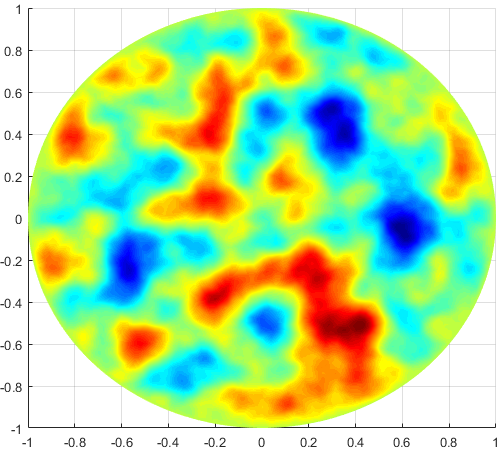}}
		{$\alpha = 0.4$, $t = 1.25$}
		&
		\subf{\includegraphics[width=22mm]{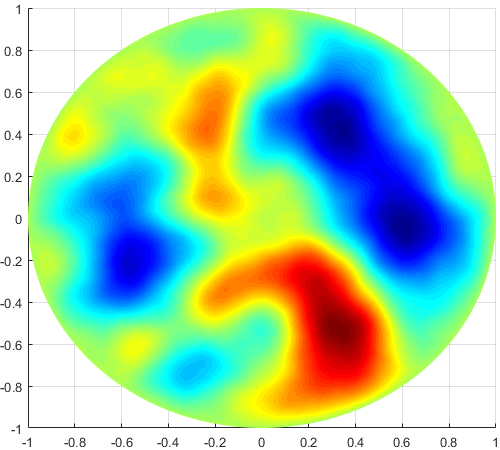}}
		{$\alpha = 0.4$, $t = 3.25$}
		&
		\subf{\includegraphics[width=22mm]{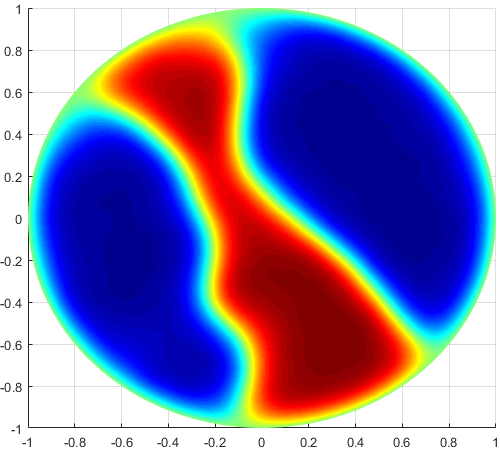}}
		{$\alpha = 0.4$, $t = 10$}
		\\
		\hline
	\end{tabular}
	\caption{In this example we set $\Omega = B(0,1)$, $s = 1$, and random noise as initial condition. The evolution in time is displayed for several values of $\alpha$.}
	\label{ejemplo_alfa}
\end{figure}

\appendix 

\section{Auxiliary results}
\label{appendix_AC}
\begin{lemma}
	\label{lem:leinbiz}
	Let $f \in C([0,T],\ele)$, with $f$ differentiable in $(0,T)$ in such a way that $\| f'(t) \|_{\ele} \leq C t^{-\gamma}$ with $\gamma \in (0,1)$ for all $t \in (0,T)$. Then we have 
	\begin{equation}
		\label{eq:leibniz0}
		\partial_t \left( \int^t_0 f(t-s) \, ds \right)= f(0) + \int^t_0 \partial_t f(t-s) \,ds \quad \forall t \in (0,T).
	\end{equation}

\end{lemma}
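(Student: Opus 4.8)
The plan is to reduce the asserted Leibniz-type identity to the two halves of the fundamental theorem of calculus in the Bochner setting, the only delicate point being that $f'$ is allowed to blow up like $t^{-\gamma}$ at the origin. The starting observation is that the change of variables $r = t-s$ turns both integrals into integrals over $(0,t)$ of a single-variable integrand: writing $I(t) := \int_0^t f(t-s)\,ds$, one has $I(t) = \int_0^t f(r)\,dr$, and likewise $\int_0^t \partial_t f(t-s)\,ds = \int_0^t f'(r)\,dr$, where $f'(r)$ denotes the strong ($\ele$-valued) derivative of $f$ at $r$. Hence the claim is equivalent to
$$\frac{d}{dt}\int_0^t f(r)\,dr = f(0) + \int_0^t f'(r)\,dr,$$
and I will show that both sides equal $f(t)$.

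First I would handle the left-hand side. Since $f \in C([0,T],\ele)$, the Bochner integral $\int_0^t f(r)\,dr$ is a continuously differentiable function of $t$ with $\frac{d}{dt}\int_0^t f(r)\,dr = f(t)$; this is the standard differentiation of a Bochner integral with respect to its upper limit for a continuous integrand.

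The substance of the lemma lies in the right-hand side, where I must justify that $\int_0^t f'(r)\,dr = f(t) - f(0)$ despite the singularity of $f'$ at $0$. The map $r \mapsto f'(r)$ is strongly measurable on $(0,t)$, being a pointwise limit of difference quotients of the continuous function $f$ into the separable space $\ele$, and the hypothesis $\|f'(r)\|_{\ele} \le C r^{-\gamma}$ with $\gamma \in (0,1)$ makes $r \mapsto C r^{-\gamma}$ integrable on $(0,t)$; hence $f'$ is Bochner integrable there. For any fixed $\varepsilon \in (0,t)$, the function $f$ is continuous on $[\varepsilon,t]$ and differentiable on $(\varepsilon,t)$ with Bochner-integrable derivative, so the vector-valued fundamental theorem of calculus gives $\int_\varepsilon^t f'(r)\,dr = f(t) - f(\varepsilon)$. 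I would verify this by pairing with an arbitrary functional $\phi \in \ele^*$, which reduces matters to the scalar statement that an everywhere-differentiable function with Lebesgue-integrable derivative is the integral of that derivative, and then invoking that $\ele^*$ separates points.

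Finally I would pass to the limit $\varepsilon \to 0^+$. The right-hand side $f(t) - f(\varepsilon)$ converges to $f(t) - f(0)$ by continuity of $f$ at $0$, while the left-hand side converges to $\int_0^t f'(r)\,dr$ by the absolute continuity of the Bochner integral, since $\|\int_\varepsilon^t f'(r)\,dr - \int_0^t f'(r)\,dr\|_{\ele} \le \int_0^\varepsilon \|f'(r)\|_{\ele}\,dr \le C\int_0^\varepsilon r^{-\gamma}\,dr \to 0$. Combining, $f(0) + \int_0^t f'(r)\,dr = f(t)$, which matches the value of the left-hand side computed above and proves the identity. The one step that genuinely requires the hypotheses, and the only real obstacle, is the fundamental theorem of calculus for the singular derivative; everything else is a routine manipulation of Bochner integrals, and the role of the assumption $\gamma<1$ is precisely to guarantee the integrability that makes this step go through.
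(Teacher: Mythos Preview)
Your proof is correct and takes a different route from the paper's. The paper does not change variables at the outset; instead it computes the difference quotient directly, splitting
\[
\int_0^{t+h} f(t+h-s)\,ds - \int_0^t f(t-s)\,ds = \int_0^t \bigl(f(t+h-s)-f(t-s)\bigr)\,ds + \int_t^{t+h} f(t+h-s)\,ds,
\]
handles the first piece by the mean value inequality in Banach spaces (which yields the dominating function $Ch(t-s)^{-\gamma}$) together with dominated convergence, and shows the second piece divided by $h$ tends to $f(0)$ by continuity of $f$ at $0$. Your substitution $r=t-s$ collapses the identity to $\frac{d}{dt}\int_0^t f = f(t) = f(0) + \int_0^t f'$, so the entire content becomes the fundamental theorem of calculus for a Bochner-integrable derivative with a power singularity at the origin, which you dispatch by a cutoff at $\varepsilon$ and a limiting argument. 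Your route is shorter and isolates more transparently where the hypothesis $\gamma<1$ is used; the paper's direct difference-quotient argument, on the other hand, adapts more readily to integrands of the form $h(s,t-s)$ (as in the application to $\int_0^t F^{\alpha}(s)\,\fmonio(u(t-s))\,ds$ in the existence proof), where the change of variables no longer eliminates the $t$-dependence of the integrand.
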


\begin{proof}
	We write 
	\begin{equation}
		\label{eq:leibniz1}
		\int^{t+h}_0 f(t+h -s) \, ds - \int^{t}_0 f(t-s) \, ds
	\end{equation}
	$$= \int^{t}_0 f(t+h -s) - f(t-s) \, ds + \int^{t+h}_t f(t+h -s) \, ds.$$
	From the mean value inequality in Banach spaces (see, for instance, \cite[Appendix B]{tesismikkola}) we can estimate
	
	$$\|f(t+h-s) - f(t-s)\|_{\ele} \leq  h\|f'(r-s)\|_{\ele}$$
	$$\leq Ch(r-s)^{-\gamma} \leq Ch(t-s)^{-\gamma}, \quad \forall s \in [0,t).$$
	This, along with the fact that $f'$ exists in $(0,T)$, allows us to use the Domitated Convergence Theorem (see \cite[Appendix B]{tesismikkola}) and get 
	\begin{equation}
		\label{eq:leibniz2}
		\lim_{h \to 0} \int^{t}_0 \frac{f(t+h -s) - f(t-s)}{h} \, ds = \int^{t}_0 f'(t -s) \, ds.
	\end{equation}

	On the other hand, from the fact that $f(t)$ is a continuous function in $t = 0$, we have
	
	\begin{equation}
		\label{eq:leibniz3}
		\frac{1}{h} \int^{t+h}_{t}f(t+h-s) \, ds = \frac{1}{h} \int^{0}_{h}-f(r) \, dr = \frac{1}{h} \int^{h}_{0}f(r) \, dr \xrightarrow[h \to 0]{} f(0),
	\end{equation}
	
	where the convergence is in $\ele$ sense.
	
	Finally, combining \eqref{eq:leibniz1}, \eqref{eq:leibniz2} and \eqref{eq:leibniz3}, we obtain \eqref{eq:leibniz0}.

\end{proof}

\begin{lemma}
	\label{lem:gronwall_discreto}
	For $\tau>0$ consider $t_n = n\tau$ with $n = 1,...,N$ and $T=N\tau$. If
	
	\begin{equation}
		\label{eq:gronwall_dis1}
		\varphi_n \leq At_n^{-1+\alpha} + B\tau \sum^{n-1}_{j=1} t^{-1+\beta}_{n-j} \varphi_j, 
	\end{equation}
	for all $n = 1,...,N$, with some constants $A$, $B \geq 0$, and $\alpha$, $\beta > 0$, then there exist a constant $C = C(B,T,\alpha,\beta)$ such that
	
	\begin{equation}
		\label{eq:gronwall_dis2}
		\varphi_n \leq CAt^{-1+\alpha}_n, \qquad n = 1,...,N.
	\end{equation}	
	
\end{lemma}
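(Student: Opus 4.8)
The plan is to mimic the proof of the continuous Gronwall inequality of Lemma \ref{gronwall} by iterating the defining inequality \eqref{eq:gronwall_dis1} and summing the resulting series, rather than attempting a naive strong induction. A single induction with the ansatz $\varphi_n \le CAt_n^{-1+\alpha}$ only closes when $BT^{\beta}$ is small (it leaves a residual factor $1+BC'T^{\beta}$ that must be absorbed into $C$), so it would force a subdivision of $[0,T]$ and a patching argument; the series approach handles arbitrary $T$ uniformly and in one stroke.

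The central ingredient I would isolate first is a discrete convolution estimate playing the role of the Beta integral $\int_0^t (t-s)^{-1+\beta}s^{-1+\gamma}\,ds = B(\gamma,\beta)\,t^{-1+\gamma+\beta}$. Concretely, for $\gamma,\beta>0$ I would prove
\begin{equation*}
\tau \sum_{j=1}^{n-1} t_{n-j}^{-1+\beta}\, t_j^{-1+\gamma} \le C\, B(\gamma,\beta)\, t_n^{-1+\gamma+\beta},
\end{equation*}
by writing $t_j=j\tau$, factoring out $\tau^{-1+\gamma+\beta}=t_n^{-1+\gamma+\beta}\,n^{1-\gamma-\beta}$, and comparing the remaining sum $\sum_{j=1}^{n-1}(n-j)^{-1+\beta}j^{-1+\gamma}$ with $n^{\gamma+\beta-1}B(\gamma,\beta)$. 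The endpoints $j\to 1$ and $j\to n-1$, where one factor carries a negative exponent, require the standard care of splitting the sum over $j\le n/2$ and $j\ge n/2$ and bounding each monotone piece by the corresponding integral.

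With this estimate in hand, I would iterate \eqref{eq:gronwall_dis1}: substituting the bound for $\varphi_j$ back into the sum produces, after $m$ steps, a term controlled by $A\,(BC)^m\,(\prod_{k=0}^{m-1} B(\alpha+k\beta,\beta))\, t_n^{-1+\alpha+m\beta}$, where at the $k$-th level the convolution estimate is applied with $\gamma=\alpha+k\beta$. The Beta product telescopes through the Gamma function as $\prod_{k=0}^{m-1}B(\alpha+k\beta,\beta)=\Gamma(\alpha)\,\Gamma(\beta)^m/\Gamma(\alpha+m\beta)$. Using $t_n\le T$ to replace $t_n^{m\beta}$ by $T^{m\beta}$ and summing over $m$ yields
\begin{equation*}
\varphi_n \le A\,t_n^{-1+\alpha}\,\Gamma(\alpha)\sum_{m=0}^{\infty} \frac{(BC\,\Gamma(\beta)\,T^{\beta})^m}{\Gamma(\alpha+m\beta)} = A\,t_n^{-1+\alpha}\,\Gamma(\alpha)\,E_{\beta,\alpha}(BC\,\Gamma(\beta)\,T^{\beta}),
\end{equation*}
and since the Mittag-Leffler function $E_{\beta,\alpha}$ from \eqref{eq:Mittag-Leffler} is entire, its value at the finite argument is a constant $C=C(B,T,\alpha,\beta)$, which is exactly \eqref{eq:gronwall_dis2}.

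The hard part will be the convolution estimate together with the bookkeeping of constants through the iteration. Because the $k$-th level invokes the estimate with exponent $\gamma=\alpha+k\beta\to\infty$, I must ensure that the comparison constant $C$ between the discrete sum and its Beta integral can be taken \emph{uniform} in $\gamma$; only then does the geometric factor $(BC)^m$ emerge cleanly while the genuine Beta values telescope. Verifying this uniformity—and checking that the formal iteration is legitimate, which it is because for fixed $n$ only finitely many terms are nonzero and all contributions are nonnegative—is the crux of the argument. The strict hypotheses $\alpha,\beta>0$ of the lemma are precisely what the convolution estimate and the convergence of the series require.
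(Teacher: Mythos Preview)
Your approach is correct but differs from the paper's. The paper also starts with the discrete Beta-convolution comparison
\[
\tau\sum_{j=0}^{n-1} t_{n-j}^{-1+\gamma}\,t_{j+1}^{-1+\delta}\le B(\gamma,\delta)\,t_n^{-1+\gamma+\delta},
\]
but then iterates \eqref{eq:gronwall_dis1} only \emph{finitely} many times---precisely $k-1$ times, where $k$ is the smallest integer with $-1+k\beta>0$. After those few steps the convolution kernel $t_{n-j}^{-1+k\beta}$ is bounded by $T^{-1+k\beta}$, and the inequality collapses to a standard (non-singular) discrete Gronwall; one more rescaling $\psi_n=t_n^{1-\alpha}\varphi_n$ handles the case $\alpha<1$. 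This sidesteps exactly the issue you flag: since only $\gamma=\alpha,\alpha+\beta,\dots,\alpha+(k-1)\beta$ ever appear, the comparison constants are automatically under control and no uniformity-in-$\gamma$ is needed.

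Your infinite-iteration route, summing to the Mittag--Leffler series $E_{\beta,\alpha}$, is the classical proof of the continuous singular Gronwall (Lemma~\ref{gronwall}) transported to the grid, and it yields an explicit constant. The uniformity you worry about does in fact hold: once $\gamma=\alpha+k\beta\ge 1$ the integrand $s^{-1+\gamma}$ is nondecreasing and (assuming w.l.o.g.\ $\beta\le 1$) $(t_n-s)^{-1+\beta}$ is also nondecreasing on each $[t_j,t_{j+1}]$, so the Riemann sum is dominated by the integral with constant $1$; the finitely many levels with $\gamma<1$ contribute a fixed factor depending only on $\alpha,\beta$. So both arguments go through; the paper's is shorter because it stops iterating as soon as it can hand off to the elementary Gronwall lemma, while yours is self-contained and gives a sharper description of the final constant.
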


\begin{proof}
	First, we observe the following relation. For $\gamma$, $\delta > 0$ we have
	
	\begin{equation}
		\label{eq:gronwall_dis3}
		\tau \sum_{j=0}^{n-1} t^{-1+\gamma}_{n-j} t^{-1+\delta}_{j+1} \leq \sum_{j=0}^{n-1} \int_{t_j}^{t_{j+1}}(t_n - s)^{-1+\gamma}s^{-1+\delta} \,ds 
	\end{equation}
	
	$$ \leq \int_{0}^{t_n}(t_n - s)^{-1+\gamma}s^{-1+\delta} \,ds = B(\gamma,\delta)t^{-1+\gamma + \delta }_n,$$
	where $B$ denotes the beta function. 
	
	Now, if we choose the smallest $k=k(\beta) \in \mathbb{N}$ such that $-1+k\beta>0$ and iterate \eqref{eq:gronwall_dis1} $k-1$ times, using relation \eqref{eq:gronwall_dis2} we obtain
	
	\begin{equation}
		\label{eq:gronwall_dis4}
		\varphi_n \leq D_1 A t^{-1+\alpha} + D_2 \tau \sum_{j=1}^{n-1} t^{-1+k\beta}_{n-j}\varphi_j \leq D_1 A t^{-1+\alpha} + D_2   T^{-1+k\beta}\tau\sum_{j=1}^{n-1}\varphi_j,
	\end{equation}  
	with $D_1 = D_1(C_2,T,\beta)$ and $D_2 = D_2(C_2,\beta)$. If $-1 + \alpha \geq 0$ we can derive \eqref{eq:gronwall_dis2} by means of a standard discrete Gronwall type inequality. Otherwise, we take $\psi_n = t_n^{1-\alpha} \varphi_n$ and using \eqref{eq:gronwall_dis4} we obtain 
	
	\begin{equation*}
		\psi_n \leq D_1 A + D_2 T^{-1+k\beta}\tau\sum_{j=1}^{n-1} t^{-1+\alpha}_j\psi_j,
	\end{equation*}  
	and deduce $t_n^{1-\alpha} \varphi_n \leq D_3 A$, again by means of a standard Gronwall type inequality.  
	
\end{proof}

\begin{proof}\textit{(of Lemma \ref{lem:lemma_seq})}
	From the definition of $\{\wtilde_n\}_{n \in \mathbb{N}_0}$, we know that
	$(1 - \xi)^{\alpha} = \sum^{\infty}_{j=0} \wtilde_j \xi^j. $
	Then, defining $g(\xi) = 1 - (1 - \xi)^{\alpha}$, and recalling that $w_0 = 1$,  we have $g(\xi) = \sum^{\infty}_{j=1} -\wtilde_j \xi^j. $
	Now, defining $f(\xi) = \sum^{\infty}_{j=0} c_j \xi^j$, from the definition of $\{c_n\}_{n \in \mathbb{N}_0}$, and using the Cauchy product for power series, the following equality can be easily checked, 
	
	\begin{equation}
		\label{eq:equal_app}
		f(\xi) \frac{g(\xi)}{\xi} = \frac{f(\xi) - c_0}{\xi}.
	\end{equation} 
	Recalling that $c_0 = 1$, and $-\wtilde_1 = \alpha$, from \eqref{eq:equal_app} we can obtain an explicit expression for $f$, $f(\xi) = (1-\xi)^{-\alpha}.$ 
	It is well known that series expansion of $f$ is 
	$f(\xi) =  \sum^{\infty}_{j=0} (-1)^{j} {-\alpha \choose j } \xi^j.$
	Then $c_n = (-1)^{n} {-\alpha \choose n },$ 
	where ${-\alpha \choose n } = \frac{\Gamma(1-\alpha)}{\Gamma(1 + n)\Gamma(1 - n -\alpha)}.$

	Finally, by means of basic Gamma function properties, we can verify that ${-\alpha \choose n } \in O(n^{\alpha-1})$, and hence, $\{c_n\}_{n \in \mathbb{N}_0} \in O(n^{\alpha-1})$.  \end{proof}

\subsection*{Acknowledgments} The authors thank Prof. Juli\'an Fern\'andez Bonder and Prof. Ciprian Gal for their valuable comments which helped to improve the manuscript.

%
%

\bibliography{AC_tex}{}

\begin{thebibliography}{10}

\bibitem{ABB}
G.~Acosta, F.~Bersetche, and J.P. Borthagaray.
\newblock A short {FEM} implementation for a 2d homogeneous {D}irichlet problem
  of a fractional {L}aplacian.
\newblock {\em Comp. Math. Appl.}, 74(4):784--816, 2017.

\bibitem{ABB2}
G.~Acosta, F.~M. Bersetche, and J.~P. Borthagaray.
\newblock Finite element approximations for fractional evolution problems.
\newblock {\em Fractional Calculus and Applied Analysis}, 22(3):767--794, 2019.

\bibitem{AcostaBorthagaray}
G.~Acosta and J.~P. Borthagaray.
\newblock A fractional {L}aplace equation: Regularity of solutions and finite
  element approximations.
\newblock {\em SIAM J. Numer. Anal.}, 55(2):472--495, 2017.

\bibitem{glusa}
M.~Ainsworth and C.~Glusa.
\newblock Aspects of an adaptive finite element method for the fractional
  laplacian: a priori and a posteriori error estimates, efficient
  implementation and multigrid solver.
\newblock {\em Computer Methods in Applied Mechanics and Engineering},
  327:4--35, 2017.

\bibitem{ains}
M.~Ainsworth and Z.~Mao.
\newblock Analysis and approximation of a fractional {C}ahn--{H}illiard
  equation.
\newblock {\em SIAM Journal on Numerical Analysis}, 55(4):1689--1718, 2017.

\bibitem{ains2}
M.~Ainsworth and Z.~Mao.
\newblock Well-posedness of the {C}ahn--{H}illiard equation with fractional
  free energy and its fourier galerkin approximation.
\newblock {\em Chaos, Solitons \& Fractals}, 102:264--273, 2017.

\bibitem{akagi}
G.~Akagi, G.~Schimperna, and A.~Segatti.
\newblock Fractional {C}ahn--{H}illiard, {A}llen--{C}ahn and porous medium
  equations.
\newblock {\em Journal of Differential Equations}, 261(6):2935--2985, 2016.

\bibitem{AC_79}
S.~M. Allen and J.~W. Cahn.
\newblock A microscopic theory for antiphase boundary motion and its
  application to antiphase domain coarsening.
\newblock {\em Acta Metallurgica}, 27(6):1085--1095, 1979.

\bibitem{tesis_yo}
F.~M. Bersetche.
\newblock {\em Numerical methods for non-local evolution problems}.
\newblock PhD thesis, Universidad de Buenos Aires, 2019.

\bibitem{BdPM}
Juan~Pablo Borthagaray, Leandro~M Del~Pezzo, and Sandra Mart{\'\i}nez.
\newblock Finite element approximation for the fractional eigenvalue problem.
\newblock {\em Journal of Scientific Computing}, 77(1):308--329, 2018.

\bibitem{gamma}
A.~Braides.
\newblock {\em Gamma-convergence for Beginners}, volume~22.
\newblock Clarendon Press, 2002.

\bibitem{Brezis}
Haim Brezis.
\newblock {\em Functional analysis, {S}obolev spaces and partial differential
  equations}.
\newblock Universitext. Springer, New York, 2011.

\bibitem{CH_58}
J.~W. Cahn and J.~E. Hilliard.
\newblock Free energy of a nonuniform system. i. interfacial free energy.
\newblock {\em The Journal of chemical physics}, 28(2):258--267, 1958.

\bibitem{tesisneto}
P.~Mendes de~Carvalho~Neto.
\newblock {\em Fractional differential equations: a novel study of local and
  global solutions in Banach spaces}.
\newblock PhD thesis, ICMC-USP, 2013.

\bibitem{Hitchhikers}
Eleonora Di~Nezza, Giampiero Palatucci, and Enrico Valdinoci.
\newblock Hitchhiker's guide to the fractional {S}obolev spaces.
\newblock {\em Bull. Sci. Math.}, 136(5):521--573, 2012.

\bibitem{Diethelm}
K.~Diethelm.
\newblock {\em The analysis of fractional differential equations}, volume 2004
  of {\em Lecture Notes in Mathematics}.
\newblock Springer-Verlag, Berlin, 2010.
\newblock An application-oriented exposition using differential operators of
  Caputo type.

\bibitem{elliott}
C.~M. Elliott and S.~Larsson.
\newblock Error estimates with smooth and nonsmooth data for a finite element
  method for the {C}ahn-{H}illiard equation.
\newblock {\em Mathematics of Computation}, 58(198):603--630, 1992.

\bibitem{FR}
X.~Fern{\'a}ndez-Real and X.~Ros-Oton.
\newblock Boundary regularity for the fractional heat equation.
\newblock {\em Revista de la Real Academia de Ciencias Exactas, Fisicas y
  Naturales. Serie A. Matematicas}, 110(1):49--64, 2016.

\bibitem{gal17}
C.~Gal and M.~Warma.
\newblock Fractional in time semilinear parabolic equations and applications.
\newblock {\em HAL Id: hal-01578788}, 2017.

\bibitem{grubb_autovalores}
G.~Grubb.
\newblock Spectral results for mixed problems and fractional elliptic
  operators.
\newblock {\em J. Math. Anal. Appl.}, 421(2):1616--1634, 2015.

\bibitem{HPH}
D.~He, K.~Pan, and H.~Hu.
\newblock A fourth-order maximum principle preserving operator splitting scheme
  for three-dimensional fractional {A}llen-{C}ahn equations.
\newblock {\em arXiv preprint arXiv:1804.07246}, 2018.

\bibitem{HTY}
T.~Hou, T.~Tang, and J.~Yang.
\newblock Numerical analysis of fully discretized crank--nicolson scheme for
  fractional-in-space {A}llen--{C}ahn equations.
\newblock {\em Journal of Scientific Computing}, 72(3):1214--1231, 2017.

\bibitem{BLPZ}
B.~Jin, R.~Lazarov, J.~Pasciak, and Z.~Zhou.
\newblock Error analysis of semidiscrete finite element methods for
  inhomogeneous time-fractional diffusion.
\newblock {\em IMA Journal of Numerical Analysis}, 35(2):561--582, 2014.

\bibitem{Jin}
B.~Jin, R.~Lazarov, and Z.~Zhou.
\newblock Two fully discrete schemes for fractional diffusion and
  diffusion-wave equations with nonsmooth data.
\newblock {\em SIAM J. Sci. Comput.}, 38(1):A146--A170, 2016.

\bibitem{KM}
Michael Karkulik and Jens~Markus Melenk.
\newblock {H} -matrix approximability of inverses of discretizations of the
  fractional {L}aplacian.
\newblock {\em Advances in Computational Mathematics}, 45(5-6):2893--2919,
  2019.

\bibitem{larsson}
S.~Larsson.
\newblock Semilinear parabolic partial differential equations: theory,
  approximation, and application.
\newblock {\em New trends in the mathematical and computer sciences},
  3:153--194, 2006.

\bibitem{LWY}
Z.~Li, H.~Wang, and D.~Yang.
\newblock A space--time fractional phase-field model with tunable sharpness and
  decay behavior and its efficient numerical simulation.
\newblock {\em Journal of Computational Physics}, 347:20--38, 2017.

\bibitem{hongwang}
H.~Liu, A.~Cheng, H.~Wang, and J.~Zhao.
\newblock Time-fractional {A}llen--{C}ahn and {C}ahn--{H}illiard phase-field
  models and their numerical investigation.
\newblock {\em Computers \& Mathematics with Applications}, 76(8):1876--1892,
  2018.

\bibitem{Lub2}
C.~Lubich.
\newblock Convolution quadrature and discretized operational calculus. {I}.
\newblock {\em Numer. Math.}, 52(2):129--145, 1988.

\bibitem{Lub1}
C.~Lubich.
\newblock Convolution quadrature revisited.
\newblock {\em BIT}, 44(3):503--514, 2004.

\bibitem{tesismikkola}
K.~Mikkola.
\newblock {\em Infinite-Dimensional Linear Systems, Optimal Control and
  Algebraic Riccati Equations}.
\newblock PhD thesis, Helsinki University of Technology Institute of
  Mathematics, 2002.

\bibitem{Podlubny}
I.~Podlubny.
\newblock {\em Fractional differential equations}, volume 198 of {\em
  Mathematics in Science and Engineering}.
\newblock Academic Press, Inc., San Diego, CA, 1999.
\newblock An introduction to fractional derivatives, fractional differential
  equations, to methods of their solution and some of their applications.

\bibitem{RosOtonSerra2}
X.~Ros-Oton and J.~Serra.
\newblock Local integration by parts and {P}ohozaev identities for higher order
  fractional {L}aplacians.
\newblock {\em Discrete Contin. Dyn. Syst.}, 35(5):2131--2150, 2015.

\bibitem{SV}
O.~Savin and E.~Valdinoci.
\newblock $\gamma$-convergence for nonlocal phase transitions.
\newblock In {\em Annales de l'Institut Henri Poincare (C) Non Linear
  Analysis}, volume~29, pages 479--500. Elsevier Masson, 2012.

\bibitem{SXKE}
F.~Song, C.~Xu, and G.~E. Karniadakis.
\newblock A fractional phase-field model for two-phase flows with tunable
  sharpness: Algorithms and simulations.
\newblock {\em Computer Methods in Applied Mechanics and Engineering},
  305:376--404, 2016.

\bibitem{stynes}
M.~Stynes.
\newblock Too much regularity may force too much uniqueness.
\newblock {\em Fractional Calculus and Applied Analysis}, 19(6):1554--1562,
  2016.

\end{thebibliography}
\bibliographystyle{plain}
\end{document}